\definecolor{Myblue}{rgb}{0,0,0.6}
\tikzset{
    string/.style={draw=#1, postaction={decorate}, decoration={markings,mark=at position .51 with {\arrow[draw=#1]{>}}}},
    costring/.style={draw=#1, postaction={decorate}, decoration={markings,mark=at position .51 with {\arrow[draw=#1]{<}}}},
    ostring/.style={draw=#1, postaction={decorate}, decoration={markings,mark=at position .47 with {\arrow[draw=#1]{>}}}},
    ustring/.style={draw=#1, postaction={decorate}, decoration={markings,mark=at position .56 with {\arrow[draw=#1]{>}}}},
    oostring/.style={draw=#1, postaction={decorate}, decoration={markings,mark=at position .43 with {\arrow[draw=#1]{>}}}},
    uustring/.style={draw=#1, postaction={decorate}, decoration={markings,mark=at position .59 with {\arrow[draw=#1]{>}}}},
    directed/.style={string=blue!50!black}, 
    odirected/.style={ostring=blue!50!black}, 
    udirected/.style={ustring=blue!50!black}, 
    oodirected/.style={oostring=blue!50!black}, 
    uudirected/.style={uustring=blue!50!black},     
    redirected/.style={costring= blue!50!black},
    redirectedgreen/.style={costring= green!50!black},
    directedgreen/.style={string= green!50!black},
}
\tikzset{-dot-/.style={decoration={
  markings,
  mark=at position 0.5 with {\fill circle (2pt);}},postaction={decorate}}}
\tikzset{
	Fdot/.style={circle, draw, fill, inner sep=0pt}, 
	Odot/.style={circle, draw, inner sep=0.1pt, minimum size=0.1cm}
	}
\newcommand{\raisemath}[1]{\mathpalette{\raisem@th{#1}}}
\newcommand{\raisem@th}[3]{\raisebox{#1}{$#2#3$}}
\newcommand{\C}{\mathds{C}}
\newcommand{\N}{\mathds{N}}
\newcommand{\R}{\mathds{R}}
\newcommand{\Z}{\mathds{Z}}
\def\1{\ifmmode\mathrm{1\!l}\else\mbox{\(\mathrm{1\!l}\)}\fi}
\newcommand{\one}{\mathbbm{1}}
\newcommand{\be}{\begin{equation}}
\newcommand{\ee}{\end{equation}}
\newcommand{\bes}{\begin{equation*}}
\newcommand{\ees}{\end{equation*}}
\newcommand{\id}{\text{id}}
\newcommand{\Hom}{\operatorname{Hom}}
\newcommand{\End}{\operatorname{End}}
\newcommand{\ev}{\operatorname{ev}}
\newcommand{\coev}{\operatorname{coev}}
\def\lra{\longrightarrow}
\def\lmt{\longmapsto}
\DeclareMathOperator{\Res}{Res}
\newcommand*{\longhookrightarrow}{\ensuremath{\lhook\joinrel\relbar\joinrel\rightarrow}}
\newcommand*{\longhookleftarrow}{\ensuremath{\leftarrow\joinrel\relbar\joinrel\rhook}}
\newcommand{\Bord}{\operatorname{Bord}}
\newcommand{\zz}{\mathcal{Z}}
\newcommand{\Obj}{\mathrm{Obj}}
\newcommand{\Vect}{\operatorname{Vect}}
\newcommand{\Vectk}{\operatorname{Vect}_\Bbbk}
\newcommand{\vectk}{\operatorname{vect}_\Bbbk}
\newcommand{\eps}{\varepsilon}
\def\cedille#1{\setbox0=\hbox{#1}\ifdim\ht0=1ex \accent'30 #1%
 \else{\ooalign{\hidewidth\char'30\hidewidth\crcr\unbox0}}\fi}
\newcommand\doi[2]        {\href{http://dx.doi.org/#1}{#2}}
\newcommand\tikzzbox[1]    {#1}%
\theoremstyle{definition}
\newtheorem{definition}{Definition}
\newtheorem{proposition}[definition]{Proposition}
\newtheorem{theorem}[definition]{Theorem}
\newtheorem{lemma}[definition]{Lemma}
\newtheorem{corollary}[definition]{Corollary}
\newtheorem{remark}[definition]{Remark}
\newtheorem{examples}[definition]{Examples}
\numberwithin{equation}{section}
\numberwithin{definition}{section}
\numberwithin{figure}{section}
\newcommand\void[1]{}
\begin{document}

\title{Introductory lectures on \\ topological quantum field theory}

\author{%
\!\!\!\!\!\!\!Nils Carqueville$^*$ \quad
Ingo Runkel$^\#$ \quad
\\[0.5cm]
   \hspace{-1.8cm}  \normalsize{\texttt{\href{mailto:nils.carqueville@univie.ac.at}{nils.carqueville@univie.ac.at}}} \\  %
   \hspace{-1.8cm}  \normalsize{\texttt{\href{mailto:ingo.runkel@uni-hamburg.de}{ingo.runkel@uni-hamburg.de}}} \\
[0.1cm]
   \hspace{-1.2cm} {\normalsize\slshape $^*$Fakult\"at f\"ur Mathematik, Universit\"at Wien, Austria}\\[-0.1cm]
   \hspace{-1.2cm} {\normalsize\slshape $^\#$Fachbereich Mathematik, Universit\"{a}t Hamburg, Germany}\\[-0.1cm]
}

\date{}
\maketitle

\begin{abstract}
These notes offer a lightening introduction to topological quantum field theory in its functorial axiomatisation, assuming no or little prior exposure. 
We lay some emphasis on the connection between the path integral motivation and the definition in terms symmetric monoidal categories, and we highlight the algebraic formulation emerging from a formal generators-and-relations description. 
This allows one to understand (oriented, closed) 1- and 2-dimensional TQFTs in terms of a finite amount of algebraic data, while already the 3-dimensional case needs an infinite amount of data. 
We evade these complications by instead discussing some aspects of 3-dimensional extended TQFTs, and their relation to braided monoidal categories. 
\end{abstract}

\newpage

\tableofcontents

\section{Introduction}
\label{sec:intro}

Topological quantum field theories are a rewarding area of study in mathematical physics and pure mathematics.
They appear in the description of physical systems such as the fractional quantum Hall effect; they are used in topological quantum computing; they are important renormalisation group flow 
invariants of supersymmetric field theories obtained via ``twisting''; they have a clean mathematical axiomatisation; they give invariants of knots and of manifolds; they play vital roles in mirror symmetry and the Geometric Langlands Programme. 

\medskip

In these lecture notes (originally prepared for the ``Advanced School on Topological Quantum Field Theory'' in Warszawa, December 2015) we approach topological quantum field theory by motivating its axioms through path integral considerations. The resulting description in terms of symmetric monoidal functors from bordisms to vector spaces is introduced and some of its immediate consequences are expounded on, still in general dimension $n$.

We then turn to generators-and-relations descriptions of topological quantum field theories, which allow one to cast their study into algebraic language. The resulting algebraic structure is presented in some detail for dimensions one and two. In much less detail we describe the difficulties and resulting modifications of the formalism in dimension three.

\medskip

The entire content 
of these lectures can basically be found in the literature, for example in \cite{bdspv1509.06811,BakalovKirillov, Gawedzki:1999bq,Kockbook}.
Some features which maybe set these notes apart are:
\begin{itemize}
\item[-]
Some effort is spent on comparing the path integral motivation and the symmetric monoidal functor axiomatisation, which initially look quite different (Section \ref{sec:compare}).
\item[-]
Dimensional reduction is briefly discussed; the reduction from three to two dimensions and its relation to the Grothendieck ring is treated as an example (Sections~\ref{subsec:basicprop} and~\ref{subsec:dimred3to2}).
\item[-]
It is explained in some detail what it means for a symmetric monoidal category to be freely generated by some objects and morphisms, subject to some 
relations (Section \ref{sec:gen+rel_def}). 
\end{itemize}

To keep these notes at a reasonable length (or so we hope), we will sometimes skip details or entire proofs -- especially in dimension three. Nonetheless, we have tried to still be accurate or to point out when we gloss over details.
We hope that in this way we can convey some ideas about topological quantum field theory before the reader feels inclined to 
move on to the next interesting topic. 

\bigskip
\noindent
\textbf{Acknowledgements:} 
We are thankful to 
	Manuel B\"arenz, 
	Vanda Farsad, 
	Lorenz Hilfiker,
	Flavio Montiel Montoya, 
	Albert Georg Passegger
and
	Lorant Szegedy 
for comments on a draft of these lecture notes.
The work of N.\,C.~is partially supported by a grant from the Simons Foundation, and by the stand-alone project P\,27513-N27 of the Austrian Science Fund. 
I.\,R.~thanks Nils Carqueville, Piotr Su\l kowski and Rafa\l\ Suszek for organising the ``Advanced School on Topological Quantum Field Theory'' in Warszawa, December 2015, and for their dedicated hospitality during the event. 
We also acknowledge the support of the Faculty of Physics and the Heavy Ion Laboratory at the University of Warsaw, as well as Piotr Su\l kowski's ERC Starting Grant no.~335739 ``Quantum fields and knot homologies''.

\section{Axioms for TQFTs}
\label{sec:closedTQFTs}

In this section we introduce $n$-dimensional (closed, oriented) topological quantum field theories (TQFTs) for any $n\in \Z_+$, and we discuss some of their general properties. 
We start in Section~\ref{subsec:moti} with a list of desired properties which are motivated from heuristic path integral considerations. 
Making the desiderata into defining properties is the idea behind the axiomatic approach presented in Section~\ref{subsec:fun}. 
There we spell out various details of the functorial definition of TQFTs, and in Section~\ref{sec:compare} we make the connection between desired and defining properties precise and explicit. 
Then in Section~\ref{subsec:basicprop} we prove a few general consequences of these properties, and in Section~\ref{sec:compareTQFT} we explain why monoidal natural isomorphisms are a good notion to compare TQFTs.

\subsection{Path integrals as a motivation}
\label{subsec:moti}

While there are only few instances in which the path integral can be made precise, it is doing very well as a source of intuition about quantum field theory. In this subsection we would like to use this intuition to motivate the functorial description of TQFTs due to Atiyah and Segal \cite{Segal:1988,AtiyahTQFT}, given in Section~\ref{subsec:fun}. 
No attempt at mathematical rigour is made in this subsection, and nothing from here is needed in the rest of these notes. 

So, without further ado, here is a path integral:
\be
	\zz = \int D\Phi \, \textrm{e}^{-S[\Phi]} \, .
\ee
The ingredients in this example are as follows.
\begin{itemize}
\item 
$\Phi \colon M \to X$ is a smooth map between two Riemannian manifolds $M,X$. 
For example one can choose $X = \R$ to describe a single scalar field.
\item 
$S[\Phi]$ is called the action functional, typically it would depend on the value of $\Phi$ at each point of $M$ and its first derivative, 
\be 
	S[\Phi] = \int_M \, \mathcal{L}(\Phi,\partial_\mu \Phi)(x) \sqrt{\det(g)}\, \textrm{d}^n x \, ,
\ee 
where $\sqrt{\det(g)}\, \textrm{d}^n x$ is the volume form on $M$ and $\mathcal{L}$ is called Lagrangian density. In the example of a massless free scalar field $\phi\colon M \to \R$ we have 
$\mathcal{L} = \tfrac12 g^{\mu\nu} \, \partial_\mu \phi \, \partial_\nu \phi$.
\item
$\int D\Phi$, the ``integral over all such $\Phi$'' makes no sense.
(Well, for $M$ 1-dimensional, there is the Wiener measure \cite{SimonFI}, but this is really quantum mechanics, and in higher dimensions the measure theoretic approach needs modifications, if it can be salvaged at all.)
\end{itemize}

Next we want to use the path integral to formulate correlation functions. Let $\mathcal{O}_1, \dots , \mathcal{O}_n$ be some observables, that is, functions from the set of field configurations $\{\Phi\colon M \to X\}$ to the complex numbers. 
In the example of a scalar field~$\phi$ these could be $\mathcal{O} \in \{ \phi(x), \,\partial \phi(x), \,\phi(x)^2, \dots \}$. 
Note that these examples of observables only probe the field $\phi$ at a single point~$x$. 
One can also consider observables which are localised on lines, surfaces, etc.; some aspects of this are discussed in \cite{2dTQFT}. 
Wilson loop observables in gauge theory are another important example. 

The correlation function of the observables $\mathcal{O}_1, \dots, \mathcal{O}_n$ is
\be 
\big\langle \mathcal{O}_1 \cdots \mathcal{O}_n \big\rangle_g
~=~
\frac{1}{\zz} \int D\Phi \, \mathcal{O}_1 \cdots \mathcal{O}_n \, e^{-S[\Phi]} \, ,
\ee 
where $g$ indicates the metric on $M$. If this correlation function is independent of~$g$ we have a \textsl{topological quantum field theory}.
Note that metric independence implies diffeomorphism invariance: isometries $f\colon M \to M'$ should leave the path integral invariant (transporting the observables accordingly), but any diffeomorphism $f\colon M \to M'$ can be made into an isometry by modifying the metric on $M$ to the pull-back metric.
	
The most famous example of a topological quantum field theory is Chern-Simons theory \cite{wittenjones}.
We will not study Chern-Simons theory in these notes, but let us at least describe the action functional. We need
\begin{itemize}
\item a 3-dimensional compact oriented manifold $M$, 
\item a compact Lie group $G$, which we will take to be $SU(N)$, 
\item a principal $G$-bundle $P \to M$ (which, as $G=SU(N)$, can always be trivialised, and we choose such a trivialisation),
\item a connection 1-form $A \in \Omega^1(M,\mathfrak{g})$, for $\mathfrak{g}$ the Lie algebra of $G$.
\end{itemize}
Here $A$ plays the r\^ole of the field $\Phi$ from above, and
the action functional reads
\be 
	S[A] ~=~ \gamma \, \int_M
	\mathrm{Tr}\big( A \wedge dA + \tfrac23 A \wedge A \wedge A \big) \, ,
\ee 
where $\gamma \in \C^\times$ is a constant and $\mathrm{Tr}(-)$ is the matrix trace.
The first thing to notice about this action is the absence of any metric. 
Instead, the integrand is already a 3-form on~$M$.
Thus at least the action is certainly topological. 

There is much more to be said about Chern-Simons theory, but we will leave this example here and return to the general discussion of motivating Atiyah's axioms from path integrals.

\medskip

When trying to make enough sense of the path integral description to compute correlation functions, there are two very different approaches one can take:
\begin{itemize}
\item One can try to ``quantise the classical action''. For topological theories this is fiendishly difficult due to a large amount of gauge freedom which needs to be fixed. 
In the Chern-Simons example, this actually re-introduces a metric dependence \cite{wittenjones, Schwarz1978} (which however disappears again in the correlation functions).
\item One can try to fix a collection of properties one would expect of a sensible definition of the path integral, and then try to write down solutions to these, or even to classify all solutions.
\end{itemize}
We will take the second route. The properties we expect for an $n$-dimensional TQFT are as follows (the precise formulation will follow below in Sections~\ref{subsec:fun} and~\ref{sec:compare}):
\begin{enumerate}
\item 
For each oriented $(n-1)$-manifold~$E$ we would like to obtain a Hilbert space $\mathcal{H}_E$ of states on $E$. 

One can think of $\mathcal{H}_E$ as the ``space of functionals on the classical fields on~$E$''. 
(Though actually this space will turn out finite-dimensional in the case of TQFTs.)
\item
For each oriented $n$-manifold $M$ with boundary $\partial M = E$, we expect to obtain a vector $\zz(M) \in \mathcal{H}_E$.

Continuing with our analogy, we have to produce a functional on fields on~$E$ from the manifold~$M$. To do so, we will appeal to the path integral. 
Namely, let $\varphi$ be a field on~$E$. 
We can then consider the path integral over fields $\Phi$ on $M$ which restrict to $\varphi$ on the boundary,
\be
\label{eq:ZMphi}
	\zz(M)(\varphi) ~=~ \int_{\Phi \text{ on $M$ s.t. } \Phi|_E = \varphi}
	\hspace{-6em}
	 D\Phi ~ \textrm{e}^{-S[\Phi]} \hspace{2em} .
\ee
Then, if these were well-defined expressions, the right-hand side would indeed produce a number for each $\varphi$ one plugs in.

\item 
If the $(n-1)$-manifold $E$ is a disjoint union $E = E_1 \sqcup E_2$ we expect $\mathcal{H}_E = \mathcal{H}_{E_1} \otimes \mathcal{H}_{E_2}$.

In our analogy, the space $\mathcal{M}(E_1 \sqcup E_2)$ of maps from $E_1 \sqcup E_2$ to somewhere (the ``classical fields'') equals the Cartesian product $\mathcal{M}(E_1) \times \mathcal{M}(E_2)$. And if these spaces of maps were finite sets, the linear space $\mathcal{F}(E)$ of functionals $\mathcal{M}(E) \to \C $ would satisfy $\mathcal{F}(E_1 \sqcup E_2) = \mathcal{F}(E_1) \otimes \mathcal{F}(E_2)$.

Similarly, if we are given $n$-manifolds $M_1$, $M_2$ with $\partial M_1 = E_1$, $\partial M_2 = E_2$, then we expect $\zz(M_1 \sqcup M_2) = \zz(M_1) \otimes \zz(M_2)$. Indeed, we would like the action to be ``local enough'' so that for $\Phi_i = \Phi|_{M_i}$, $i\in\{1,2\}$, we have $S[\Phi] = S[\Phi_1] + S[\Phi_2]$ and the integral in (ii) decomposes into a product.

\item
For cylinders $M = E \times [0,1]$, the path integral produces an element $\zz(M) \in \mathcal{H}_{E} \otimes \mathcal{H}_{\overline E}$, where $\overline E$ is $E$ with reversed orientation. 
One would expect this element to be nondegenerate in the following sense. Pick two vectors $u,u' \in \mathcal{H}_{\overline E}$. If the contraction of $u$ into the tensor factor $\mathcal{H}_{\overline E}$ of  $\zz(M)$ via the scalar product $\langle-,-\rangle$ produces the same element of $\mathcal{H}_{E}$ as the contraction with~$u'$, then already $u=u'$. We obtain an injective anti-linear map $\mathcal{H}_{\overline E} \to \mathcal{H}_{E}$. 

In the path integral picture, this captures the idea that  on a very thin cylinder one can tell apart any two states placed on one boundary by  choosing an appropriate state to place on the other boundary. For, if this were not the case, the Hilbert spaces do not yet describe the ``physical states'' and one should instead work with an appropriate quotient. 
For example, if in a gauge theory one does not identify gauge equivalent field configurations, the corresponding states would be indistinguishable for arbitrarily thin cylinders and so describe the same physical state.
(Of course in a TQFT, ``very thin'' has no meaning, there is only one cylinder over a given $(n-1)$-manifold up to diffeomorphisms.)

\item
Consider a manifold $M$, possibly with non-empty boundary $E = \partial M$. Now embed a closed $(n-1)$-manifold $U$ into $M$ and cut $M$ open along $U$. This produces a new $n$-manifold $N$ with boundary $\partial N = E \sqcup U \sqcup \overline U$. Let $\{ e_i \}$ be an orthonormal basis of $\mathcal{H}_U$ and let $\overline e_i$ be the preimage of $e_i$ under the anti-linear map from item (iv) above (the existence of the preimage, i.\,e.\ surjectivity, is a property specific to TQFT). We demand
\be 
	\zz(M) = \sum_i \big\langle e_i \otimes \overline{e}_i \,,\, \zz(N) \big\rangle 
	\; \in \mathcal{H}_E \, ,
\ee 
where we think of $\langle e_i \otimes \overline{e}_i , - \rangle$ as a map $\mathcal{H}_E \otimes \mathcal{H}_U \otimes \mathcal{H}_{\overline U} \to \mathcal{H}_E$. 
(In accordance with the finite-dimensionality in (i), for TQFTs, the above sum will be finite.)

In our analogy, this property captures the ``sum over intermediate states''. 
For the path integral, it would mean that the integral \eqref{eq:ZMphi} over all $\Phi$ with $\Phi|_E = \varphi$ can be split into first integrating over all $\Phi$ such that $\Phi|_E = \varphi$ and in addition $\Phi|_U = \psi$ for some $\psi \in \mathcal{M}(U)$, and then integrating the result over $\psi$:
\be 
	\zz(M)(\varphi)
	~=~
	\int_{\psi \text{ on } U} \hspace{-2em} D\psi \hspace{1em}
	\int_{\Phi \text{ on $M$ s.t. } \Phi|_E = \varphi \,,\, \Phi|_U = \psi}
	\hspace{-8.5em} 
	D\Phi ~ \textrm{e}^{-S[\Phi]} \hspace{4.3em} .
\ee 
\end{enumerate}

These are the properties of which a close variant will now be turned into axioms.

\subsection{TQFTs as functors}
\label{subsec:fun}

We regard quantum field theory from an angle where it appears as a way of transporting the geometric and dynamical structure of spacetime into the algebraic description of physical states and observables. 
{}From 
the functorial perspective a QFT is a map 
\be 
\textrm{`geometry'} \lra \textrm{`algebra'}
\ee 
that preserves certain structure. 
It is a difficult and important problem to make this precise for general (and realistic) QFTs. 
For the special case when ``geometry'' basically only means ``topology'', the answer was given by Atiyah and Segal 
\cite{Segal:1988,AtiyahTQFT}: 

\begin{definition}
\label{def:closedTQFT}
An \textsl{$n$-dimensional oriented closed TQFT} (a \textsl{TQFT} for short) is a symmetric monoidal functor 
\be 
\zz\colon \Bord_n \lra \Vectk
\, . 
\ee 
\end{definition}

In this section we will unravel this statement by discussing the two categories $\Bord_n$ and $\Vectk$, highlighting the key properties of the functor~$\zz$, and indicating how it encodes the structure motivated in Section~\ref{subsec:moti}. 
For a neat technical review of symmetric monoidal categories, functors and their natural transformations we recommend \cite{BaezEveryone}, or a textbook such as \cite{EGNO}. 

\medskip

We begin with the category $\Vectk$. 
It is named after its objects which are $\Bbbk$-vector spaces for some field~$\Bbbk$,\footnote{In quantum physics, $\Bbbk = \C$.} and morphisms are $\Bbbk$-linear maps. 

By definition one can compose morphisms in any category, but $\Vectk$ is better than that: it is an example of a \textsl{monoidal} category, which means that objects, too, can be `composed' in the following sense: 
any $U, V \in \Vectk$ can be `multiplied' by forming the tensor product $U \otimes_\Bbbk V \in \Vectk$. 
This is a good product in the sense that it is associative up to natural isomorphism (satisfying the ``pentagon equations'' \cite[Def.\,6]{BaezEveryone} or \cite[Sect.\,2.1]{EGNO}), 
\be\label{eq:Vectass}
(U \otimes_\Bbbk V) \otimes_\Bbbk W 
\cong 
U \otimes_\Bbbk (V \otimes_\Bbbk W)
\, , 
\ee
and $\Bbbk \in \Vectk$ is a unit since
\be 
\Bbbk \otimes_\Bbbk V \cong V \cong V \otimes_\Bbbk \Bbbk
\, . 
\ee 
Better still, since one can also take the tensor product of linear maps, $\otimes_\Bbbk$ is a functor
\be 
\Vectk \times \Vectk \lra \Vectk 
\, . 
\ee 

The monoidal category $\Vectk$ also has a \textsl{symmetric} structure: for all $U, V \in \Vectk$ there are natural isomorphisms 
\be 
\beta_{U,V}: U \otimes_\Bbbk V \lra V \otimes_\Bbbk U
\ee 
(given by $u \otimes v \mapsto v \otimes u$) which are compatible with the isomorphism~\eqref{eq:Vectass} in the sense that they satisfy the ``hexagon equations'' \cite[Def.\,7]{BaezEveryone}, \cite[Sect.\,8.1]{EGNO}
and we have the symmetry property  
\be 
\beta_{U,V} = \beta_{V,U}^{-1} 
\, . 
\ee 

\medskip

The other symmetric monoidal category we need is $\Bord_n$, which is named after its morphisms. 
Objects in $\Bord_n$ are oriented closed $(n-1)$-dimensional real manifolds~$E$ for some fixed $n\in \Z_{\geqslant 1}$. 
We may think of~$E$ as a toy model of a spacial slice of an $n$-dimensional spacetime. 

A morphism $E\to F$ in $\Bord_n$ is an equivalence class of a bordism from~$E$ to~$F$. 
A \textsl{bordism} $E \to F$ is an oriented compact $n$-dimensional manifold~$M$ with boundary, together with smooth maps $\iota_{\textrm{in}}\colon  E \to M \leftarrow F \,\colon\!\! \iota_{\textrm{out}}$ with image in $\partial M$ such that 
\be 
\overline \iota_{\textrm{in}} \sqcup \iota_{\textrm{out}}\colon 
\overline{E} \sqcup F \lra \partial M
\ee 
is an orientation-preserving diffeomorphism, where $\overline{E}$ denotes~$E$ with the opposite orientation. 
Two bordisms $(M, \iota_{\textrm{in}}, \iota_{\textrm{out}}), (M', \iota'_{\textrm{in}}, \iota'_{\textrm{out}})\colon  E \to F$ are \textsl{equivalent} if there exists an orientation-preserving diffeomorphism $\psi\colon M \to M'$ such that
\be 
\begin{tikzpicture}[
			     baseline=(current bounding box.base), 
			     >=stealth,
			     descr/.style={fill=white,inner sep=3.5pt}, 
			     normal line/.style={->}
			     ] 
\matrix (m) [matrix of math nodes, row sep=1.5em, column sep=3.0em, text height=1.5ex, text depth=0.1ex] {%
& M &
\\
E & & F
\\
& M' &
\\
};
\path[font=\footnotesize] (m-2-1) edge[->] node[above] {$ \iota_{\textrm{in}} $} (m-1-2);
\path[font=\footnotesize] (m-2-1) edge[->] node[below] {$ \iota'_{\textrm{in}} $} (m-3-2);
\path[font=\footnotesize] (m-2-3) edge[->] node[above] {$ \iota_{\textrm{out}} $} (m-1-2);
\path[font=\footnotesize] (m-2-3) edge[->] node[below] {$ \iota'_{\textrm{out}} $} (m-3-2);
\path[font=\footnotesize] (m-1-2) edge[->] node[auto] {$ \psi $} (m-3-2);
\end{tikzpicture}
\ee 
commutes. 
This is how the smooth geometric structure is discarded in $\Bord_n$. 

Composition of morphisms $M_1\colon  E \to F$ and $M_2\colon  F \to G$ in $\Bord_n$ is given by ``gluing $M_1$ and $M_2$ along~$F$''.

\begin{remark}\label{rem:luck-with-gluing}
There is a subtlety in this definition of composition. The gluing along~$F$ fixes $M_2  \sqcup_F M_1$ a priori only as a topological space,  but we need a smooth structure as well. There are two ways to produce such a structure. One is to work with ``$(n-1)$-manifolds with collars'', so that the objects of the bordism category are of the form $E \times (-1,1)$, and the boundary parametrisations are now defined on $E \times (-1,0]$ and $E \times [0,1)$, respectively. 

The other way is to note that we are in luck in that even without the collars there are such smooth structures. 
Namely, we can appeal to the nontrivial fact that the colimit $M_2 \sqcup_F M_1$ exists in topological spaces and can be used to construct a smooth structure on \mbox{$M_2 \sqcup_F M_1$} which is unique up to (non-unique) diffeomorphism, see e.\,g.~\cite[Thm.\,1.3.12]{Kockbook}. 
Since morphisms in $\Bord_n$ are diffeomorphism classes of smooth $n$-manifolds, this is good enough for us. However, if one wants to discuss ``extended TQFTs'', as we will briefly do in Section~\ref{sec:extendedTQFTs}, ``unique up to diffeomorphism'' is no longer good enough and one has to work with collars.
\end{remark}

We note that in particular every diffeomorphism $\phi\colon E_1 \to E_2$ gives rise to an isomorphism $E_1 \cong E_2$ in $\Bord_n$ via the \textsl{cylinder construction}: 
the associated bordism is the cylinder
\be 
\label{eq:cylinderconstruction}
E_1 \cong E_1 \times \{ 0 \} \longhookrightarrow E_1 \times  [0,1] \longhookleftarrow
E_1\times \{ 1 \} \stackrel{\phi}{\cong} E_2 \times \{ 1 \} \cong E_2
\, .
\ee 

The bordism category $\Bord_n$ has a symmetric monoidal structure, too, with the tensor product given by disjoint union. 
The unit object is the empty set~$\emptyset$ viewed as an $(n-1)$-dimensional manifold. 
We obviously have $\emptyset \sqcup E = E = E \sqcup \emptyset$ for every $E \in \Bord_n$, and by definition (as a universal coproduct) taking disjoint unions is associative. 
Finally, the canonical diffeomorphism $E \sqcup F \to F \sqcup E$ induces (via the cylinder construction~\eqref{eq:cylinderconstruction}) a symmetric braiding 
\be\label{eq:Bordbraid}
\beta_{E,F}\colon E \sqcup F \lra F \sqcup E
\ee
on $\Bord_n$. 

\begin{remark}
\label{rem:manyBords}
There are several variants of bordism categories (and hence of functorial QFTs). 
Details and complexity may vary significantly, but the main distinguishing feature is the type of geometric or combinatorial 
structure one considers. 
For example, bordisms can come equipped with a metric, with a conformal structure,  with a spin structure,  with a framing, with boundaries, with embedded submanifolds, with homotopy classes of maps into some classifying space, etc. 
Correspondingly, functorial QFTs have many names (conformal QFT, spin / framed / open-closed / defect / homotopy TQFT, etc.). 
\end{remark}

TQFTs defined on compact oriented bordisms whose entire boundary is parametrised by the source and target objects are called ``oriented closed TQFTs''. As this is the only type of TQFT we will look at (up to Section \ref{sec:extendedTQFTs} anyway), we will just say ``TQFT''.
	
According to Definition \ref{def:closedTQFT}, a TQFT is a symmetric monoidal functor $\zz\colon \Bord_n \to \Vectk$, and we will now unpack this definition, following the same numbering (i)--(v) as in the path integral motivation at the end of Section \ref{subsec:moti}.
\begin{enumerate}
\item
The first part of data contained in a functor is that it assigns objects to objects. For $\zz$ this means that every $(n-1)$-dimensional manifold $E \in \Bord_n$ is assigned a $\Bbbk$-vector space $\zz(E)$. 

Comparing to the path integral motivation, $\zz(E)$ corresponds to the state space $\mathcal{H}_E$. 
In the present setting, however, we can work over an arbitrary field $\Bbbk$, and we do not need a scalar product on $\zz(E)$. 
As we will see in Section~\ref{subsec:basicprop} there still is a nondegenerate pairing between $\zz(E)$ and $\zz(\overline E)$.
\item
The second part of data contained in a functor is that it assigns morphisms to morphisms. That is, $\zz$ produces for every bordism $M\colon E \to E'$ a linear map 
\be 
\zz(M)\colon \zz(E) \lra \zz(E')
\ee 
which we can think of as describing the ``evolution along~$M$''. 

To relate this to the path integral motivation, think of an $n$-manifold~$M$ with boundary $\partial M = E$ as a bordism $\emptyset \xrightarrow{M} E$. 
We will learn in the next point that $\zz(\emptyset) = \Bbbk$, so that $\zz(M)\colon \Bbbk \to \zz(E)$, which (by taking the image of $1 \in \Bbbk$) is the same as giving an element in $\zz(E)$.

It may now seem that since in the path integral motivation we only consider the collection of elements in $\zz(E)$ coming from bordisms of the form $\emptyset \to E$, we have forgotten about all the linear maps $\zz(F) \to \zz(E)$ coming from bordisms $F \to E$, but this is actually not so, see Lemma~\ref{lem:sym-mon-fun_vs_path-int} below.

\item
A TQFT~$\zz$ also respects the symmetric monoidal structures of the source and target categories: 
it comes with isomorphisms 
\be 
\zz(\emptyset) \cong \Bbbk 
\, , \quad 
\zz( E \sqcup F ) \cong  \zz(E) \otimes_\Bbbk \zz(F) 
\ee 
which are compatible with associativity of the tensor products and with the braidings~$\beta$, cf.~\cite[Def.\,9\,\&\,10]{BaezEveryone}, \cite[Sect.\,2.4\,\&\,8.1]{EGNO}.
We may (and will) assume that $\zz(\emptyset)=\Bbbk$ (one may always replace $\zz$ by a monoidally equivalent functor with this property).
\item
Since $\zz$ is a functor, it maps unit morphisms to unit morphisms. That is, for an oriented $(n-1)$-manifold~$E$, we have $\zz(E \times[0,1]) = \id_{\zz(E)}$.

This looks again somewhat different from point (iv) in Section~\ref{subsec:moti}, where we could not speak about linear maps as we only obtained vectors in state spaces. Lemma~\ref{lem:sym-mon-fun_vs_path-int} will illustrate how these two descriptions of nondegeneracy are related.
\item
The most crucial property of a functor is that it is compatible with composition. For $\zz$ this means that
gluing of manifolds translates into composition of linear maps, 
\be 
\zz ( M_2 \sqcup_F M_1) = \zz(M_2) \circ \zz(M_1) \, . 
\ee 

Here, there is a qualitative difference to the description in the path integral motivation: the gluing property of functors always composes two \textsl{disjoint} $n$-manifolds, but the cutting property in the path integral motivation does not necessarily produce two disjoint pieces. It is thus in general not possible to cut a bordism $M$ along any embedded $(n-1)$-manifold into $M'$ and then use composition of bordisms to glue $M'$ back together to obtain $M$. Luckily, also this more severe looking difference is only apparent, as we will discuss in Section \ref{sec:compare}. 

Once this point is settled, the above composition property of functors translates into the sum over intermediates states in the path integral motivation.
\end{enumerate}

\subsection{Comparison to the path integral motivation}
\label{sec:compare}

We now address the apparent differences between (i)--(v) in Sections~\ref{subsec:moti} and~\ref{subsec:fun}. 
Let $\mathcal{Y}$ be the following collection of data:
\begin{itemize}
\item to each object $E \in \Bord_n$ a $\Bbbk$-vector space $\mathcal{Y}(E)$, such that $\mathcal{Y}(\emptyset) = \Bbbk$,
\item to each bordism $M\colon \emptyset \to E$ a linear map $\mathcal{Y}(M)\colon \Bbbk \to \mathcal{Y}(E)$,
\item isomorphisms $\mathcal{Y}(E \sqcup F) \to \mathcal{Y}(E) \otimes \mathcal{Y}(F)$ for all objects $E,F \in \Bord_n$. 
\end{itemize}
Note that this is the data motivated from the path integral in points (i)--(iii) in Section~\ref{subsec:moti}.

\begin{lemma}\label{lem:sym-mon-fun_vs_path-int}
The following are equivalent.
\begin{enumerate}
\item $\mathcal{Y}$ extends to a symmetric monoidal functor $\Bord_n \to \Vectk$.
\item $\mathcal{Y}$ satisfies:
\begin{enumerate}
\item 
The element $\mathcal{Y}( E \times [0,1] ) \in \mathcal{Y}(E \sqcup \overline E) \cong \mathcal{Y}(E) \otimes \mathcal{Y}(\overline E)$ is a nondegenerate copairing for all~$E$. 
Thus there is a (unique) dual pairing, which we denote by
$$
	d_E\colon \mathcal{Y}(\overline E) \otimes \mathcal{Y}(E)
	\lra \Bbbk 
	\, . 
$$ 
\item Let $U$ be a closed oriented $(n-1)$-manifold embedded in~$M$. Let~$M'$ be obtained by cutting~$M$ along $U$, i.\,e.\ $M'\colon \emptyset \to E \sqcup \overline U \sqcup U$. Then
\begin{align*}
\mathcal{Y}(M) ~=~ \Big[ \Bbbk &\xrightarrow{\mathcal{Y}(M')} \mathcal{Y}( E \sqcup \overline U \sqcup U)
\stackrel{\sim}{\lra} \mathcal{Y}( E ) \otimes \mathcal{Y}(\overline U) \otimes \mathcal{Y}(U)
\nonumber \\ &
\xrightarrow{1_{\mathcal Y(E)} \otimes d_U} \mathcal{Y}(E) \Big]
 \, .
\end{align*}
\item For bordisms $M\colon \emptyset \to E$ and $M'\colon \emptyset \to F$ we have
$$ 
	\mathcal{Y}(M) \otimes \mathcal{Y}(M')
	~=~
	\Big[ 
	\Bbbk \xrightarrow{\mathcal{Y}(M \sqcup M')}
	\mathcal{Y}(E \sqcup F)
	\stackrel{\sim}{\lra}
	\mathcal{Y}(E) \otimes \mathcal{Y}(F)
	\Big] \, .
$$ 
\item ``$\mathcal{Y}$ is compatible with the symmetric braiding and the coherence isomorphisms of $\Bord_n$ and $\Vectk$''
(see Appendix \ref{app:alt-sym-mon-fun} for the precise formulation of this condition in a more general setting).
\end{enumerate}
\end{enumerate}
If $\mathcal{Y}$ satisfies the conditions in (ii), the extension to a symmetric monoidal functor in (i) is unique.
\end{lemma}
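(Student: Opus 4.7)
The plan is to handle the two implications separately, using the ``folding trick'' to define the extension in direction (ii) $\Rightarrow$ (i), and then deduce uniqueness from the explicit construction.

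For (i) $\Rightarrow$ (ii), most conditions are immediate: (c) is the monoidal coherence of $\zz$, (d) is the symmetric monoidal compatibility by definition, and (b) is obtained by writing $M$ as the composite of $M'$ with the cap-type bordism $\ev_U \colon U \sqcup \overline U \to \emptyset$ along the $U \sqcup \overline U$ factor and applying functoriality. Condition (a) is the only mildly non-trivial item: the cylinder, viewed as a bordism $\emptyset \to E \sqcup \overline E$ (the coevaluation) and as $E \sqcup \overline E \to \emptyset$ (the evaluation), satisfies the zig-zag / snake identities in $\Bord_n$ (bending a cylinder and re-gluing yields a cylinder up to diffeomorphism), and these identities transport under $\zz$ to the statement that $\zz(E \times [0,1])$ is a nondegenerate copairing with dual pairing $d_E := \zz(\ev_E)$.

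For (ii) $\Rightarrow$ (i), given a general bordism $N \colon E \to F$, define $\widetilde N \colon \emptyset \to F \sqcup \overline E$ by reinterpreting the incoming boundary as outgoing with reversed orientation, and set
\[
\zz(N) \;:=\; \bigl[\, \zz(E) \xrightarrow{\;\mathcal Y(\widetilde N) \otimes 1\;} \zz(F) \otimes \zz(\overline E) \otimes \zz(E) \xrightarrow{\;1 \otimes d_E\;} \zz(F) \,\bigr],
\]
where the monoidal coherence isomorphisms from the third bullet of $\mathcal Y$'s data have been suppressed. One then checks: (1) preservation of identities, i.\,e.\ $\zz(E \times [0,1]) = \id$ when the cylinder is read as $E \to E$, which is precisely the snake identity for the duality $(\zz(E \times [0,1]), d_E)$ coming from (a); (2) preservation of composition, which is the heart of the argument (see below); (3) the symmetric monoidal structure, which is forced by (c), (d) and the coherence data.

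The main obstacle is the composition check. For $N_1 \colon E \to F$ and $N_2 \colon F \to G$ with composite $N := N_2 \circ N_1$, cut $\widetilde N \colon \emptyset \to G \sqcup \overline E$ along the internal copy of $F$ lying along the gluing seam. By (b), $\mathcal Y(\widetilde N)$ is recovered from $\mathcal Y(M')$ by contracting with $d_F$, where $M' \colon \emptyset \to G \sqcup \overline E \sqcup \overline F \sqcup F$ is the cut manifold. But $M'$ is diffeomorphic to $\widetilde{N_2} \sqcup \widetilde{N_1}$ after reordering factors, so (c) together with (d) yields $\mathcal Y(M') = \mathcal Y(\widetilde{N_2}) \otimes \mathcal Y(\widetilde{N_1})$ modulo braidings. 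Substituting back into the definition of $\zz(N)$ and collecting the two contractions $d_E$ and $d_F$ in the correct order produces exactly $\zz(N_2) \circ \zz(N_1)$; the only real labour is the careful bookkeeping of the associators and symmetry isomorphisms. Uniqueness is immediate: in any symmetric monoidal extension the defining formula above is forced, because the folding identity $N = (1_F \sqcup \ev_E) \circ (\widetilde N \sqcup 1_E)$ already holds in $\Bord_n$ (a snake manoeuvre), so applying $\zz$ to both sides pins down the value on $N$.
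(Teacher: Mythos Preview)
Your proposal is correct and follows essentially the same approach as the paper: the paper also remarks that (i)$\Rightarrow$(ii) is obvious and for (ii)$\Rightarrow$(i) writes down exactly your folding ansatz $\zz(N) = (\id \otimes d_E) \circ (\mathcal{Y}(\widetilde N) \otimes \id)$, then (in Appendix~\ref{app:alt-sym-mon-fun}) verifies identities via the zig-zag relations and composition via the gluing axiom~(b) combined with~(c) and~(d). The only difference is cosmetic: the paper carries out the detailed verification in the abstract setting of an arbitrary symmetric monoidal category~$\mathcal C$ with left duals in place of $\Bord_n$, which buys a bit more generality but is otherwise the same argument you sketch.
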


Comparing to (i)--(v) in Section \ref{subsec:moti}, we see that a) amounts to the nondegeneracy condition (iv), b) is the sum over intermediate states in (v), c) is the condition for disjoint unions of $n$-manifolds in (iii), and d) does not appear as in Section~\ref{subsec:moti} we treated coherences as equalities and did not worry about permuting tensor factors.

\medskip

The above lemma is actually a general statement about symmetric monoidal functors between any two symmetric monoidal categories. We will give the detailed formulation and a sketch of the proof of this statement in Appendix \ref{app:alt-sym-mon-fun}.
For now we only remark that the direction (i)$\,\Rightarrow\,$(ii) is obvious, and that for the other direction the ansatz for $\zz$ is, for objects $E,F \in \Bord_n$ and a bordism $M\colon E \to F$,
\begin{align}
	\zz(E) ~=~& \mathcal{Y}(E) \, ,
	\nonumber \\
	\zz(M) 
	~=~&
	\Big[\,
	\mathcal{Y}(E)
	\xrightarrow{\mathcal{Y}(\widetilde M) \otimes \id}
	\mathcal{Y}(F \sqcup \overline E) \otimes \mathcal{Y}(E)
	\nonumber
	\\  &
	\qquad \stackrel{\sim}{\lra}
	\mathcal{Y}(F) \otimes \mathcal{Y}(\overline E) \otimes \mathcal{Y}(E)
	\xrightarrow{\id \otimes d_E}
	\mathcal{Y}(F)
	\,\Big] \, ,
\end{align}
where $\widetilde M\colon \emptyset \to F \sqcup \overline E$ is $M$, but with ingoing boundary $E$ interpreted as outgoing boundary.

\begin{remark}
The above lemma thus provides an alternative definition of a TQFT. In fact, historically it is the other way around, as the original formulation of the TQFT axioms in \cite{AtiyahTQFT} does not use symmetric monoidal functors, but instead is given in terms of the data and conditions for $\mathcal{Y}$ (except that in condition b), in \cite{AtiyahTQFT} it is assumed that  the bordism is cut into disjoint pieces).
\end{remark}

\subsection[Basic properties of $n$-dimensional TQFTs]{Basic properties of $\boldsymbol{n}$-dimensional TQFTs}
\label{subsec:basicprop}

TQFTs become rapidly more
difficult with increasing dimension. 
Nonetheless, there are some basic properties which one can easily deduce from the definition and which hold for any dimension~$n$. 
In this section we will list some of them.

\medskip

We will start with the most crucial property, namely that all state spaces of a TQFT as in Definition \ref{def:closedTQFT} are necessarily finite-dimensional (recall that the target category was that of all $\Bbbk$-vector spaces, not only finite-dimensional ones).

\begin{proposition}
\label{prop:findim}
Let $\zz\colon \Bord_n \to \Vectk$ be a TQFT. 
Then $\zz(E)$ is finite-dimensional for every $E\in \Bord_n$, and $\zz(\overline E) \cong \zz(E)^*$.
\end{proposition}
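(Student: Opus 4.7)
The plan is to exhibit $\zz(\overline E)$ as a dual object to $\zz(E)$ in the symmetric monoidal category $\Vectk$, and then invoke the standard fact that an object of $\Vectk$ is dualisable if and only if it is finite-dimensional, in which case its dual is canonically $V^*$.

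First I would reinterpret the cylinder $E\times [0,1]$ as two auxiliary bordisms by changing the boundary parametrisations:
\begin{itemize}
\item[-] $\tcoev_E\colon \emptyset \to E \sqcup \overline E$, with both ends outgoing, obtained from $E \times [0,1]$ by taking $\iota_{\textrm{out}}$ to parametrise $E\times\{0\}$ (with opposite orientation, hence $\overline E$) together with $E\times\{1\}$;
\item[-] $\tev_E\colon \overline E \sqcup E \to \emptyset$, with both ends incoming, obtained symmetrically.
\end{itemize}
Applying $\zz$ (and the monoidal coherence isomorphism $\zz(E\sqcup\overline E)\cong \zz(E)\otimes\zz(\overline E)$) yields linear maps
\be
\coev_E\colon \Bbbk \lra \zz(E)\otimes \zz(\overline E)\, ,\qquad
\ev_E\colon \zz(\overline E)\otimes \zz(E) \lra \Bbbk \, .
\ee

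The key geometric input is the \emph{zigzag diffeomorphism}: gluing $\tcoev_E$ and $\tev_E$ along the middle $E$ (resp.~$\overline E$) factor, that is, forming the composite bordisms
\be
E \xrightarrow{\;\tcoev_E\,\sqcup\,\id_E\;} E \sqcup \overline E \sqcup E \xrightarrow{\;\id_E\,\sqcup\,\tev_E\;} E
\ee
and its mirror on $\overline E$, produces $n$-manifolds that are diffeomorphic rel boundary to the cylinder on $E$ (resp.~$\overline E$). Hence in $\Bord_n$ both composites equal the identity. This is the only place where genuine geometry enters; verifying the diffeomorphism is the main technical point, but it is a routine bending/straightening of an S-shape into a straight cylinder.

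Now applying the symmetric monoidal functor $\zz$, functoriality converts the gluings to compositions and the disjoint unions to tensor products, so one obtains
\be
(\id_{\zz(E)}\otimes \ev_E)\circ(\coev_E\otimes \id_{\zz(E)}) = \id_{\zz(E)}
\ee
and the analogous snake identity for $\zz(\overline E)$. These are precisely the axioms saying that $\zz(\overline E)$ is a left dual of $\zz(E)$ in $\Vectk$ with pairing $\ev_E$ and copairing $\coev_E$.

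Finally I would invoke the standard fact about $\Vectk$: a vector space $V$ admits a dual object in the monoidal sense if and only if $\dim_\Bbbk V < \infty$, in which case the dual is uniquely (up to canonical isomorphism) $V^*$ with its usual evaluation map. Applying this to $V=\zz(E)$ yields simultaneously $\dim_\Bbbk \zz(E) < \infty$ and a canonical isomorphism $\zz(\overline E)\cong \zz(E)^*$, completing the proof. The only nontrivial step is the zigzag diffeomorphism; everything else is formal bookkeeping with the symmetric monoidal structure.
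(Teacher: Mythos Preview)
Your proof is correct and follows essentially the same approach as the paper: interpret the cylinder as evaluation and coevaluation bordisms, use the S-shape diffeomorphism to obtain the zigzag identities, and conclude that $\zz(E)$ and $\zz(\overline E)$ are dual objects. The only cosmetic difference is that the paper unpacks the final step explicitly (writing the copairing as a finite sum $\sum_i u_i\otimes v_i$ and using one zigzag to show the $v_i$ span), whereas you quote the characterisation of dualisable objects in $\Vectk$ as a black box.
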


\begin{proof}
The origin of this finiteness property is duality. 
To see this, let us set $U := \zz(E)$ and $V :=  \zz(\overline E)$ for the vector space associated to the manifold with opposite orientation. 
Next we consider the cylinder $E \times [0,1]$, but viewed as a morphism
\be 
\label{eq:bendcyl1}
\begin{tikzpicture}[very thick,scale=1.0,color=blue!50!black, baseline=0.14cm]
\coordinate (p1) at (-0.55,0);
\coordinate (p2) at (-0.2,0);
\coordinate (p3) at (0.2,0);
\coordinate (p4) at (0.55,0);
%
\fill [orange!23] 
(p1) -- (p2)
-- (p2) .. controls +(0,0.35) and +(0,0.35) ..  (p3)
-- (p3) -- (p4)
-- (p4) .. controls +(0,0.9) and +(0,0.9) ..  (p1)
;
\fill [orange!38] 
(p1) .. controls +(0,-0.15) and +(0,-0.15) ..  (p2)
-- (p2) .. controls +(0,0.15) and +(0,0.15) ..  (p1)
;
\fill [orange!38] 
(p3) .. controls +(0,-0.15) and +(0,-0.15) ..  (p4)
-- (p4) .. controls +(0,0.15) and +(0,0.15) ..  (p3)
;
%
%
\draw[thick, black] (p2) .. controls +(0,0.35) and +(0,0.35) ..  (p3); 
\draw[thick, black] (p4) .. controls +(0,0.9) and +(0,0.9) ..  (p1);
\draw[very thick, red!80!black] (p1) .. controls +(0,0.15) and +(0,0.15) ..  (p2); 
\draw[very thick, red!80!black] (p1) .. controls +(0,-0.15) and +(0,-0.15) ..  (p2); 
\draw[very thick, red!80!black] (p3) .. controls +(0,0.15) and +(0,0.15) ..  (p4); 
\draw[very thick, red!80!black] (p3) .. controls +(0,-0.15) and +(0,-0.15) ..  (p4); 
%
\fill[color=black] (-0.375,0) circle (0pt) node[below] {{\footnotesize$\overline E$}};
\fill[color=black] (0.375,0) circle (0pt) node[below] {{\footnotesize$E\vphantom{\overline E}$}};
\end{tikzpicture}
: 
\overline E \sqcup E \lra \emptyset \, . 
\ee 
Similarly, we can view the cylinder as a map 
\be\label{eq:bendcyl2}
\begin{tikzpicture}[very thick,scale=1.0,color=blue!50!black, baseline=-0.22cm, rotate=180]
\coordinate (p1) at (-0.55,0);
\coordinate (p2) at (-0.2,0);
\coordinate (p3) at (0.2,0);
\coordinate (p4) at (0.55,0);
%
\fill [orange!23] 
(p1) -- (p2)
-- (p2) .. controls +(0,0.35) and +(0,0.35) ..  (p3)
-- (p3) -- (p4)
-- (p4) .. controls +(0,0.9) and +(0,0.9) ..  (p1)
;
\fill [orange!38] 
(p1) .. controls +(0,-0.15) and +(0,-0.15) ..  (p2)
-- (p2) .. controls +(0,0.15) and +(0,0.15) ..  (p1)
;
\fill [orange!38] 
(p3) .. controls +(0,-0.15) and +(0,-0.15) ..  (p4)
-- (p4) .. controls +(0,0.15) and +(0,0.15) ..  (p3)
;
%
%
\draw[thick, black] (p2) .. controls +(0,0.35) and +(0,0.35) ..  (p3); 
\draw[thick, black] (p4) .. controls +(0,0.9) and +(0,0.9) ..  (p1);
\draw[very thick, red!80!black] (p1) .. controls +(0,0.15) and +(0,0.15) ..  (p2); 
\draw[very thick, red!80!black] (p1) .. controls +(0,-0.15) and +(0,-0.15) ..  (p2); 
\draw[very thick, red!80!black] (p3) .. controls +(0,0.15) and +(0,0.15) ..  (p4); 
\draw[very thick, red!80!black] (p3) .. controls +(0,-0.15) and +(0,-0.15) ..  (p4); 
%
\fill[color=black] (-0.375,-0.68) circle (0pt) node[below] {{\footnotesize$\overline E$}};
\fill[color=black] (0.375,-0.68) circle (0pt) node[below] {{\footnotesize$E\vphantom{\overline E}$}};
\end{tikzpicture}
: \emptyset \lra E \sqcup \overline E
\, . 
\ee
By diffeomorphism invariance these two maps are related to the identity $1_E$ 
as follows: 
\be\label{eq:tubeZorro}
\begin{tikzpicture}[thick, scale=1.0, color=black, baseline=0.55cm]
\fill [orange!23] 
(-0.175,-0.2) .. controls +(0,-0.1) and +(0,-0.1) .. (0.2,-0.2)
-- (0.2,-0.2) -- (0.2,0.75)
-- (0.2,0.75).. controls +(0,0.2) and +(0,0.2) .. (0.5,0.75)
-- (0.5,0.75) -- (0.5,0.2)
-- (0.5,0.2).. controls +(0,-0.45) and +(0,-0.45) .. (1.4,0.2)
-- (1.4,0.2) -- (1.4,1.5)
-- (1.4,1.5) .. controls +(0,0.1) and +(0,0.1) .. (1,1.5)
-- (1,1.5) -- (1,0.5)
-- (1,0.5).. controls +(0,-0.2) and +(0,-0.2) .. (0.75,0.5)
-- (0.75,0.5) -- (0.75,1)
-- (0.75,1).. controls +(0,0.45) and +(0,0.45) .. (-0.175,1)
-- (-0.175,1) -- (-0.175,-0.2)
;
%
\fill [orange!38] 
(-0.175,-0.2) .. controls +(0,0.1) and +(0,0.1) ..  (0.2,-0.2) -- 
(0.2,-0.2) .. controls +(0,-0.1) and +(0,-0.1) ..  (-0.175,-0.2); 
\draw (0.2,-0.2) -- (0.2,0.75);
\draw (0.2,0.75).. controls +(0,0.2) and +(0,0.2) .. (0.5,0.75);
\draw (0.5,0.75) -- (0.5,0.2);
\draw (0.5,0.2).. controls +(0,-0.45) and +(0,-0.45) .. (1.4,0.2);
\draw (1.4,0.2) -- (1.4,1.5);
\draw (1,1.5) -- (1,0.5);
\draw (1,0.5).. controls +(0,-0.2) and +(0,-0.2) .. (0.75,0.5);
\draw (0.75,0.5) -- (0.75,1);
\draw (0.75,1).. controls +(0,0.45) and +(0,0.45) .. (-0.175,1);
\draw (-0.175,1) -- (-0.175,-0.2);
\draw[very thick, color=red!80!black] (-0.175,-0.2) .. controls +(0,0.1) and +(0,0.1) ..  (0.2,-0.2); 
\draw[very thick, color=red!80!black] (-0.175,-0.2) .. controls +(0,-0.1) and +(0,-0.1) ..  (0.2,-0.2); 
\draw[very thick, color=red!80!black] (1.4,1.5) .. controls +(0,0.1) and +(0,0.1) ..  (1,1.5); 
\draw[very thick, color=red!80!black, opacity=0.2] (1.4,1.5) .. controls +(0,-0.1) and +(0,-0.1) ..  (1,1.5); 
\end{tikzpicture}
= 
\begin{tikzpicture}[thick, scale=1.0, color=black, baseline=0.55cm]
%
\fill [orange!23] 
(0,-0.2) -- (0.35,-0.2)
-- (0.35,-0.2) -- (0.35,1.5)
-- (0.35,1.5) .. controls +(0,0.1) and +(0,0.1) .. (0,1.5)
-- (0,1.5) -- (0,-0.2)
;
\fill [orange!38] 
(0,-0.2) .. controls +(0,0.1) and +(0,0.1) ..  (0.35,-0.2) -- 
(0.35,-0.2) .. controls +(0,-0.1) and +(0,-0.1) ..  (0,-0.2); 
\draw (0.35,-0.2) -- (0.35,1.5);
\draw (0,1.5) -- (0,-0.2);
\draw[very thick, color=red!80!black] (0,-0.2) .. controls +(0,0.1) and +(0,0.1) ..  (0.35,-0.2); 
\draw[very thick, color=red!80!black] (0,-0.2) .. controls +(0,-0.1) and +(0,-0.1) ..  (0.35,-0.2); 
\draw[very thick, color=red!80!black] (0.35,1.5) .. controls +(0,0.1) and +(0,0.1) ..  (0,1.5); 
\draw[very thick, color=red!80!black, opacity=0.2] (0.35,1.5) .. controls +(0,-0.1) and +(0,-0.1) ..  (0,1.5); 
\end{tikzpicture}
\ee
Applying~$\zz$, we obtain a pairing $\langle-,-\rangle := \zz(
\begin{tikzpicture}[very thick,scale=0.45,color=blue!50!black, baseline=0.01cm]
\coordinate (p1) at (-0.55,0);
\coordinate (p2) at (-0.2,0);
\coordinate (p3) at (0.2,0);
\coordinate (p4) at (0.55,0);
%
\fill [orange!23] 
(p1) -- (p2)
-- (p2) .. controls +(0,0.35) and +(0,0.35) ..  (p3)
-- (p3) -- (p4)
-- (p4) .. controls +(0,0.9) and +(0,0.9) ..  (p1)
;
\fill [orange!38] 
(p1) .. controls +(0,-0.15) and +(0,-0.15) ..  (p2)
-- (p2) .. controls +(0,0.15) and +(0,0.15) ..  (p1)
;
\fill [orange!38] 
(p3) .. controls +(0,-0.15) and +(0,-0.15) ..  (p4)
-- (p4) .. controls +(0,0.15) and +(0,0.15) ..  (p3)
;
%
%
\draw[thick, black] (p2) .. controls +(0,0.35) and +(0,0.35) ..  (p3); 
\draw[thick, black] (p4) .. controls +(0,0.9) and +(0,0.9) ..  (p1);
\draw[very thick, red!80!black] (p1) .. controls +(0,0.15) and +(0,0.15) ..  (p2); 
\draw[very thick, red!80!black] (p1) .. controls +(0,-0.15) and +(0,-0.15) ..  (p2); 
\draw[very thick, red!80!black] (p3) .. controls +(0,0.15) and +(0,0.15) ..  (p4); 
\draw[very thick, red!80!black] (p3) .. controls +(0,-0.15) and +(0,-0.15) ..  (p4); 
\end{tikzpicture}
)\colon V \otimes_\Bbbk U \to \Bbbk$, a copairing $\gamma := \zz(
\begin{tikzpicture}[very thick,scale=0.45,color=blue!50!black, baseline=-0.22cm, rotate=180]
\coordinate (p1) at (-0.55,0);
\coordinate (p2) at (-0.2,0);
\coordinate (p3) at (0.2,0);
\coordinate (p4) at (0.55,0);
%
\fill [orange!23] 
(p1) -- (p2)
-- (p2) .. controls +(0,0.35) and +(0,0.35) ..  (p3)
-- (p3) -- (p4)
-- (p4) .. controls +(0,0.9) and +(0,0.9) ..  (p1)
;
\fill [orange!38] 
(p1) .. controls +(0,-0.15) and +(0,-0.15) ..  (p2)
-- (p2) .. controls +(0,0.15) and +(0,0.15) ..  (p1)
;
\fill [orange!38] 
(p3) .. controls +(0,-0.15) and +(0,-0.15) ..  (p4)
-- (p4) .. controls +(0,0.15) and +(0,0.15) ..  (p3)
;
%
%
\draw[thick, black] (p2) .. controls +(0,0.35) and +(0,0.35) ..  (p3); 
\draw[thick, black] (p4) .. controls +(0,0.9) and +(0,0.9) ..  (p1);
\draw[very thick, red!80!black] (p1) .. controls +(0,0.15) and +(0,0.15) ..  (p2); 
\draw[very thick, red!80!black] (p1) .. controls +(0,-0.15) and +(0,-0.15) ..  (p2); 
\draw[very thick, red!80!black] (p3) .. controls +(0,0.15) and +(0,0.15) ..  (p4); 
\draw[very thick, red!80!black] (p3) .. controls +(0,-0.15) and +(0,-0.15) ..  (p4); 
\end{tikzpicture}
)\colon \Bbbk \to U \otimes_\Bbbk V$, and~\eqref{eq:tubeZorro} translates into the identity
\be\label{eq:tubeZorroVect}
\big(  \langle-,-\rangle \otimes \id_V  \big) \circ \big(   \id_V \otimes \gamma  \big)
= \id_V
\, . 
\ee
We may choose finitely many $u_i \in U$ and $v_i \in V$ such that $\gamma(1) = \sum_i u_i \otimes v_i$ (as every element of $U�\otimes_\Bbbk V$ is of this form). 
Using this in~\eqref{eq:tubeZorroVect} we find that for every $v \in V$, 
\be 
v = \sum_i \langle v, u_i \rangle \cdot v_i
\ee 
which proves that the finite set $\{ v_i \}$ spans~$V$, so $\zz(E)$ is indeed finite-dimensional. 
Furthermore, one checks that $V \to U^*$, $v \mapsto \langle v,- \rangle$ is an isomorphism. 
\end{proof}

\begin{remark}
Proposition~\ref{prop:findim} is the main reason why TQFTs are
 comparably manageable. 
While finite-dimensional vector spaces are sufficient for the purposes of quantum computing, other realistic applications need infinite dimensions. 
There are several ways to accommodate this in the functorial approach. For example one may not allow all bordisms in the source category, or one can  endow bordisms with appropriate geometric structures such as volume dependence  (or even a metric as one would have in a non-topological QFT).
\end{remark}

As a consequence of Proposition~\ref{prop:findim}, or rather its proof, we find that taking products with a circle computes dimensions of state spaces:

\begin{corollary}
For $E$ an object in $\Bord_n$ we have $\zz(E \times S^1) = \dim_\Bbbk(\zz(E))$. 
\end{corollary}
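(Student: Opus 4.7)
The plan is to realise $E \times S^1$ as the categorical trace of the identity bordism on $E$, so that applying $\zz$ yields the usual trace of $\id_{\zz(E)}$ and hence the dimension.

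First I would verify the diffeomorphism of bordisms $\emptyset \to \emptyset$,
\begin{equation*}
E \times S^1 \;\cong\; \mathrm{ev} \circ \beta_{E,\overline E} \circ \mathrm{coev},
\end{equation*}
where $\mathrm{coev}\colon \emptyset \to E \sqcup \overline E$ is the bent cylinder of \eqref{eq:bendcyl2}, $\mathrm{ev}\colon \overline E \sqcup E \to \emptyset$ is the bent cylinder of \eqref{eq:bendcyl1}, and $\beta_{E, \overline E}$ is the symmetric braiding bordism built via the cylinder construction \eqref{eq:cylinderconstruction}. Geometrically, this composition glues two ``U-shaped half-cylinders'' on $E$ along their $E \sqcup \overline E$ boundary, with orientations matched by $\beta$, producing the closed $n$-manifold $E \times S^1$; this is a routine but slightly fiddly diffeomorphism check using collared smoothings as mentioned in Remark~\ref{rem:luck-with-gluing}.

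Applying $\zz$ and using symmetric monoidality, together with the identifications $\langle-,-\rangle = \zz(\mathrm{ev})$ and $\gamma = \zz(\mathrm{coev})$ introduced in the proof of Proposition~\ref{prop:findim}, I obtain
\begin{equation*}
\zz(E \times S^1) \;=\; \langle-,-\rangle \circ \beta_{U,V} \circ \gamma \;\in\; \Bbbk,
\end{equation*}
where $U := \zz(E)$ and $V := \zz(\overline E)$. To evaluate this scalar, pick a basis $\{v_i\}_{i=1}^d$ of $V$, and let $\{u_i\} \subset U$ be the basis dual to it under the isomorphism $V \to U^*$, $v \mapsto \langle v, -\rangle$ established in the proof of Proposition~\ref{prop:findim}, so that $\langle v_j, u_i\rangle = \delta_{ji}$. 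Applying the snake identity \eqref{eq:tubeZorroVect} to $v = v_j$ then forces $\gamma(1) = \sum_i u_i \otimes v_i$, and therefore
\begin{equation*}
\zz(E \times S^1) \;=\; \sum_i \langle v_i, u_i\rangle \;=\; d \;=\; \dim_\Bbbk \zz(E),
\end{equation*}
as claimed.

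I expect the main obstacle to be the diffeomorphism in the first step: once this geometric fact is in hand, the rest is a direct unwinding of the duality from Proposition~\ref{prop:findim}. A more conceptual route would be to observe that $E \times S^1$ is the categorical trace of $1_E$ in the rigid symmetric monoidal category $\Bord_n$, and that symmetric monoidal functors preserve traces; but the elementary computation above keeps the argument self-contained.
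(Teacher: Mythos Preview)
Your proof is correct and follows essentially the same approach as the paper: decompose $E\times S^1$ as the composite of the bent cylinders \eqref{eq:bendcyl1}, \eqref{eq:bendcyl2} with the symmetric braiding, and then identify the image under $\zz$ with the trace of $\id_{\zz(E)}$. Your version simply spells out the basis computation that the paper summarises in the phrase ``produces the trace over the identity map on $\zz(E)$''.
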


\begin{proof}
$E \times S^1$ is diffeomorphic to composing the symmetric braiding $E \sqcup \overline E \to \overline E \sqcup E$ with the two ``bent'' cylinders in~\eqref{eq:bendcyl1} and~\eqref{eq:bendcyl2} on either side. 
Using the expression for the bent cylinders in terms of a pairing and its copairing produces the trace over the identity map on $\zz(E)$.
\end{proof}

The next result exemplifies why TQFTs in higher dimensions must be much more complicated. Namely, an $n$-dimensional TQFT can produce lots and lots of lower-dimensional TQFTs via a process called \textsl{dimensional reduction}:

\begin{proposition}
\label{prop:dimred}
Let $\zz\colon \Bord_n \to \Vectk$ be a TQFT and let $X$ be a closed, compact, oriented $r$-manifold with $r<n$. Then
\be 
	\zz^{\mathrm{red}}\colon \Bord_{n-r} \lra \Vectk
	\, , \quad
	\zz^{\mathrm{red}} \Big(E \stackrel{M}{\lra} F\Big)
	:= \zz\Big(E \times X \xrightarrow{M \times X} F \times X\Big)
\ee 
is again a TQFT.	
\end{proposition}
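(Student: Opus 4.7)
The plan is to exhibit $\zz^{\mathrm{red}}$ as the composition $\zz \circ F_X$, where
\[
F_X \colon \Bord_{n-r} \lra \Bord_n \, , \qquad E \longmapsto E \times X \, , \quad M \longmapsto M \times X \, ,
\]
and then observe that $F_X$ is itself a symmetric monoidal functor. Since symmetric monoidal functors compose, this will immediately give that $\zz^{\mathrm{red}}$ is a symmetric monoidal functor $\Bord_{n-r} \to \Vectk$, i.e.\ a TQFT.

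First I would check that $F_X$ is well-defined at the level of objects and morphisms. If $E$ is a closed oriented $(n-r-1)$-manifold, then $E \times X$ is a closed oriented $(n-1)$-manifold with the product orientation, hence an object of $\Bord_n$. If $(M,\iota_{\mathrm{in}},\iota_{\mathrm{out}})\colon E \to F$ is a bordism, then $M \times X$ is a compact oriented $n$-manifold, its boundary is $\partial M \times X = (\overline{E} \sqcup F) \times X = \overline{E \times X} \sqcup (F \times X)$, and the boundary parametrisations are $\iota_{\mathrm{in}} \times \id_X$ and $\iota_{\mathrm{out}} \times \id_X$. Diffeomorphism classes are preserved because an orientation-preserving diffeomorphism $\psi\colon M \to M'$ gives rise to an orientation-preserving diffeomorphism $\psi \times \id_X \colon M \times X \to M' \times X$ intertwining the parametrisations, so $F_X$ is well-defined on morphisms in $\Bord_{n-r}$.

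Next I would verify the functoriality of $F_X$. Identities are preserved because the cylinder on $E \times X$ is $(E \times X) \times [0,1] \cong (E \times [0,1]) \times X$, which is $F_X$ applied to the cylinder on $E$. For composition, the key point is that gluing commutes with taking products: for bordisms $M_1\colon E \to F$ and $M_2\colon F \to G$, one has a canonical homeomorphism
\[
(M_2 \sqcup_F M_1) \times X \;\cong\; (M_2 \times X) \sqcup_{F \times X} (M_1 \times X) \, ,
\]
and, by Remark \ref{rem:luck-with-gluing}, the smooth structure on either side is unique up to diffeomorphism, so these agree as morphisms in $\Bord_n$. The symmetric monoidal structure is equally straightforward: $\emptyset \times X = \emptyset$, the obvious equality $(E \sqcup E') \times X = (E \times X) \sqcup (E' \times X)$ provides the coherence isomorphism, and under these identifications the braiding $\beta_{E,E'} \times \id_X$ corresponds precisely to $\beta_{E \times X, E' \times X}$ via the cylinder construction \eqref{eq:cylinderconstruction}, because the canonical swap diffeomorphism on the disjoint union crossed with $X$ is the swap on the product.

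The only step with any real content is therefore the compatibility of the product $- \times X$ with gluing of bordisms, which is the same subtlety already highlighted in Remark \ref{rem:luck-with-gluing}; I would cite that remark and the reference \cite[Thm.\,1.3.12]{Kockbook} rather than redo the smooth-structure discussion. Once $F_X\colon \Bord_{n-r} \to \Bord_n$ is confirmed to be symmetric monoidal, the composite $\zz^{\mathrm{red}} = \zz \circ F_X$ is a symmetric monoidal functor $\Bord_{n-r} \to \Vectk$, which is exactly what we needed.
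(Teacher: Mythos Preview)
Your proposal is correct and follows exactly the same approach as the paper: exhibit $(-)\times X$ as a symmetric monoidal functor $\Bord_{n-r}\to\Bord_n$ and compose with $\zz$. The paper's own argument is in fact a single sentence to this effect, so you have simply fleshed out the details (well-definedness, functoriality, compatibility with gluing and the monoidal structure) that the paper leaves implicit.
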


There is actually nothing to prove here except to quickly verify that $(-) \times X$ is a symmetric monoidal functor $\Bord_{n-r} \to \Bord_n$, so that $\zz^{\mathrm{red}}$ is just the composition of two symmetric monoidal functors.

\medskip

Finally we give a property which does not apply to all $n$-dimensional TQFTs, but only to those whose state space on the $(n-1)$-dimensional sphere is 1-dimensional. 
Three-dimensional Chern-Simons theory for a compact Lie group has this property, but the 2-dimensional examples in Section~\ref{subsec:2dTQFTs} typically do not.

Given two connected $n$-manifolds $M,N$, possibly with non-empty boundary, one can produce a new connected $n$-manifold by taking their \textsl{connected sum} $M \# N$. 
To define this in more detail, let us take $M,N$ to be connected bordisms $M\colon \emptyset \to E$, $N\colon F \to \emptyset$. 
Now cut out $n$-balls $B^n$ from~$M$ and~$N$, i.\,e.\ write $M$ and $N$ as compositions
\be
	M = \Big[ \emptyset \stackrel{B^n}{\lra} S^{n-1} \stackrel{\widetilde M}{\lra} E \Big]
	\, , \quad
	N = \Big[ F \stackrel{\widetilde N}{\lra}
	S^{n-1} \stackrel{B^n}{\lra} \emptyset
	\Big] \, .
\ee
The connected sum is now defined as
\be
	M \# N := \widetilde M \circ \widetilde N \, .
\ee
Note that as $M$ and $N$ are connected, choosing different ways to cut out the $n$-balls will give diffeomorphic manifolds after composition. 
Thus the bordism $\widetilde M \circ \widetilde N$ does not depend on this choice.

We can now state:

\begin{proposition}
\label{prop:consum}
Let $\zz\colon \Bord_n \to \Vectk$ be a TQFT with $\dim_{\Bbbk}\zz(S^{n-1}) = 1$. 
Then $\zz(S^n) \neq 0$ and for any connected $M\colon \emptyset \to E$, $N\colon F \to \emptyset$ in $\Bord_n$ we have
\be 
	\zz(M \# N) ~=~ \tfrac{1}{\zz(S^n)} \, \zz(M) \circ \zz(N)
	:~ \zz(F) \lra \zz(E) \, .
\ee 
\end{proposition}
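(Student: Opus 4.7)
First I would fix ball bordisms $B^n_{\mathrm{in}}\colon \emptyset \to S^{n-1}$ and $B^n_{\mathrm{out}}\colon S^{n-1} \to \emptyset$, and set $v := \zz(B^n_{\mathrm{in}})(1) \in \zz(S^{n-1})$ and $\lambda := \zz(B^n_{\mathrm{out}}) \in \zz(S^{n-1})^*$. Gluing two balls along their common $S^{n-1}$-boundary realises $S^n = B^n_{\mathrm{out}} \circ B^n_{\mathrm{in}}$, so functoriality yields $\zz(S^n) = \lambda(v)$. Composing in the opposite order produces the rank-$\leq 1$ endomorphism $x \mapsto \lambda(x)\,v$ of $\zz(S^{n-1})$, which, since this space is one-dimensional, must equal $\lambda(v) \cdot \id_{\zz(S^{n-1})} = \zz(S^n) \cdot \id_{\zz(S^{n-1})}$.

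Next I would argue $\zz(S^n) \neq 0$. For $n \geq 2$, the manifold $S^{n-1} \times S^1$ is connected and closed, so excising an open ball presents it as $\widetilde X \circ B^n_{\mathrm{in}}$ with $\widetilde X \colon S^{n-1} \to \emptyset$. By the corollary to Proposition \ref{prop:findim} one has $\zz(S^{n-1} \times S^1) = \dim_\Bbbk \zz(S^{n-1}) = 1$, which forces $v \neq 0$; the analogous decomposition through $B^n_{\mathrm{out}}$ yields $\lambda \neq 0$. In a one-dimensional space a nonzero covector does not annihilate a nonzero vector, so $\zz(S^n) = \lambda(v) \neq 0$. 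The case $n = 1$ follows from the same corollary with $E = S^0$: $\zz(S^1)^2 = \zz(S^0 \times S^1) = \dim_\Bbbk \zz(S^0) = 1$.

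Finally, I would combine the cut-out decompositions $M = \widetilde M \circ B^n_{\mathrm{in}}$ and $N = B^n_{\mathrm{out}} \circ \widetilde N$ with the definition $M \# N = \widetilde M \circ \widetilde N$. Functoriality together with the identity from the first paragraph then gives
\begin{align*}
\zz(M) \circ \zz(N) &= \zz(\widetilde M) \circ \bigl(\zz(B^n_{\mathrm{in}}) \circ \zz(B^n_{\mathrm{out}})\bigr) \circ \zz(\widetilde N) \\
&= \zz(S^n) \cdot \zz(\widetilde M) \circ \zz(\widetilde N) \;=\; \zz(S^n) \cdot \zz(M \# N),
\end{align*}
and dividing by the nonzero scalar $\zz(S^n)$ yields the claim. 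The main obstacle is really the nonvanishing of $\zz(S^n)$: the algebraic identity $\lambda v = \zz(S^n)\,\id$ is essentially automatic from one-dimensionality, but ruling out the degenerate value $0$ requires the genuinely geometric input $\zz(S^{n-1} \times S^1) \neq 0$, which is what lets us use that every connected closed $n$-manifold factors through the cap.
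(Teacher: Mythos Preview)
Your proof is correct. The derivation of the key identity $\zz(M)\circ\zz(N) = \zz(S^n)\cdot\zz(M\#N)$ is essentially the same as the paper's: both reduce to the observation that on a one-dimensional space the endomorphism $x\mapsto\lambda(x)\,v$ equals $\lambda(v)\cdot\id$ (the paper phrases this as Witten's ``wonderful fact of one dimensional linear algebra'').

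Where you genuinely diverge is in establishing $\zz(S^n)\neq 0$. The paper argues by contradiction: it feeds the bent cylinders over $S^{n-1}$ (the duality coevaluation and evaluation) into the identity just derived, so that $\zz(S^n)=0$ would force $\zz(M)\circ\zz(N)=0$, and then uses a Zorro-move factorisation $X\circ(1_{S^{n-1}}\sqcup(M\circ N))\circ Y = 1_{S^{n-1}}$ to conclude that the identity on the one-dimensional space $\zz(S^{n-1})$ would vanish. Your route is more direct: you invoke the dimension formula $\zz(S^{n-1}\times S^1)=\dim_\Bbbk\zz(S^{n-1})=1$ and factor this nonzero invariant through $B^n_{\mathrm{in}}$ (respectively $B^n_{\mathrm{out}}$) to see that $v\neq 0$ and $\lambda\neq 0$, whence $\zz(S^n)=\lambda(v)\neq 0$ on a one-dimensional space. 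This is a clean shortcut that avoids the contradiction and the auxiliary bordisms $X,Y$; the price is the case distinction at $n=1$, where $S^{n-1}\times S^1$ is disconnected so your ball-excision argument does not apply. The paper's Zorro-move argument is uniform in $n$, but yours is arguably more transparent for $n\geq 2$.
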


\begin{proof}
Using the notation from the definition of the connected sum, let us abbreviate
\begin{align}
\psi &= \zz\Big(S^{n-1}\stackrel{\widetilde M}{\lra}E\Big)  \, , &
\chi &= \zz\Big(F\stackrel{\widetilde N}{\lra}S^{n-1}\Big) \, ,
\nonumber \\
v &= \zz\Big(\emptyset \stackrel{B^n}{\lra} S^{n-1}\Big) \, , &
v' &= \zz\Big(S^{n-1} \stackrel{B^n}{\lra} \emptyset\Big) \, .
\end{align}
We have pairings $(\psi,\chi) = \psi \circ \chi: \zz(F) \to \zz(E)$, $(v',v) \in \Bbbk$, etc.
Now we appeal to our assumption that $\zz(S^{n-1})$ is 1-dimensional, namely, using \cite[P.\,393]{wittenjones} 
``the wonderful fact of one dimensional linear algebra
$(\psi,\chi) \cdot (v',v) = (\psi,v) \cdot (v',\chi)$,'' we arrive at 
\be
\zz(S^n)
	\cdot	
	\zz(M \# N) ~=~  \zz(M) \circ \zz(N) \, .
\ee	

Suppose now that $\zz(S^n)$ were zero. Take $M$ to be \eqref{eq:bendcyl2} and $N$ to be \eqref{eq:bendcyl1}, both for $E = S^{n-1}$. Then the above equation forces $\zz(M) \circ \zz(N)=0$. However, we can find bordisms $X\colon E \sqcup E \sqcup \overline E \to E$ and $Y\colon E \to E \sqcup \overline E \sqcup E$ such that
\be
	X \circ (1_E \sqcup (M \circ N)) \circ Y ~=~ 1_E \, ,
\ee
where $1_E$ is the cylinder over $E=S^{n-1}$. Applying $\zz$ to the left-hand side gives zero, but applying it to the right-hand side gives the identity map on $\zz(S^{n-1})$, which is not zero as by assumption $\zz(S^{n-1})$ is 1-dimensional. This is a contradiction and hence $\zz(S^n) \neq 0$.
\end{proof}

\subsection{Comparing TQFTs via monoidal natural transformations}\label{sec:compareTQFT}

Of course, once one has seen Definition \ref{def:closedTQFT}, the answer to the question ``How should we compare two $n$-dimensional TQFTs?'' 
naturally is 
``by monoidal natural transformations'', allowing one to display one's category-savviness by not adding the qualifier ``symmetric'' in front of ``monoidal natural transformation'' (cf.~\cite[Def.\,11]{BaezEveryone} or \cite[Sect.\,2.4]{EGNO}).
	
But as we did the path integral detour before stating that definition, let us at least briefly pause and see how one would compare TQFTs from that perspective.

\medskip

So, suppose we are given two $n$-dimensional TQFTs in the sense of the data and conditions in (i)--(v) of Section \ref{subsec:moti}. 
Write $\mathcal{H}_E$ and $\mathcal{H}'_E$ for their state spaces on $(n-1)$-manifolds $E$, and $\zz(M)$ and $\zz'(M)$ for the vectors assigned to an $n$-manifold~$M$ with $\partial M = E$ by the two TQFTs. 

Let us address the slightly simpler question of what it could mean for the two TQFTs to be ``the same'' (or better: equivalent). Surely, we want the state spaces to be isomorphic. So we demand that there is, for each $(n-1)$-manifold~$E$, a linear isomorphism $\chi_E\colon \mathcal{H}_E \to \mathcal{H}'_E$. 
There is an obvious compatibility condition with $\zz$ and $\zz'$. 
Namely, for~$M$ with $\partial M = E$ we require $\zz'(M) = \chi_E(\zz(M))$. 

But what if $M = M_1 \sqcup M_2$, with $\partial M_1 = E_1$, $\partial M_2 = E_2$? 
Then $\mathcal{H}_E = \mathcal{H}_{E_1} \otimes \mathcal{H}_{E_2}$, and dito for $\mathcal{H}'_E$. 
But now we have two choices for the isomorphism, we could take $\chi_E$, or we could take $\chi_{E_1} \otimes \chi_{E_2}$. 
However, we are not free to choose due to our compatibility condition with $\zz$,
\begin{align}
	\chi_E(\zz(M))
	&=
	\zz'(M) 
	= 
	\zz'(M_1 \sqcup M_2)
	= 
	\zz'(M_1) \otimes \zz'(M_2)
	\nonumber \\
	&=
	\chi_{E_1}(\zz(M_1)) \otimes \chi_{E_2}(\zz(M_2)) \, ,
\end{align}
that is, $\chi_E(v) = (\chi_{E_1} \otimes \chi_{E_2})(v)$ for all $v$ of the form $\zz(M_1) \otimes \zz(M_2)$. 
The easiest way to satisfy this clearly is to initially define $\chi_E$ only on connected $(n-1)$-manifolds and then extend it to all $(n-1)$-manifolds by taking tensor products,
\be\label{eq:chi-nonconnected-mfld}
	\chi_{E_1 \sqcup \cdots \sqcup E_k} :=
	\chi_{E_1} \otimes \cdots \otimes \chi_{E_k}
\ee
for $E_1,\dots,E_k$ connected.

\medskip

Using the formalisation of conditions (i)--(v) for $\mathcal{H}$ and $\zz$ from Section \ref{subsec:moti} in terms of $\mathcal{Y}$ as given in Section \ref{sec:compare}, we arrive at the following notion of equivalence:

\begin{definition}
Two sets of data $\mathcal{Y}$ and $\mathcal{Y}'$ as in Section \ref{sec:compare} and satisfying the conditions of Lemma \ref{lem:sym-mon-fun_vs_path-int}\,(ii) are \textsl{equivalent} if there is a family of isomorphisms $\chi_E\colon \mathcal{Y}(E) \to \mathcal{Y}'(E)$, where $E \in \Bord_n$ is connected, such that for all~$E$ and $M\colon \emptyset \to E$ we have 
\be
	\chi_E(\mathcal{Y}(M))
	=
	\mathcal{Y}'(M) \, .
\ee
\end{definition}

\begin{lemma}
Let $\mathcal{Y}$ and $\mathcal{Y}'$ satisfy the conditions of Lemma \ref{lem:sym-mon-fun_vs_path-int}\,(ii), and let $\chi_E\colon \mathcal{Y}(E) \to \mathcal{Y}'(E)$ be a family of isomorphisms for connected~$E$. The following are equivalent.
\begin{enumerate}
\item
$\chi_E$ extends to a natural monoidal isomorphism between the symmetric monoidal functors corresponding to $\mathcal{Y}$ and $\mathcal{Y}'$ via Lemma \ref{lem:sym-mon-fun_vs_path-int}.
\item
$\mathcal{Y}$ and $\mathcal{Y}'$ are equivalent via $\chi_E$.
\end{enumerate}
If the $\chi_E$ satisfy the condition in (ii), the extension to a monoidal natural isomorphism in (i) is unique.
\end{lemma}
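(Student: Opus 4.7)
The plan is to separate the equivalence into its two directions, note that uniqueness falls out of the construction, and then isolate the one geometric fact that does real work.

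The direction (i)$\,\Rightarrow\,$(ii) is immediate. Assuming $\mathcal{Y}(\emptyset)=\Bbbk=\mathcal{Y}'(\emptyset)$, monoidality of a natural isomorphism $\chi$ forces $\chi_\emptyset=\id_\Bbbk$. Naturality applied to a bordism $M\colon\emptyset\to E$ gives a commutative square $\chi_E\circ\mathcal{Y}(M)=\mathcal{Y}'(M)\circ\chi_\emptyset$, and evaluating at $1\in\Bbbk$ yields $\chi_E(\mathcal{Y}(M))=\mathcal{Y}'(M)$. This is exactly (ii), while the argument also shows that once one demands monoidal naturality, the values of $\chi_E$ on connected~$E$ determine all other values: for a general object $E=E_1\sqcup\cdots\sqcup E_k$ with connected components, monoidality forces $\chi_E=\chi_{E_1}\otimes\cdots\otimes\chi_{E_k}$ up to the coherence isomorphisms identifying $\mathcal{Y}(E)\cong\mathcal{Y}(E_1)\otimes\cdots\otimes\mathcal{Y}(E_k)$. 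This already proves the uniqueness claim.

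For (ii)$\,\Rightarrow\,$(i), I take \eqref{eq:chi-nonconnected-mfld} as the definition of $\chi_E$ for disconnected~$E$. Well-definedness is automatic because the decomposition into connected components is canonical, and monoidality ($\chi_{E\sqcup F}=\chi_E\otimes\chi_F$, $\chi_\emptyset=\id_\Bbbk$) is then tautological. So the only thing to verify is naturality with respect to an arbitrary bordism $M\colon E\to F$, i.e.\ $\chi_F\circ\zz(M)=\zz'(M)\circ\chi_E$. For this I use the explicit formula for $\zz(M)$ from the proof sketch of Lemma~\ref{lem:sym-mon-fun_vs_path-int}, namely
\begin{equation*}
\zz(M)=\big(\id_{\mathcal{Y}(F)}\otimes d_E\big)\circ\big(\mathcal{Y}(\widetilde M)\otimes\id_{\mathcal{Y}(E)}\big),
\end{equation*}
and the analogous formula for $\zz'(M)$. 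Naturality then reduces to two intertwining statements: (a) $\chi_{F\sqcup\overline E}\circ\mathcal{Y}(\widetilde M)=\mathcal{Y}'(\widetilde M)$, and (b) $d'_E\circ(\chi_{\overline E}\otimes\chi_E)=d_E$. Property (a) is simply hypothesis (ii) applied to the bordism $\widetilde M\colon\emptyset\to F\sqcup\overline E$, combined with the multiplicative definition of $\chi_{F\sqcup\overline E}$.

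The main obstacle is (b), the only step that is not immediately book-keeping. I would argue as follows: apply hypothesis (ii) to the cylinder bordism $E\times[0,1]\colon\emptyset\to E\sqcup\overline E$ to obtain $\chi_{E\sqcup\overline E}(\mathcal{Y}(E\times[0,1]))=\mathcal{Y}'(E\times[0,1])$, i.e.\ the copairings $\gamma_E$ and $\gamma'_E$ of condition~a) in Lemma~\ref{lem:sym-mon-fun_vs_path-int}(ii) are intertwined by $\chi_E\otimes\chi_{\overline E}$. Because state spaces are finite-dimensional (Proposition~\ref{prop:findim}), the dual pairing of a nondegenerate copairing is unique, so intertwining the copairings forces $d_E$ and $d'_E$ to be intertwined by $\chi_{\overline E}\otimes\chi_E$, which is (b). Combining (a) and (b) gives naturality, and together with the already-verified monoidality this produces the desired monoidal natural isomorphism, completing the proof.
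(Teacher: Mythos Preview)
Your argument is correct and follows essentially the same route as the paper, which defers to the general categorical Lemma~\ref{lem:sym-mon-iso-via-Y}: its Steps~1--3 are precisely your intertwining of copairings via the cylinder, the uniqueness of the dual pairing yielding~(b), and the substitution into the explicit formula for $\zz(M)$ to obtain naturality. One small remark: you do not need finite-dimensionality (Proposition~\ref{prop:findim}) for the uniqueness of $d_E$ --- it follows directly from the Zorro moves, as in condition~a) of Lemma~\ref{lem:sym-mon-fun_vs_path-int}(ii).
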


As for the relation between $\mathcal{Y}$ and symmetric monoidal functors, this lemma is best proved in its general categorical context, which we do in Lemma \ref{lem:sym-mon-iso-via-Y}. We note that condition b) in Lemma \ref{lem:sym-mon-iso-via-Y} is automatic from the definition in \eqref{eq:chi-nonconnected-mfld} of $\chi$ for non-connected manifolds.

\medskip

At this point we feel confident that in terms of Definition~\ref{def:closedTQFT}, a natural monoidal isomorphism is the correct notion of equivalence between TQFTs. 
But what about other natural monoidal transformations? 
Here we meet what may qualify 
as a little surprise when analysing ways to compare TQFTs: 

\begin{lemma}
Let $\zz,\zz'$ be $n$-dimensional TQFTs and let 
$\eta\colon \zz \to \zz'$ be a natural monoidal transformation. Then $\eta$ is an isomorphism. 
\end{lemma}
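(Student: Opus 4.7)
The plan is to leverage the duality established in Proposition~\ref{prop:findim}: the bent-cylinder bordisms \eqref{eq:bendcyl1} and \eqref{eq:bendcyl2} give, under $\zz$, a nondegenerate pairing $\ev_\zz\colon \zz(\overline E) \otimes_\Bbbk \zz(E) \to \Bbbk$ and a copairing $\coev_\zz\colon \Bbbk \to \zz(E) \otimes_\Bbbk \zz(\overline E)$ satisfying the snake identity \eqref{eq:tubeZorroVect}, and similarly for $\zz'$. I will show that every component $\eta_E$ is an isomorphism by combining naturality of $\eta$ on these bent cylinders with its monoidality.

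Fix a connected $E \in \Bord_n$ and set $\alpha := \eta_E$, $\beta := \eta_{\overline E}$. Since $\eta$ is monoidal, $\eta_{E \sqcup \overline E}$ factors through $\alpha \otimes \beta$ via the coherence isomorphisms, and $\eta_\emptyset = \id_\Bbbk$; naturality applied to the two bent-cylinder bordisms then yields the compatibilities
\begin{align*}
(\alpha \otimes \beta) \circ \coev_\zz &= \coev_{\zz'}, & \ev_{\zz'} \circ (\beta \otimes \alpha) &= \ev_\zz.
\end{align*}
Injectivity of $\alpha$ is immediate: if $\alpha(v) = 0$ then $\ev_\zz(w \otimes v) = \ev_{\zz'}(\beta(w) \otimes \alpha(v)) = 0$ for every $w \in \zz(\overline E)$, and nondegeneracy of $\ev_\zz$ forces $v = 0$. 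For surjectivity, write $\coev_\zz(1) = \sum_i u_i \otimes v_i$, so that the first compatibility gives $\coev_{\zz'}(1) = \sum_i \alpha(u_i) \otimes \beta(v_i)$; applying the snake identity for $\zz'$ in the form $u' = (\id \otimes \ev_{\zz'})(\coev_{\zz'}(1) \otimes u')$ (the companion to the identity used at the end of the proof of Proposition~\ref{prop:findim}) to an arbitrary $u' \in \zz'(E)$ produces
\[
u' \;=\; \sum_i \alpha(u_i) \cdot \ev_{\zz'}(\beta(v_i) \otimes u') \;=\; \alpha\Bigl(\,\textstyle\sum_i \ev_{\zz'}(\beta(v_i) \otimes u') \cdot u_i\Bigr),
\]
so $u' \in \mathrm{image}(\alpha)$. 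Thus $\eta_E$ is an isomorphism for each connected $E$; for a general $E = E_1 \sqcup \dots \sqcup E_k$ in $\Bord_n$, monoidality presents $\eta_E$ as a tensor product (up to coherence isomorphisms) of the $\eta_{E_i}$, each of which is an iso, so $\eta_E$ is one as well.

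There is no genuine obstacle here: the statement is essentially the standard fact that a monoidal natural transformation between functors landing in a rigid monoidal category is automatically invertible on dualizable objects, with invertibility extracted by transporting the duality data along $\eta$. The only mild care needed is bookkeeping of the monoidal coherence isomorphisms when translating the naturality squares for the bent cylinders into the above vector-space identities.
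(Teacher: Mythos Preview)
Your proof is correct and rests on the same core idea as the paper's: the bent cylinders exhibit $\overline E$ as a dual of $E$ in $\Bord_n$, a monoidal functor carries this duality data to the target, and monoidal naturality of $\eta$ intertwines the two sets of duality data, which forces invertibility. The paper phrases this as the general categorical statement (Lemma~\ref{lem:sym-mon-inv}) valid for any monoidal target and writes down an explicit two-sided inverse $\tilde\eta_E$ built from $\coev^\zz$ and $\ev^{\zz'}$, whereas you specialise to $\Vectk$ and argue injectivity/surjectivity directly. Your route is slightly more elementary but uses the linear-algebra structure of the target; the paper's route is target-agnostic and hands you the inverse formula, which is what one would want when the target is replaced by a general symmetric monoidal category as in Remark~\ref{rem:general-target}. (One small slip in your closing remark: the relevant hypothesis is that the \emph{source} has duals, not that the target is rigid; your actual argument uses this correctly.)
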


This is actually a general fact about monoidal natural transformations between monoidal functors whose source category has duals, and we recall the argument in Appendix \ref{app:mon-nat-x-groupoid}.
The duality morphisms in $\Bord_n$ are those in \eqref{eq:bendcyl1} and \eqref{eq:bendcyl2} -- for more on dualities in monoidal categories see \cite[Sect.\,2.10]{EGNO}.

\medskip

All in all we have learned that $n$-dimensional TQFTs form a groupoid.

\section{Algebraic description of TQFTs in dimensions 1,2,3}

Apart from studying concrete examples of TQFTs, it is an important question to what extent one can control all TQFTs of a given dimension. One might formulate this goal as ``classification of TQFTs'', but in a sense this is a bad term to use. For, if someone hands you a paper saying it contains the classification of TQFTs in some dimension $n$, you might hope for some sort of list, e.\,g.\ one that says that there are such and such infinite families parametrised by this and that set, plus a bunch of exceptionals. 
However, for TQFTs beyond dimension $n=1$ this is impossible, much in the same way that there cannot be a list of ``all finite groups'' (but there is a list of all simple finite groups).

Instead, a fruitful question has been:
\begin{quote}
Can one describe a TQFT in terms of a finite amount of data which is subject to a finite number of conditions?
\end{quote}
The answer in this case turns out to be, at least in the way we described TQFTs up to now, ``yes'' in dimensions 1 and 2, and ``no'' from dimension three onwards, for a simple reason we will get to in Section~\ref{sec:extendedTQFTs}.

\medskip

The roadmap for this section is as follows: 
In Section~\ref{sec:one-dim-class} we discuss at length how 1-dimensional TQFTs are basically finite-dimensional vector spaces -- only to rephrase and refine this discussion in Section~\ref{sec:gen+rel_def}, where we introduce the notion of freely generated symmetric monoidal categories. 
Section~\ref{subsec:2dTQFTs} then examines 2-dimensional TQFTs through this lens, finding how such TQFTs are in one-to-one correspondence to commutative Frobenius algebras. 
A similarly exhaustive characterisation in three dimensions is much harder, and in Sections~\ref{sec:extendedTQFTs} and~\ref{subsec:dimred3to2} we only offer a brief tour to some of the highlights of 3-dimensional ``extended'' TQFTs.

\subsection{One-dimensional TQFTs}\label{sec:one-dim-class}

We start with the case of a 1-dimensional TQFT
\be 
\zz\colon \Bord_1 \lra \Vectk \, . 
\ee 
We first note that $\Bord_1$ is ``tensor-generated'' by just two objects: the positively and negatively oriented points~$\bullet_+$ and~$\bullet_-$, respectively. 
This is simply another way of saying that every 0-dimensional compact oriented closed manifold is a disjoint union of finitely many (including zero!) copies of~$\bullet_+$ and~$\bullet_-$. 
Hence the objects of $\Bord_1$ look like 
\be 
\emptyset
\, , \quad
\bullet_+
\, , \quad
\bullet_- \sqcup \bullet_-
\, , \quad
\bullet_+ \sqcup \bullet_+ \sqcup \bullet_- \sqcup \dots \sqcup \bullet_+
\, .  
\ee 

Morphisms in $\Bord_1$ are diffeomorphism classes of oriented lines connecting such points. 
For example, one particular morphism from $\bullet_+ \sqcup \bullet_- \sqcup \bullet_+ \sqcup \bullet_-$ to $\bullet_+ \sqcup \bullet_-$ is represented by
\be\label{eq:lineex}
\begin{tikzpicture}[very thick,scale=0.85,color=blue!50!black, baseline=1cm]
\coordinate (b1) at (0,0);
\coordinate (b2) at (1,0);
\coordinate (b3) at (2,0);
\coordinate (b4) at (3,0);
\coordinate (t1) at (1.5,3);
\coordinate (t2) at (2.5,3);
\draw[
	decoration={markings, mark=at position 0.5 with {\arrow{>}}}, postaction={decorate}
	]
 (b1) -- (0,1); 
\draw (0,1) .. controls +(0,0.7) and +(0,0.7) .. (0.8,1);
\draw (0.8,1) .. controls +(0,-0.9) and +(0.5,-0.5) .. (t1);
\draw[
	decoration={markings, mark=at position 0.5 with {\arrow{<}}}, postaction={decorate}
	]
 (b2) .. controls +(-1.4,2.7) and +(-0.3,-0.7) .. (t2); 
\draw[redirected] (b4) .. controls +(0,1.5) and +(0,1.5) .. (b3);
\draw[directed] (2.5,2) .. controls +(0,0.7) and +(0,0.7) .. (3.5,2);
\draw (3.5,2) .. controls +(0,-0.7) and +(0,-0.7) .. (2.5,2);
\fill (b1) circle (3.0pt) node[below] {{\small $+$}};
\fill (b2) circle (3.0pt) node[below] {{\small $-$}};
\fill (b3) circle (3.0pt) node[below] {{\small $+$}};
\fill (b4) circle (3.0pt) node[below] {{\small $-$}};
\fill (t1) circle (3.0pt) node[above] {{\small $+$}};
\fill (t2) circle (3.0pt) node[above] {{\small $-$}};
\end{tikzpicture}
\ee
It is an unsurprising fact (which can be proven using Morse theory) that every morphism in $\Bord_1$ can be built by composing (both via gluing and taking disjoint unions) the identity $1_{\bullet_+} = 
\begin{tikzpicture}[very thick,scale=0.2,color=blue!50!black, baseline=-0.1cm]
\draw[
	decoration={markings, mark=at position 0.5 with {\arrow{>}}}, postaction={decorate}
	]
(0,-1.25) -- (0,1.25); 
\end{tikzpicture}
\,
$ 
and the \textsl{generators}
\be\label{eq:1dgens}
\begin{tikzpicture}[very thick,scale=1.0,color=blue!50!black, baseline=.4cm]
\draw[line width=0pt] 
(3,0) node[line width=0pt] (D) {}
(2,0) node[line width=0pt] (s) {}; 
\draw[directed] (D) .. controls +(0,1) and +(0,1) .. (s);
\end{tikzpicture}
\, , \quad
\begin{tikzpicture}[very thick,scale=1.0,color=blue!50!black, baseline=-.4cm,rotate=180]
\draw[line width=0pt] 
(3,0) node[line width=0pt] (D) {}
(2,0) node[line width=0pt] (s) {}; 
\draw[redirected] (D) .. controls +(0,1) and +(0,1) .. (s);
\end{tikzpicture}
\, , \quad
\begin{tikzpicture}[very thick,scale=1.0,color=blue!50!black, baseline=.4cm]
\draw[line width=0pt] 
(3,0) node[line width=0pt] (D) {}
(2,0) node[line width=0pt] (s) {}; 
\draw[redirected] (D) .. controls +(0,1) and +(0,1) .. (s);
\end{tikzpicture}
\, , \quad
\begin{tikzpicture}[very thick,scale=1.0,color=blue!50!black, baseline=-.4cm,rotate=180]
\draw[line width=0pt] 
(3,0) node[line width=0pt] (D) {}
(2,0) node[line width=0pt] (s) {}; 
\draw[directed] (D) .. controls +(0,1) and +(0,1) .. (s);
\end{tikzpicture}
\, , \quad
\begin{tikzpicture}[very thick,scale=0.85,color=blue!50!black, baseline=0cm]
\draw[line width=0] 
(1,0.7) node[line width=0pt] (A) {}
(0,-0.7) node[line width=0pt] (A2) {}; 
\draw[line width=0] 
(0,0.7) node[line width=0pt] (B) {}
(1,-0.7) node[line width=0pt] (B2) {}; 
\draw[
	decoration={markings, mark=at position 0.3 with {\arrow{>}}}, postaction={decorate}
	]
 (A2) -- (A); 
\draw[
	decoration={markings, mark=at position 0.3 with {\arrow{>}}}, postaction={decorate}
	]
 (B2) -- (B); 
\end{tikzpicture}
\ee
where the last diagram represents the braiding bordism $\beta_{\bullet_+,\bullet_+}$ of~\eqref{eq:Bordbraid}. 
Note that typically we do not show oriented endpoints in such diagrams. 
The generators~\eqref{eq:1dgens} are subject to the \textsl{relations}
\be\label{eq:1rel}
\tikzzbox{
\begin{tikzpicture}[very thick,scale=0.85,color=blue!50!black, baseline=0cm]
\draw[line width=0] 
(-1,1.25) node[line width=0pt] (A) {}
(1,-1.25) node[line width=0pt] (A2) {}; 
\draw[directed] (0,0) .. controls +(0,-1) and +(0,-1) .. (-1,0);
\draw[directed] (1,0) .. controls +(0,1) and +(0,1) .. (0,0);
\draw (-1,0) -- (A); 
\draw (1,0) -- (A2); 
\end{tikzpicture}
}
=
\tikzzbox{
\begin{tikzpicture}[very thick,scale=0.85,color=blue!50!black, baseline=0cm]
\draw[line width=0] 
(0,1.25) node[line width=0pt] (A) {}
(0,-1.25) node[line width=0pt] (A2) {}; 
\draw[
	decoration={markings, mark=at position 0.5 with {\arrow{>}}}, postaction={decorate}
	]
 (A2) -- (A); 
\end{tikzpicture}
}
= 
\begin{tikzpicture}[very thick,scale=0.85,color=blue!50!black, baseline=0cm]
\draw[line width=0] 
(1,1.25) node[line width=0pt] (A) {}
(-1,-1.25) node[line width=0pt] (A2) {}; 
\draw[redirected] (0,0) .. controls +(0,1) and +(0,1) .. (-1,0);
\draw[redirected] (1,0) .. controls +(0,-1) and +(0,-1) .. (0,0);
\draw (-1,0) -- (A2); 
\draw (1,0) -- (A); 
\end{tikzpicture}
\, , \quad
\tikzzbox{
\begin{tikzpicture}[very thick,scale=0.85,color=blue!50!black, baseline=0cm]
\draw[line width=0] 
(1,1.25) node[line width=0pt] (A) {}
(-1,-1.25) node[line width=0pt] (A2) {}; 
\draw[directed] (0,0) .. controls +(0,1) and +(0,1) .. (-1,0);
\draw[directed] (1,0) .. controls +(0,-1) and +(0,-1) .. (0,0);
\draw (-1,0) -- (A2); 
\draw (1,0) -- (A); 
\end{tikzpicture}
}
=
\tikzzbox{
\begin{tikzpicture}[very thick,scale=0.85,color=blue!50!black, baseline=0cm]
\draw[line width=0] 
(0,1.25) node[line width=0pt] (A) {}
(0,-1.25) node[line width=0pt] (A2) {}; 
\draw[
	decoration={markings, mark=at position 0.5 with {\arrow{>}}}, postaction={decorate}
	]
(A) -- (A2); 
\end{tikzpicture}
}
= 
\begin{tikzpicture}[very thick,scale=0.85,color=blue!50!black, baseline=0cm]
\draw[line width=0] 
(-1,1.25) node[line width=0pt] (A) {}
(1,-1.25) node[line width=0pt] (A2) {}; 
\draw[redirected] (0,0) .. controls +(0,-1) and +(0,-1) .. (-1,0);
\draw[redirected] (1,0) .. controls +(0,1) and +(0,1) .. (0,0);
\draw (-1,0) -- (A); 
\draw (1,0) -- (A2); 
\end{tikzpicture}
\, ,
\ee
as well as further relations involving the braiding, namely the hexagon equations and a relation that expresses the naturality of $\beta_{\bullet_+,\bullet_+}$. 
For example, these relations tell us that the morphism~\eqref{eq:lineex} is equivalently represented by the bordism
\be 
\begin{tikzpicture}[very thick,scale=0.85,color=blue!50!black, baseline=1.22cm]
\coordinate (b1) at (0,0);
\coordinate (b2) at (1,0);
\coordinate (b3) at (2,0);
\coordinate (b4) at (3,0);
\coordinate (t1) at (0,3);
\coordinate (t2) at (1,3);
\draw[
	decoration={markings, mark=at position 0.5 with {\arrow{<}}}, postaction={decorate}
	]
 (t1) -- (b1); 
\draw[
	decoration={markings, mark=at position 0.5 with {\arrow{<}}}, postaction={decorate}
	]
 (b2) -- (t2); 
\draw[redirected] (b4) .. controls +(0,1) and +(0,1) .. (b3);
\draw[directed] (2,2) .. controls +(0,0.7) and +(0,0.7) .. (3,2);
\draw (3,2) .. controls +(0,-0.7) and +(0,-0.7) .. (2,2);
\fill (b1) circle (3.0pt) node[below] {{\small $+$}};
\fill (b2) circle (3.0pt) node[below] {{\small $-$}};
\fill (b3) circle (3.0pt) node[below] {{\small $+$}};
\fill (b4) circle (3.0pt) node[below] {{\small $-$}};
\fill (t1) circle (3.0pt) node[above] {{\small $+$}};
\fill (t2) circle (3.0pt) node[above] {{\small $-$}};
\end{tikzpicture}
\, . 
\ee 

Since with the above generators and relations $\Bord_1$ is completely under control, one can give a preliminary ``classification of'' 1-dimensional TQFTs: 

\begin{theorem}
\label{thm:1dTQFTprelim}
There is a 1-to-1 correspondence between 1-dimensional TQFTs $\zz\colon \Bord_1 \to \Vectk$ and finite-dimensional vector spaces, given by $\zz \mapsto \zz(\bullet_+)$. 
\end{theorem}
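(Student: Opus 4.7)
My plan is to construct maps in both directions and verify they are mutually inverse. In the forward direction, given a TQFT $\zz$, I note that Proposition~\ref{prop:findim} guarantees $\zz(\bullet_+)$ is a finite-dimensional $\Bbbk$-vector space, so the assignment $\zz \mapsto \zz(\bullet_+)$ does land in the expected target.

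For the backward direction, I will construct, for each finite-dimensional vector space $V$, a TQFT $\zz_V$ by specifying it on generators. On objects, I set $\zz_V(\bullet_+) := V$, $\zz_V(\bullet_-) := V^*$, and $\zz_V(\emptyset) := \Bbbk$, extending to disjoint unions via tensor products. On the generating morphisms of~\eqref{eq:1dgens}, I send the four cup/cap bordisms to the standard evaluation and coevaluation maps for the dual pair $(V,V^*)$, and the braiding bordism to the canonical symmetry $v \otimes w \mapsto w \otimes v$ of $\Vectk$. Arbitrary bordisms are then assigned linear maps by decomposing them into generators and composing/tensoring.

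The crux, and the main obstacle, is well-definedness: the resulting linear map must not depend on the chosen decomposition. Appealing to the (Morse-theoretic) statement quoted above that the generators in~\eqref{eq:1dgens} together with identities generate every morphism of $\Bord_1$ subject only to the relations~\eqref{eq:1rel}, the hexagon axioms, and naturality of the braiding, the task reduces to verifying each such relation inside $\Vectk$. The two zigzag relations~\eqref{eq:1rel} become the snake identities $(\ev_V \otimes \id_V) \circ (\id_V \otimes \coev_V) = \id_V$ and its mirror, both of which hold precisely because $V$ is finite-dimensional; the hexagon and naturality hold for the canonical symmetry on $\Vectk$ by direct inspection. Hence $\zz_V$ is a bona fide symmetric monoidal functor.

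To finish, the two constructions need to be mutually inverse up to the appropriate equivalence. Evidently $\zz_V(\bullet_+) = V$ by construction. Conversely, given any TQFT $\zz$, the proof of Proposition~\ref{prop:findim} realises $\zz(\bullet_-)$ as canonically dual to $\zz(\bullet_+)$ via the cap and cup bordisms, so any choice of isomorphism $V \xrightarrow{\sim} \zz(\bullet_+)$ determines, together with its transpose inverse, compatible isomorphisms on all objects and, by uniqueness of dual pairings, intertwines each generating morphism with its $\zz_V$-image; this yields a monoidal natural isomorphism $\zz_V \Rightarrow \zz$. The genuinely delicate point throughout is the generators-and-relations presentation of $\Bord_1$, whose careful formulation is deferred to Section~\ref{sec:gen+rel_def}; this deferral is exactly what the label ``preliminary'' on the theorem is acknowledging.
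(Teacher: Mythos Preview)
Your proposal is correct and follows essentially the same approach as the paper: use Proposition~\ref{prop:findim} for the forward direction, construct $\zz_V$ on the generators~\eqref{eq:1dgens} via the standard (co)evaluation maps, and verify the relations~\eqref{eq:1rel} hold in $\Vectk$. You go slightly beyond the paper's text by sketching why the two constructions are mutually inverse via uniqueness of duals, whereas the paper leaves this implicit; but the core argument is the same, and you correctly flag that the generators-and-relations presentation of $\Bord_1$ is the point being taken on faith here.
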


It follows from Proposition~\ref{prop:findim} that $\zz(\bullet_+)$ is indeed finite-dimensional. 
Conversely, for every finite-dimensional vector space~$V$ we construct a symmetric monoidal functor $\zz\colon \Bord_1 \to \Vectk$ as follows: set 
$\zz(\bullet_+) = V$ and $\zz(\bullet_-) = V^*$, and more generally 
\be 
\zz\big( \bullet_+^{\sqcup m} \sqcup \bullet_-^{\sqcup n} \big) = V^{\otimes m} \otimes_\Bbbk (V^*)^{\otimes n} \, . 
\ee 
To define~$\zz$ on generators we pick a basis $\{ e_i \}$ of~$V$ (with dual basis $\{ e_i^* \}$) and set 
\begin{align}
& 
\zz\Big(
\begin{tikzpicture}[very thick,scale=1.0,color=blue!50!black, baseline=.3cm]
\draw[line width=0pt] 
(3,0) node[line width=0pt] (D) {}
(2,0) node[line width=0pt] (s) {}; 
\draw[directed] (D) .. controls +(0,1) and +(0,1) .. (s);
\end{tikzpicture}
\Big)\colon V^* \otimes_\Bbbk V \lra \Bbbk
\, , \quad
\varphi \otimes v \lmt \varphi (v)
\, , 
\nonumber\\
&
\zz\Big(
\begin{tikzpicture}[very thick,scale=1.0,color=blue!50!black, baseline=-.5cm,rotate=180]
\draw[line width=0pt] 
(3,0) node[line width=0pt] (D) {}
(2,0) node[line width=0pt] (s) {}; 
\draw[redirected] (D) .. controls +(0,1) and +(0,1) .. (s);
\end{tikzpicture}
\Big)\colon \Bbbk \lra V \otimes_\Bbbk V^*
\, , \quad
\lambda \lmt \sum_i \lambda \cdot e_i \otimes e_i^*
\, , 
\nonumber\\
& 
\zz\Big(
\begin{tikzpicture}[very thick,scale=1.0,color=blue!50!black, baseline=.3cm]
\draw[line width=0pt] 
(3,0) node[line width=0pt] (D) {}
(2,0) node[line width=0pt] (s) {}; 
\draw[redirected] (D) .. controls +(0,1) and +(0,1) .. (s);
\end{tikzpicture}
\Big)\colon V \otimes_\Bbbk V^* \lra \Bbbk
\, , \quad
v \otimes \varphi \lmt \varphi (v)
\, , 
\nonumber\\
&
\zz\Big(
\begin{tikzpicture}[very thick,scale=1.0,color=blue!50!black, baseline=-.5cm,rotate=180]
\draw[line width=0pt] 
(3,0) node[line width=0pt] (D) {}
(2,0) node[line width=0pt] (s) {}; 
\draw[directed] (D) .. controls +(0,1) and +(0,1) .. (s);
\end{tikzpicture}
\Big)\colon \Bbbk \lra V^* \otimes_\Bbbk V
\, , \quad
\lambda \lmt \sum_i \lambda \cdot e_i^* \otimes e_i
\, , 
\nonumber\\
&
\zz\Big(
\begin{tikzpicture}[very thick,scale=0.85,color=blue!50!black, baseline=-0.1cm]
\draw[line width=0] 
(1,0.7) node[line width=0pt] (A) {}
(0,-0.7) node[line width=0pt] (A2) {}; 
\draw[line width=0] 
(0,0.7) node[line width=0pt] (B) {}
(1,-0.7) node[line width=0pt] (B2) {}; 
\draw[
	decoration={markings, mark=at position 0.3 with {\arrow{>}}}, postaction={decorate}
	]
 (A2) -- (A); 
\draw[
	decoration={markings, mark=at position 0.3 with {\arrow{>}}}, postaction={decorate}
	]
 (B2) -- (B); 
\end{tikzpicture}
\Big)\colon V \otimes_\Bbbk V \lra V \otimes_\Bbbk V
\, , \quad
u \otimes v \lmt v \otimes u 
\, . 
\end{align}
It is straightforward to verify that~$\zz$ really is a symmetric monoidal functor, i.\,e.~that it respects the relations~\eqref{eq:1rel}: for the second identity, say, we compute 
\begin{align}
\zz\Big(
\begin{tikzpicture}[very thick,scale=0.55,color=blue!50!black, baseline=-0.1cm]
\draw[line width=0] 
(1,1.25) node[line width=0pt] (A) {}
(-1,-1.25) node[line width=0pt] (A2) {}; 
\draw[redirected] (0,0) .. controls +(0,1) and +(0,1) .. (-1,0);
\draw[redirected] (1,0) .. controls +(0,-1) and +(0,-1) .. (0,0);
\draw (-1,0) -- (A2); 
\draw (1,0) -- (A); 
\end{tikzpicture}
\Big)
& = \Big( v \lmt \big( \zz\Big(
\begin{tikzpicture}[very thick,scale=1.0,color=blue!50!black, baseline=.3cm]
\draw[line width=0pt] 
(3,0) node[line width=0pt] (D) {}
(2,0) node[line width=0pt] (s) {}; 
\draw[redirected] (D) .. controls +(0,1) and +(0,1) .. (s);
\end{tikzpicture}
\Big)
\otimes \id \big) \circ \big( \id \otimes \zz\Big(
\begin{tikzpicture}[very thick,scale=1.0,color=blue!50!black, baseline=-.5cm,rotate=180]
\draw[line width=0pt] 
(3,0) node[line width=0pt] (D) {}
(2,0) node[line width=0pt] (s) {}; 
\draw[directed] (D) .. controls +(0,1) and +(0,1) .. (s);
\end{tikzpicture}
\Big)
\big)
(v \otimes 1)
\Big)
\nonumber\\
& = \Big( v \lmt \big( \zz\Big(
\begin{tikzpicture}[very thick,scale=1.0,color=blue!50!black, baseline=.3cm]
\draw[line width=0pt] 
(3,0) node[line width=0pt] (D) {}
(2,0) node[line width=0pt] (s) {}; 
\draw[redirected] (D) .. controls +(0,1) and +(0,1) .. (s);
\end{tikzpicture}
\Big)
\otimes \id \big) \circ \big( v \otimes \sum_i e^*_i \otimes e_i \big)  \Big)
\nonumber\\
& = \Big( v \lmt \sum_i e_i^*(v) \cdot e_i \Big)
\nonumber\\
& = 
\zz\Big(
\begin{tikzpicture}[very thick,scale=0.55,color=blue!50!black, baseline=-0.1cm]
\draw[line width=0] 
(0,1) node[line width=0pt] (A) {}
(0,-1.0) node[line width=0pt] (A2) {}; 
\draw[
	decoration={markings, mark=at position 0.5 with {\arrow{>}}}, postaction={decorate}
	]
 (A2) -- (A); 
\end{tikzpicture}
\Big)
\, , 
\end{align}
and the other relations are checked similarly. 
This establishes the 1-to-1 correspondence of Theorem~\ref{thm:1dTQFTprelim}. 

The upshot so far is that 1-dimensional TQFTs are boring: finite-dimensional vector spaces with no further structure. 
So they are basically natural numbers. 

\medskip

The reason why the above way of stating Theorem \ref{thm:1dTQFTprelim} is preliminary is that ``1-to-1 correspondence'' is not really a term one should use when comparing categories. Indeed, we have learned in Section \ref{sec:compareTQFT} that TQFTs of a given dimension form a groupoid. So really we would like to have a statement like this:
$$
\begin{tikzpicture}[
			     baseline=(current bounding box.base), 
			     >=stealth,
			     descr/.style={fill=white,inner sep=4.5pt}, 
			     normal line/.style={->}
			     ] 
\matrix (m) [matrix of math nodes, row sep=3.5em, column sep=4.5em, text height=1.1ex, text depth=0.1ex] {%
\Big(
\begin{minipage}{10em}
\begin{center}
groupoid of\\ 
$n$-dimensional TQFTs
\end{center}
\end{minipage}
\Big)
& 
\Big(\begin{minipage}{13em}
\begin{center}
some algebraic structure\\
which also forms a groupoid
\end{center}
\end{minipage}
\Big)
\\
};
\path[font=\footnotesize] (m-1-1) edge[->] node[auto] {functorial} (m-1-2);
\path[font=\footnotesize] (m-1-1) edge[->] node[below] {equivalence} (m-1-2);
\end{tikzpicture}
$$ 
The algebraic structure suggested by our preliminary theorem is finite-dimensional vector spaces. However, the natural way of comparing vector spaces are linear maps, and these do not form a groupoid. Of course, one can just throw out all non-invertible linear maps, but the structural purist 
will insist that firstly, one would then have failed to identify the correct algebraic structure, and secondly, there is no inherent reason to give preference to $\bullet_+$ over $\bullet_-$. 

So, let us instead describe the category $\mathcal{DP}_\Bbbk$ of \textsl{dual pairs}: 
\begin{itemize}
\item \textsl{Objects:} tuples $(U,V,b,d)$, where $U,V$ are $\Bbbk$-vector spaces and $b\colon \Bbbk \to U \otimes V$ and $d\colon V \otimes U \to \Bbbk$ (``birth'' and ``death'') are two linear maps which satisfy the 
	\textsl{Zorro moves} 
	$
	(d\otimes \id_V) \circ (\id_V \otimes b) = \id_V
	$
	and 
	$
	(\id_U\otimes d) \circ (b \otimes \id_U) = \id_U
	$. 
That is, the vector spaces~$U$ and~$V$ are dual to each other, and the duality is exhibited by $b,d$.
\item \textsl{Morphisms:} A morphism $(U,V,b,d) \to (U',V',b',d')$ is a pair $(f,g)$ of linear maps, where $f\colon U \to U'$ and $g\colon V \to V'$ and
\be 
	d 
	= 
	d' \circ (g \otimes f) 
	\, , \quad
	b \circ (f \otimes g)
	=
	b'
 \, .
\ee 
\end{itemize}
There are two important points to note about  $\mathcal{DP}_\Bbbk$. 
Firstly, the vector spaces $U,V$ in a dual pair are necessarily finite-dimensional and of the same dimension (cf.\ Proposition \ref{prop:findim}). 
Secondly, morphisms between dual pairs are automatically invertible. The argument is essentially the same as the one for monoidal natural transformations given in Appendix \ref{app:mon-nat-x-groupoid}. Even better, it is a direct consequence of Lemma \ref{lem:sym-mon-inv} as we will point out in the next section.

Having said all this, let us state, without proof, the final version of Theorem~\ref{thm:1dTQFTprelim}:

\begin{theorem}
\label{thm:1dTQFT}
The functor 
$
{
\zz \mapsto \big( \zz(\bullet_+) , \zz(\bullet_-) , \zz(
\begin{tikzpicture}[very thick, scale=0.39, color=blue!50!black, baseline=-0.25cm, rotate=180, >=stealth]
\draw[-{stealth[scale width=1.1]}] 
(2,0) .. controls +(0,1) and +(0,1) .. (3,0);
\end{tikzpicture} 
),
\zz(\,
\begin{tikzpicture}[very thick, scale=0.39, color=blue!50!black, baseline=0.05cm, >=stealth]
\draw[-{stealth[scale width=1.1]}] 
(3,0) .. controls +(0,1) and +(0,1) .. (2,0);
\end{tikzpicture} 
) 
 \big)
 }
$ 
is an equivalence of groupoids between 1-dimensional TQFTs and $\mathcal{DP}_\Bbbk$.
\end{theorem}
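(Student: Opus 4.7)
My plan is to construct the inverse functor $G\colon \mathcal{DP}_\Bbbk \to \{\text{1-dim TQFTs}\}$ and then show $F$ and $G$ are mutually quasi-inverse; the bulk of the work is a generators-and-relations argument for $\Bord_1$.

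First I would verify that $F$ is well-defined on objects: given a TQFT $\zz$, the tuple $(\zz(\bullet_+),\zz(\bullet_-),\zz(\raisebox{-1pt}{\scalebox{.7}{\protect$\cup$}}),\zz(\raisebox{-1pt}{\scalebox{.7}{\protect$\cap$}}))$ lands in $\mathcal{DP}_\Bbbk$ because applying the symmetric monoidal functor $\zz$ to the two snake/Zorro relations of~\eqref{eq:1rel} yields precisely the two Zorro identities in the definition of dual pairs. Well-definedness on morphisms is analogous: given a monoidal natural isomorphism $\eta\colon \zz \Rightarrow \zz'$, set $f := \eta_{\bullet_+}$ and $g := \eta_{\bullet_-}$; then naturality of $\eta$ applied to the cup and cap bordisms, together with the monoidal coherence of $\eta$ on $\bullet_+ \sqcup \bullet_-$, yields the two required intertwining equations. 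Here I would invoke the fact recalled just after Theorem~\ref{thm:1dTQFTprelim} (and Lemma~\ref{lem:sym-mon-inv}) that any such $\eta$ is automatically invertible, so the image indeed sits in the groupoid $\mathcal{DP}_\Bbbk$.

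The main obstacle is essential surjectivity, i.e.\ constructing $G$. Given a dual pair $(U,V,b,d)$, I want to define a symmetric monoidal functor $\zz\colon \Bord_1 \to \Vectk$ by sending $\bullet_+ \mapsto U$, $\bullet_- \mapsto V$, extending to disjoint unions via tensor products, and assigning to the four cup/cap generators~\eqref{eq:1dgens} the maps $b,d$ together with their partners $b^{\mathrm{op}},d^{\mathrm{op}}$ obtained by composing with the symmetric braiding of $\Vectk$; the braiding generator of $\Bord_1$ goes to the braiding of $\Vectk$. For this to give a functor, one must check that $\zz$ respects all the defining relations of $\Bord_1$, namely the two Zorro moves~\eqref{eq:1rel} (which translate directly into the Zorro axioms of the dual pair), the symmetry relation $\beta^2 = \mathrm{id}$ and the hexagon equations (which follow from the symmetric structure of $\Vectk$), and the naturality of the braiding with respect to cups and caps (which is a direct tensor-calculus check using $b$ and $d$). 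The crucial input is that the list~\eqref{eq:1dgens} together with~\eqref{eq:1rel} really does present $\Bord_1$ as a symmetric monoidal category; this is the Morse-theoretic fact alluded to in the paragraph preceding Theorem~\ref{thm:1dTQFTprelim}, and the more systematic language of Section~\ref{sec:gen+rel_def} would make this rigorous. One then verifies $F \circ G \cong \mathrm{id}_{\mathcal{DP}_\Bbbk}$ tautologically and constructs an isomorphism $G \circ F \cong \mathrm{id}$ by taking the identity on $\zz(\bullet_\pm)$ and extending monoidally.

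For full faithfulness, given a morphism $(f,g)\colon F(\zz)\to F(\zz')$ in $\mathcal{DP}_\Bbbk$, I would define $\eta_{\bullet_+} := f$, $\eta_{\bullet_-} := g$, and extend to all objects by tensor products and the coherence isomorphisms. Naturality of this $\eta$ on any bordism then reduces, via the generators-and-relations description of $\Bord_1$, to naturality on the four cup/cap generators and on the braiding; the former is precisely the compatibility built into a $\mathcal{DP}_\Bbbk$-morphism, and the latter is automatic from the symmetric structure. Uniqueness of $\eta$ follows because its values on $\bullet_\pm$ are fixed by $(f,g)$ and the extension to disjoint unions is forced by monoidality, as in the connected-pieces argument of Section~\ref{sec:compareTQFT}. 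Thus $F$ is essentially surjective and fully faithful, hence an equivalence of groupoids.
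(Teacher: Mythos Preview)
Your proposal is correct and rests on the same underlying idea as the paper: the generators-and-relations presentation of $\Bord_1$. The paper states Theorem~\ref{thm:1dTQFT} explicitly ``without proof'' at that point, but then in Section~\ref{sec:gen+rel_def} supplies what amounts to the proof by recasting the statement in the language of freely generated symmetric monoidal categories: one shows that $\Bord_1 = \mathcal{F}(G_0,G_1,G_2)$ for $G_0 = \{\bullet_+,\bullet_-\}$, $G_1$ the two morphisms~\eqref{eq:genrelex-mor}, and $G_2$ the two Zorro relations~\eqref{eq:genrelex-rel}, and then the universal property of $\mathcal{F}(G_0,G_1,G_2)$ directly yields the equivalence~\eqref{eq:1dim-genrel-dualpair} with $\mathcal{DP}_\Bbbk$.

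The difference is one of packaging. You build the quasi-inverse $G$ by hand, assigning values on all generators (including the braiding and the ``other'' two cup/caps) and checking relations one by one, then verify full faithfulness and essential surjectivity separately. The paper instead absorbs all of this into the abstract universal property: once $\Bord_1$ is identified as freely generated, the equivalence $\mathrm{Fun}_{\otimes,\mathrm{sym}}(\Bord_1,\Vectk) \simeq F^{G_0,G_1,G_2}(\Vectk) \simeq \mathcal{DP}_\Bbbk$ falls out without further work, and in particular the groupoid property of $\mathcal{DP}_\Bbbk$ follows from Lemma~\ref{lem:sym-mon-inv} applied to the left-hand side. Your approach is more elementary and self-contained; the paper's buys reusability (the same framework is invoked verbatim for $\Bord_2$ in Section~\ref{subsec:2dTQFTs}). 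Note also that the paper uses only two generating morphisms in $G_1$, not four: the remaining cup/cap and the braiding are already present in any symmetric monoidal category, so they need not be added as generators --- this is exactly what your phrase ``$b^{\mathrm{op}},d^{\mathrm{op}}$ obtained by composing with the symmetric braiding'' is doing implicitly.
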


\subsection{Interlude: Generators and relations}
\label{sec:gen+rel_def}

Before we continue to two dimensions, we spend a little effort to make precise the phrase ``freely generated as a symmetric monoidal category by (something)''. 
As our run-along example we take the 1-dimensional TQFTs from above. 
In particular, we will see 
that $\Bord_1$ is freely generated as a symmetric monoidal category  by the objects 
\be\label{eq:bord1-G0}
\bullet_+ \, , \quad \bullet_-
\ee
and the morphisms
\be\label{eq:genrelex-mor}
\begin{tikzpicture}[very thick,scale=1.0,color=blue!50!black, baseline=.4cm]
\draw[line width=0pt] 
(3,0) node[line width=0pt] (D) {}
(2,0) node[line width=0pt] (s) {}; 
\draw[directed] (D) .. controls +(0,1) and +(0,1) .. (s);
\end{tikzpicture}
\, , \quad
\begin{tikzpicture}[very thick,scale=1.0,color=blue!50!black, baseline=-.5cm,rotate=180]
\draw[line width=0pt] 
(3,0) node[line width=0pt] (D) {}
(2,0) node[line width=0pt] (s) {}; 
\draw[redirected] (D) .. controls +(0,1) and +(0,1) .. (s);
\end{tikzpicture}
\ee
subject to the relations
\be\label{eq:genrelex-rel}
\tikzzbox{
\begin{tikzpicture}[very thick,scale=0.85,color=blue!50!black, baseline=0cm]
\draw[line width=0] 
(-1,1.25) node[line width=0pt] (A) {}
(1,-1.25) node[line width=0pt] (A2) {}; 
\draw[directed] (0,0) .. controls +(0,-1) and +(0,-1) .. (-1,0);
\draw[directed] (1,0) .. controls +(0,1) and +(0,1) .. (0,0);
\draw (-1,0) -- (A); 
\draw (1,0) -- (A2); 
\end{tikzpicture}
}
=
\tikzzbox{
\begin{tikzpicture}[very thick,scale=0.85,color=blue!50!black, baseline=0cm]
\draw[line width=0] 
(0,1.25) node[line width=0pt] (A) {}
(0,-1.25) node[line width=0pt] (A2) {}; 
\draw[
	decoration={markings, mark=at position 0.5 with {\arrow{>}}}, postaction={decorate}
	]
 (A2) -- (A); 
\end{tikzpicture}
}
\, , \quad 
\tikzzbox{
\begin{tikzpicture}[very thick,scale=0.85,color=blue!50!black, baseline=0cm]
\draw[line width=0] 
(1,1.25) node[line width=0pt] (A) {}
(-1,-1.25) node[line width=0pt] (A2) {}; 
\draw[directed] (0,0) .. controls +(0,1) and +(0,1) .. (-1,0);
\draw[directed] (1,0) .. controls +(0,-1) and +(0,-1) .. (0,0);
\draw (-1,0) -- (A2); 
\draw (1,0) -- (A); 
\end{tikzpicture}
}
=
\tikzzbox{
\begin{tikzpicture}[very thick,scale=0.85,color=blue!50!black, baseline=0cm]
\draw[line width=0] 
(0,1.25) node[line width=0pt] (A) {}
(0,-1.25) node[line width=0pt] (A2) {}; 
\draw[
	decoration={markings, mark=at position 0.5 with {\arrow{>}}}, postaction={decorate}
	]
(A) -- (A2); 
\end{tikzpicture}
}
\, . 
\ee

A good way to look at ``free somethings'' is via a universal property they satisfy (structural purist: ``Just say they are left adjoint to a forgetful functor'').

We should warn the reader that this section is the most dry and technical of these notes. Please feel free to just read the quick summary below and then jump ahead to Section \ref{subsec:2dTQFTs}.

\subsubsection*{A quick summary of the universal property}

Given a set of objects $G_0$, a set of morphisms $G_1$, and a set of relations $G_2$, e.\,g.\ as in \eqref{eq:bord1-G0}--\eqref{eq:genrelex-rel}, 
the outcome of the three-step construction below will be a symmetric monoidal category $\mathcal{F}(G_0,G_1,G_2)$ with the following universal property: 
\begin{quote}
Let $\mathcal{C}$ be a symmetric monoidal category. A symmetric monoidal functor $\mathcal{F}(G_0,G_1,G_2) \to \mathcal{C}$ is characterised uniquely (up to monoidal isomorphism) by 
choosing an object in $\mathcal{C}$ for each element in $G_0$ and a morphism  in $\mathcal{C}$ for each element in $G_1$ (with correct source and target) such that the relations posited in $G_2$ are satisfied.
\end{quote}
We will refer to $\mathcal{F}(G_0,G_1,G_2)$ as the \textsl{symmetric monoidal category freely generated by $G_0,G_1,G_2$}. It is unique up to natural monoidal isomorphism.

This concept becomes useful to TQFT if one can verify that a bordism category fulfills the above universal property for some $G_0,G_1,G_2$, i.\,e.\ if one may set $\Bord_n = \mathcal{F}(G_0,G_1,G_2)$. In this case one may specify symmetric monoidal functors $\Bord_n  \to \Vectk$ in the explicit way described above.

Indeed -- as we will see in more detail below -- Section \ref{sec:one-dim-class} shows that for $G_0,G_1,G_2$ as in \eqref{eq:bord1-G0}--\eqref{eq:genrelex-rel} we have $\Bord_1 = \mathcal{F}(G_0,G_1,G_2)$. A similar presentation with finite sets $G_0,G_1,G_2$ is possible for $\Bord_2$ (cf.~Section \ref{subsec:2dTQFTs}), but for $\Bord_n$ with $n >2$ these sets become infinite.

\subsubsection*{Step 1: Objects only}

We start with the free symmetric monoidal category $\mathcal{F}(G_0)$ whose objects are freely generated by a set $G_0$. 
We will think of $G_0$ interchangeably as a set or as a category whose objects are $G_0$ and which only has identity morphisms. 
Accordingly, we sometimes speak of functions out of $G_0$ and sometimes of functors.
	
Let $\mathcal{C}$ be an arbitrary symmetric monoidal category. 
Denote by $F^{G_0}(\mathcal{C})$ the category of functors from $G_0$ to $\mathcal{C}$. Explicitly, a functor $\Phi \in F^{G_0}(\mathcal{C})$ is just a function $G_0 \to \Obj(\mathcal{C})$ since for morphisms there is no freedom -- identity morphisms must be mapped to identity morphisms.

Given another symmetric monoidal category $\mathcal{C}'$, we can look at  symmetric monoidal functors $\Psi\colon \mathcal{C} \to \mathcal{C}'$ that are compatible with a choice of $\Phi \in F^{G_0}(\mathcal{C})$ and $\Phi' \in F^{G_0}(\mathcal{C}')$ in the sense that
\be
	\xymatrix{ & G_0 \ar[dl]_\Phi \ar[dr]^{\Phi'} & \\
	\mathcal{C} \ar[rr]_\Psi && \mathcal{C}'
	}
\ee
commutes up to natural isomorphism. In this setup, we may ask if there is a symmetric monoidal category $\mathcal{F}(G_0)$ together with a functor $I\colon G_0 \to \mathcal{F}(G_0)$ such that the pair $(\mathcal{F}(G_0),I)$ is universal in the following sense: 
for all symmetric monoidal categories $\mathcal{C}$, precomposing with $I$ induces an equivalence of categories
\be\label{eq:FG0-univ-prop}
\mathrm{Fun}_{\otimes,\mathrm{sym}}(\mathcal{F}(G_0),\mathcal{C})
\xrightarrow{(-) \circ I}
F^{G_0}(\mathcal{C})
\; . 
\ee
We call the pair $(\mathcal{F}(G_0),I)$ the \textsl{free symmetric monoidal category generated by~$G_0$}.

An equivalent way of stating the universal property \eqref{eq:FG0-univ-prop}  is to impose the following two conditions on the pair $(\mathcal{F}(G_0),I)$, which have to hold for every symmetric monoidal category $\mathcal{C}$: 
\begin{enumerate}
\item 
We require that for each function $\Phi\colon G_0 \to \Obj(\mathcal{C})$
there exists a unique-up-to-natural-monoidal-isomorphism symmetric monoidal functor $\widetilde \Phi\colon \mathcal{F}(G_0) \to \mathcal{C}$ such that
\be
	\xymatrix{ & G_0 \ar[dl]_{I} \ar[dr]^\Phi & \\
	\mathcal{F}(G_0)\ar@{-->}[rr]_{\exists! \widetilde \Phi} && \mathcal{C}
	}
\ee
commutes up to natural isomorphism. 
\item
Let $\Phi,\Psi\colon G_0 \to \Obj(\mathcal{C})$ be two functions. Note that a natural transformation $\phi\colon \Phi \to \Psi$ is given by a collection $(\phi_x)_{x \in G_0}$ of morphisms $\phi_x\colon \Phi(x) \to \Psi(x)$ in $\mathcal{C}$ with no further conditions imposed. 

We require that for each collection of morphisms $(\phi_x\colon \Phi(x) \to \Psi(x))_{x \in G_0}$ in $\mathcal{C}$ there exists a unique natural monoidal transformations $\widetilde\phi\colon \widetilde \Phi \to \widetilde \Psi$ such that $\phi_x = \widetilde \phi_{I(x)}$ for all $x \in G_0$.
\end{enumerate}
In particular, condition (i) and (ii) imply that functors out of the free symmetric monoidal category $\mathcal{F}(G_0)$, as well as natural monoidal transformations between them, are uniquely determined by what they do on generating objects. 

Via the usual argument, any other pair $(\mathcal{F}',I')$ satisfying the universal property will be equivalent to 
$(\mathcal{F}(G_0),I)$ as a symmetric monoidal category via a unique-up-to-natural-monoidal-isomorphism equivalence compatible with $I,I'$.

\medskip 

One can easily 
write down a realisation of $\mathcal{F}(G_0)$ and $I$. Namely, as objects of $\mathcal{F}(G_0)$ take finite ordered lists $\boldsymbol{x} = (x_1,x_2,\dots, x_m)$ of elements of $G_0$, including the empty list. 
The functor $I$ maps $x \in G_0$ to the one-element list $(x)$. 
The tensor product is given on objects by concatenation of lists, and the tensor unit is the empty list. 
There are no morphisms between lists of different length. For two lists $\boldsymbol{x} = (x_1,x_2,\dots, x_m)$ and $\boldsymbol{y} = (y_1,y_2,\dots, y_m)$ of length $m$, the morphisms $\boldsymbol{x} \to \boldsymbol{y}$ are given by all permutations $\pi$ of $m$ elements such that $y_{\pi(i)} = x_i$ for all $i \in \{1,\dots,m\}$. 
The empty list only has the identity morphism. Composition of morphisms is composition of permutations.

\medskip

For the generators and relations description of 1-dimensional TQFT that we are after, one can take $G_0 = \{ \bullet_+ , \bullet_- \}$. Then, for example, there is only one morphism $(\bullet_+,\bullet_-) \to (\bullet_-,\bullet_+)$ in $\mathcal{F}(G_0)$, but there are two morphisms  $(\bullet_+,\bullet_+) \to (\bullet_+,\bullet_+)$.
By the universal property \eqref{eq:FG0-univ-prop}, giving a symmetric monoidal functor $\mathcal{F}(G_0) \to \Vectk$ amounts to picking vector spaces $U$ for $\bullet_+$ and $V$ for $\bullet_-$.

\subsubsection*{Step 2: Objects and morphisms} 

Having the symmetric monoidal category $\mathcal{F}(G_0)$ at our disposal, we can try to add a set $G_1$ of extra morphisms to $\mathcal{F}(G_0)$. More formally, pick a set $G_1$ and maps $s, t\colon G_1 \to \Obj(\mathcal{F}(G_0))$ (``source'' and ``target''). 

Let $\mathcal{C}$ be an arbitrary symmetric monoidal category. 
We embellish the functor category 
$\mathrm{Fun}_{\otimes,\mathrm{sym}}(\mathcal{F}(G_0),\mathcal{C})$ from above by including a choice of a morphism for each element of $G_1$. That is, we define a category $F^{G_0,G_1}(\mathcal{C})$ with
\begin{itemize}
\item \textsl{objects:} pairs $(\Phi,H)$, where $\Phi \in \mathrm{Fun}_{\otimes,\mathrm{sym}}(\mathcal{F}(G_0),\mathcal{C})$ and
$H$ is a map from $G_1$ into $\mathrm{Mor}(\mathcal{C})$ such that for $f \in G_1$, $H(f)$ has source $\Phi(s(f))$ and target $\Phi(t(f))$.
\item \textsl{morphisms $(\Phi,H) \to (\Phi',H')$:} monoidal transformations $\phi\colon \Phi \to \Phi'$ which make
\be\label{eq:freesmc-morph}
\xymatrix{
\Phi(s(f))  \ar[d]_{\phi_{s(f)}} \ar[r]^{H(f)} & \Phi(t(f)) \ar[d]^{\phi_{t(f)}}
\\
\Phi'(s(f)) \ar[r]_{H'(f)} & \Phi'(t(f)) 
}
\ee 
commute for each $f \in G_1$.
\end{itemize}

The \textsl{free symmetric monoidal category generated by $G_0$ and $G_1$} is the universal such category in the following sense: it is a symmetric monoidal category $\mathcal{F}(G_0,G_1)$ together with a pair $(J,j)$ of a symmetric monoidal functor $J\colon \mathcal{F}(G_0) \to \mathcal{F}(G_0,G_1)$ and a function $j\colon G_1 \to \mathrm{Mor}(\mathcal{F}(G_0,G_1))$ as above, such that for each symmetric monoidal category the functor
\be
	\mathrm{Fun}_{\otimes,\mathrm{sym}}\big( \,\mathcal{F}(G_0,G_1) \, , \,\mathcal{C} \,\big)
	\lra F^{G_0,G_1}(\mathcal{C})	
	\, , \quad
	\Phi \lmt (\Phi \circ J, \Phi \circ j)
\ee
is an equivalence of categories.

In particular, picking a symmetric monoidal functor out of $\mathcal{F}(G_0,G_1)$ is the same as picking an object in $\mathcal{C}$ for each element of $G_0$ and a morphism of $\mathcal{C}$ with the correct source and target for each element of $G_1$.
A monoidal natural transformation between two such functors $P,P'$ is a collection of morphisms $\phi_x\colon P(x) \to P'(x)$ in $\mathcal{C}$ such that \eqref{eq:freesmc-morph} (with $\Phi,H$ both replaced by $P$ and $\Phi',H'$ replaced by $P'$) commutes for all $f \in G_1$.

One can show that $\mathcal{F}(G_0,G_1)$ exists by writing down an explicit realisation, but we will not do this and content ourselves with being able to describe functors out of $\mathcal{F}(G_0,G_1)$.

\medskip

In our example of 1-dimensional TQFTs, $G_1$ is as in \eqref{eq:genrelex-mor} above, with $s,t$ given by 
{
$
s(
\begin{tikzpicture}[very thick, scale=0.39, color=blue!50!black, baseline=-0.25cm, rotate=180, >=stealth]
\draw[-{stealth[scale width=1.1]}] 
(2,0) .. controls +(0,1) and +(0,1) .. (3,0);
\end{tikzpicture} 
) 
= 
()
$, 
$t(
\begin{tikzpicture}[very thick, scale=0.39, color=blue!50!black, baseline=-0.25cm, rotate=180, >=stealth]
\draw[-{stealth[scale width=1.1]}] 
(2,0) .. controls +(0,1) and +(0,1) .. (3,0);
\end{tikzpicture} 
) 
= (\bullet_+,\bullet_-)$}, etc.
To give a symmetric monoidal functor $\mathcal{F}(G_0,G_1) \to \Vectk$ amounts to picking $U,V$ as in step 1, together with  morphisms $b\colon \Bbbk \to U \otimes V$ and $d\colon V \otimes U \to \Bbbk$. 
A monoidal natural transformation to another such functor with data $(U',V',b',d')$ amounts to choosing linear maps $f\colon U \to U'$, $g\colon V \to V'$ such that $(f \otimes g) \circ b = b'$ and $d' \circ (g \otimes f) = d$.
Note that at this point, $U,V$ do not have to be finite-dimensional. 
This will be enforced only by the relations we turn to next.

\subsubsection*{Step 3: Objects, morphisms and relations}
 
We follow the pattern set above. We already have $\mathcal{F}(G_0,G_1)$ at our disposal. 
Then the relations $G_2$ are a set of diagrams in $\mathcal{F}(G_0,G_1)$ which we would like to commute. 
We formalise this by saying that an element of $G_2$ is a pair $(f_1,f_2)$, where $f_1, f_2\colon \boldsymbol{x} \to \boldsymbol{y}$ are morphisms in $\mathcal{F}(G_0,G_1)$.

Let again $\mathcal{C}$ be an arbitrary symmetric monoidal category. We define the category $F^{G_0,G_1,G_2}(\mathcal{C})$ to be the full subcategory of 
$\mathrm{Fun}_{\otimes,\mathrm{sym}}\big( \mathcal{F}(G_0,G_1)  , \,\mathcal{C} \big)$ whose objects are those symmetric monoidal functors $F$
which satisfy $F(f_1) = F(f_2)$ for each pair $(f_1,f_2) \in G_2$. Finally, we can state:

\begin{definition}
\label{def:freesymmon}
A \textsl{symmetric monoidal category freely generated by objects $G_0$, morphisms $G_1$ and relations $G_2$} is 
\begin{itemize}
\item
a symmetric monoidal category $\mathcal{F}$, 
\item
a symmetric monoidal functor $S\colon  \mathcal{F}(G_0,G_1)  \to \mathcal{F}$ such that $S(f_1) = S(f_2)$ for each pair $(f_1,f_2) \in G_2$,
\end{itemize}
such that for each symmetric monoidal category $\mathcal{C}$, the functor
\be
	\mathrm{Fun}_{\otimes,\mathrm{sym}}( \mathcal{F}  , \mathcal{C} )
	\xrightarrow{~(-) \circ S~} F^{G_0,G_1,G_2}(\mathcal{C})
\ee
is an equivalence of categories.
We denote such a category by $\mathcal{F}(G_0,G_1,G_2) := \mathcal{F}$.
\end{definition}

In particular, a symmetric monoidal functor out of $\mathcal{F}(G_0,G_1,G_2)$ is determined uniquely up to natural monoidal isomorphism by a choice of object for each element of $G_0$ and a choice of morphism with correct source and target for each element of $G_1$, all subject to the condition that the diagrams from $G_2$ commute.
The data determining a monoidal natural transformation between two such functors is the same as in step 2.

\medskip

In our example of 1-dimensional TQFTs, $G_2$ is the two-element set from \eqref{eq:genrelex-rel} above. 
To give a symmetric monoidal functor $\mathcal{F}(G_0,G_1,G_2) \to \Vectk$ then amounts to picking $(U,V,b,d)$ as above, but now subject to the relations
\be 
	(d \otimes \id) \circ (\id \otimes b) = \id
	\, , \quad
	(\id \otimes d) \circ (b \otimes \id)  = \id \, .
\ee 
We note that this precisely describes an object of $\mathcal{DP}_\Bbbk$. 
In step 2 we already saw that a natural monoidal transformation between two such functors is the same as giving a morphism in $\mathcal{DP}_\Bbbk$.
We obtain an equivalence of categories
\be\label{eq:1dim-genrel-dualpair}
\mathrm{Fun}_{\otimes,\mathrm{sym}}\big( \,\mathcal{F}(G_0,G_1,G_2) \, , \,\Vectk \big) \lra \mathcal{DP}_\Bbbk \, .
\ee
This equivalence also re-establishes that $\mathcal{DP}_\Bbbk$ is a groupoid: $\bullet_+$ is left dual to $\bullet_-$ in $\mathcal{F}(G_0,G_1,G_2)$, hence by Lemma~\ref{lem:sym-mon-inv} the left-hand side of the above equivalence is a groupoid, and thus also the right-hand side.

\medskip

A systematic approach to this type of description of freely generated categories with relations which generalises to higher categories can be found in \cite[Sect.\,2.10]{spthesis}.

\subsubsection*{Algebraic description of TQFTs via generators and relations}

With the help our new toy -- freely generated symmetric monoidal categories -- we can say more precisely what we mean by a description of TQFTs via generators and relations.

Namely, suppose we found sets $G_0,G_1,G_2$ such that $\Bord_n$ satisfies the universal property of $\mathcal{F}(G_0,G_1,G_2)$. In other words, we may just take $\mathcal{F}(G_0,G_1,G_2) = \Bord_n$. 
Then our task is to find a nice algebraic structure whose category is equivalent to $\mathrm{Fun}_{\otimes,\mathrm{sym}}( \mathcal{F}(G_0,G_1,G_2) , \Vectk)$. This is then an algebraic description of $n$-dimensional TQFTs. Of course, the generators and relations description is of most use if all three sets $G_0$, $G_1$, $G_2$ are finite.

\medskip

In our 1-dimensional example, the discussion in Section \ref{sec:one-dim-class} can be reformulated as follows. First one rewrites the proof of the preliminary version of Theorem \ref{thm:1dTQFTprelim} as:

\begin{theorem}
$\Bord_1$ is freely generated as a symmetric monoidal category by objects $\bullet_+$, $\bullet_-$, morphisms \eqref{eq:genrelex-mor} and relations \eqref{eq:genrelex-rel}.
\end{theorem}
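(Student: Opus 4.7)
The plan is to build a symmetric monoidal functor $S\colon \mathcal{F}(G_0, G_1, G_2) \to \Bord_1$ and prove it is an equivalence of symmetric monoidal categories, with $G_0 = \{\bullet_+, \bullet_-\}$, $G_1$ the two generators of \eqref{eq:genrelex-mor}, and $G_2$ the Zorro moves of \eqref{eq:genrelex-rel}. By Definition~\ref{def:freesymmon}, this is enough because both categories then satisfy the same universal property.

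The construction of $S$ is a direct application of Definition~\ref{def:freesymmon}: sending $\bullet_\pm$ to the positively/negatively oriented points in $\Bord_1$ and each morphism of $G_1$ to the corresponding cap/cup bordism specifies a symmetric monoidal functor out of $\mathcal{F}(G_0, G_1)$, and an immediate pictorial check (each side of \eqref{eq:genrelex-rel} is diffeomorphic to an oriented interval) shows that the Zorro relations hold in $\Bord_1$, so the functor descends to $S\colon \mathcal{F}(G_0, G_1, G_2) \to \Bord_1$.

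Essential surjectivity on objects is automatic since every object of $\Bord_1$ is a finite disjoint union of signed points. For fullness, I would use Morse theory: given any bordism $M\colon E \to F$, pick a Morse function $h\colon M \to [0,1]$ that restricts to the constants $0$ and $1$ on $\overline E$ and $F$ respectively and whose critical values are distinct. Cutting $M$ along regular level sets between consecutive critical values presents $M$ as a composition of elementary pieces, each of which is either a disjoint union of identity intervals (with horizontal reorderings encoded by braidings, already present in any symmetric monoidal category), a cup near a local minimum of $h$, or a cap near a local maximum. This decomposition realises $M$ as the $S$-image of a morphism in $\mathcal{F}(G_0, G_1, G_2)$, using only the two explicit generators in $G_1$ composed with braidings (this also absorbs the two extra generators of \eqref{eq:1dgens}, each of which is a cap/cup precomposed or postcomposed with a braiding).

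The main obstacle is faithfulness: two different Morse decompositions of the same bordism should yield equal morphisms in $\mathcal{F}(G_0, G_1, G_2)$. Here I would invoke elementary Cerf theory. The space of smooth functions on $M$ satisfying the boundary conditions is path-connected, and any two Morse representatives can be joined by a generic path passing through only finitely many non-Morse functions of two standard types, namely (a) the birth or death of a local-maximum/local-minimum pair, and (b) the exchange of two critical values. Move (a) corresponds precisely to one of the Zorro moves in $G_2$ (or to a reoriented Zorro move, which follows from those in $G_2$ by composing with appropriate braidings), while move (b) corresponds to the interchange law $(f \otimes \id)\circ(\id \otimes g) = (\id \otimes g)\circ(f \otimes \id)$ together with naturality of the braiding, both of which hold automatically in any symmetric monoidal category. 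The bookkeeping to match each Cerf move with an equality derivable in $\mathcal{F}(G_0, G_1, G_2)$ is the delicate part, but in dimension one the combinatorics is simple enough that it can be carried out explicitly.
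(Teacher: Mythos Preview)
Your approach is correct and is precisely the standard Morse/Cerf argument that the paper gestures at but does not spell out: the paper treats this theorem as a reformulation of the discussion in Section~\ref{sec:one-dim-class}, where it simply records (without detailed proof) that Morse theory provides the generators~\eqref{eq:1dgens} and that the relations~\eqref{eq:1rel} suffice. Your proposal is more explicit than the paper on two points---the reduction from four generators/relations to two via the symmetric braiding, and the use of Cerf moves to establish faithfulness---both of which are handled correctly.
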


Then the observation \eqref{eq:1dim-genrel-dualpair} amounts to the statement of the final version of Theorem \ref{thm:1dTQFTprelim}.

\subsection{Two-dimensional TQFTs}
\label{subsec:2dTQFTs}

We now move to 2-dimensional TQFTs. 
Here the analogue of Theorem~\ref{thm:1dTQFT} reads as follows: 
\begin{theorem}
\label{thm:2dTQFT}
There is an equivalence of groupoids 
\be 
\big\{ \textrm{TQFTs } \Bord_2 \lra \Vectk \!\big\}
\stackrel{\sim}{\lra}
\textrm{comFrob}_\Bbbk
\ee 
given on objects by $\zz \mapsto \zz(S^1)$. 
\end{theorem}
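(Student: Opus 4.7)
My plan is to mirror the one-dimensional story of Section~\ref{sec:one-dim-class}, using the generators-and-relations language of Section~\ref{sec:gen+rel_def}. The key input is a presentation of $\Bord_2$ as a freely generated symmetric monoidal category:

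\textbf{Step 1 (Presentation of $\Bord_2$).} I will claim that $\Bord_2$ is freely generated as a symmetric monoidal category by one object $G_0 = \{S^1\}$, by six morphisms $G_1$ consisting of the unit disk $u\colon \emptyset \to S^1$, the counit disk $\varepsilon\colon S^1 \to \emptyset$, the pair of pants $\mu\colon S^1 \sqcup S^1 \to S^1$, its reverse $\Delta\colon S^1 \to S^1 \sqcup S^1$, and (for good measure) the cylinders realising the self-duality of $S^1$, and by the relations $G_2$ expressing (co)associativity, (co)unitality, (co)commutativity, and the Frobenius compatibility $(\mu \otimes \id) \circ (\id \otimes \Delta) = \Delta \circ \mu = (\id \otimes \mu) \circ (\Delta \otimes \id)$. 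Each of these relations holds in $\Bord_2$ because both sides are diffeomorphic surfaces with the correct boundary parametrisation (e.g.\ a genus-$0$ surface with three in- and one out-boundary for associativity). Conversely, that every oriented bordism between disjoint unions of circles is generated by these pieces modulo these relations is the main technical input; I would cite Morse theory / handle decomposition of surfaces (see e.\,g.~\cite{Kockbook}). This is the step I expect to be the main obstacle, and I would only sketch it by choosing a self-indexing Morse function and translating the changes between level sets into the generators, then verifying that moves between different Morse functions produce the listed relations.

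\textbf{Step 2 (From TQFTs to Frobenius algebras).} Granting Step 1, the universal property of $\mathcal{F}(G_0,G_1,G_2)$ established in Definition~\ref{def:freesymmon} yields an equivalence of groupoids
\be
\mathrm{Fun}_{\otimes,\mathrm{sym}}(\Bord_2,\Vectk) \;\simeq\; F^{G_0,G_1,G_2}(\Vectk).
\ee
Unpacking the right-hand side: an object is a vector space $A = \zz(S^1)$ equipped with linear maps $\mu,\Delta,\eta,\varepsilon$ of the prescribed source and target that satisfy the relations of $G_2$. This is exactly the data and axioms of a commutative Frobenius algebra (with the self-duality cylinders giving the nondegenerate pairing $\varepsilon \circ \mu$, whose nondegeneracy follows from Proposition~\ref{prop:findim}); a monoidal natural transformation between two such functors is, by Step~2 of Section~\ref{sec:gen+rel_def}, a linear map $A \to A'$ intertwining all of $\mu,\Delta,\eta,\varepsilon$, i.\,e.\ a Frobenius algebra homomorphism. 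Thus the functor on the right identifies $F^{G_0,G_1,G_2}(\Vectk)$ with the category $\mathrm{comFrob}_\Bbbk$.

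\textbf{Step 3 (Recovering the stated form of the functor).} Composing the two equivalences gives an equivalence of groupoids $\{\text{2d TQFTs}\} \simeq \mathrm{comFrob}_\Bbbk$. It remains to check that on objects this equivalence sends $\zz$ to the Frobenius algebra whose underlying vector space is $\zz(S^1)$, which is immediate because $S^1$ is the unique generating object. Finally, both sides are automatically groupoids: the left-hand side by the lemma at the end of Section~\ref{sec:compareTQFT}, and the right-hand side because any homomorphism of Frobenius algebras is automatically invertible (it intertwines the nondegenerate pairings, so its underlying linear map is injective, hence bijective on finite-dimensional spaces of equal dimension). This completes the proof modulo the presentation result of Step~1, which is the only genuinely nontrivial ingredient.
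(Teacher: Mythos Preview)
Your proposal is correct and follows the same route as the paper: establish a generators-and-relations presentation of $\Bord_2$ (the paper's Theorem~\ref{thm:Bord2-freegen}), then unpack the universal property of $\mathcal{F}(G_0,G_1,G_2)$ to identify $\mathrm{Fun}_{\otimes,\mathrm{sym}}(\Bord_2,\Vectk)$ with $\textrm{comFrob}_\Bbbk$ (the paper's Proposition~\ref{prop:genrel-cFA}). One minor correction: drop the self-duality cylinders from $G_1$, since they are already the composites $\varepsilon\circ\mu$ and $\Delta\circ\eta$ of your other generators---the paper's $G_1$ contains only $\mu,\Delta,\eta,\varepsilon$, and adding the bent cylinders as independent generators without relations identifying them with those composites would make $\mathcal{F}(G_0,G_1,G_2)$ strictly larger than $\Bord_2$.
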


We will solve the mystery of what $\textrm{comFrob}_\Bbbk$ is only while proving the above classification result. 
As in the 1-dimensional case we start with the generators of the bordism category. 
By definition, an object in $\Bord_2$ 
is orientation-preserving diffeomorphic to a finite disjoint union of (oriented) $S^1$s. 
Put differently, 
every object in $\Bord_2$ is isomorphic to a finite tensor product of $S^1$s, 
so in the notation of Section~\ref{sec:gen+rel_def} we have 
\be
G_0= \big\{ S^1 \big\} \, . 
\ee

It is a classical result (which may be proven with or without Morse theory \cite[Sect.\,1.4]{Kockbook}) 
that the morphisms of $\Bord_2$ can be obtained by composing and tensoring the elementary bordisms
\be\label{eq:Bord2gen}
\tikzzbox
{
\begin{tikzpicture}[thick,scale=1.0,color=black, baseline=0.3cm]
\coordinate (p1) at (-0.55,0);
\coordinate (p2) at (-0.2,0);
\coordinate (p3) at (-0.55,0.8);
\coordinate (p4) at (-0.2,0.8);
%
\fill [orange!23] 
(p1) -- (p2)
-- (p4)
.. controls +(0,0.1) and +(0,0.1) .. (p3)
-- (p1)
;
%
\fill [orange!38] 
(p1) .. controls +(0,0.1) and +(0,0.1) ..  (p2) -- 
(p2) .. controls +(0,-0.1) and +(0,-0.1) ..  (p1)
;
\draw (p2) -- (p4); 
\draw (p3) -- (p1); 
\draw[very thick, color=red!80!black] (p3) .. controls +(0,0.1) and +(0,0.1) ..  (p4); 
\draw[very thick, color=red!80!black, opacity=0.2] (p3) .. controls +(0,-0.1) and +(0,-0.1) ..  (p4); 
\draw[very thick, color=red!80!black] (p1) .. controls +(0,0.1) and +(0,0.1) ..  (p2); 
\draw[very thick, color=red!80!black] (p1) .. controls +(0,-0.1) and +(0,-0.1) ..  (p2); 
\end{tikzpicture}
} 
\, , \quad
\tikzzbox
{
\begin{tikzpicture}[thick,scale=1.0,color=black, baseline=0.3cm]
\coordinate (p1) at (-0.55,0);
\coordinate (p2) at (-0.2,0);
\coordinate (p3) at (0.2,0);
\coordinate (p4) at (0.55,0);
\coordinate (p5) at (0.175,0.8);
\coordinate (p6) at (-0.175,0.8);
%
\fill [orange!23] 
(p1) .. controls +(0,-0.15) and +(0,-0.15) ..  (p2)
-- (p2) .. controls +(0,0.35) and +(0,0.35) ..  (p3)
-- (p3) .. controls +(0,-0.15) and +(0,-0.15) ..  (p4)
-- (p4) .. controls +(0,0.5) and +(0,-0.5) ..  (p5)
-- (p5) .. controls +(0,0.15) and +(0,0.15) ..  (p6)
-- (p6) .. controls +(0,-0.5) and +(0,0.5) ..  (p1)
;
\fill [orange!38] 
(p1) .. controls +(0,-0.15) and +(0,-0.15) ..  (p2)
-- (p2) .. controls +(0,0.15) and +(0,0.15) ..  (p1)
;
\fill [orange!38] 
(p3) .. controls +(0,-0.15) and +(0,-0.15) ..  (p4)
-- (p4) .. controls +(0,0.15) and +(0,0.15) ..  (p3)
;
\draw (p2) .. controls +(0,0.35) and +(0,0.35) ..  (p3); 
\draw (p4) .. controls +(0,0.5) and +(0,-0.5) ..  (p5); 
\draw (p6) .. controls +(0,-0.5) and +(0,0.5) ..  (p1); 
\draw[very thick, red!80!black] (p1) .. controls +(0,0.15) and +(0,0.15) ..  (p2); 
\draw[very thick, red!80!black] (p1) .. controls +(0,-0.15) and +(0,-0.15) ..  (p2); 
\draw[very thick, red!80!black] (p3) .. controls +(0,0.15) and +(0,0.15) ..  (p4); 
\draw[very thick, red!80!black] (p3) .. controls +(0,-0.15) and +(0,-0.15) ..  (p4); 
\draw[very thick, red!80!black] (p5) .. controls +(0,0.15) and +(0,0.15) ..  (p6); 
\draw[very thick, red!80!black, opacity=0.2] (p5) .. controls +(0,-0.15) and +(0,-0.15) ..  (p6); 
\end{tikzpicture}
} 
\, , \quad
\tikzzbox
{
\begin{tikzpicture}[thick,scale=1.0,color=black, rotate=180, baseline=-0.5cm]
\coordinate (p1) at (-0.55,0);
\coordinate (p2) at (-0.2,0);
\coordinate (p3) at (0.2,0);
\coordinate (p4) at (0.55,0);
\coordinate (p5) at (0.175,0.8);
\coordinate (p6) at (-0.175,0.8);
%
\fill [orange!23] 
(p1) .. controls +(0,-0.15) and +(0,-0.15) ..  (p2)
-- (p2) .. controls +(0,0.35) and +(0,0.35) ..  (p3)
-- (p3) .. controls +(0,-0.15) and +(0,-0.15) ..  (p4)
-- (p4) .. controls +(0,0.5) and +(0,-0.5) ..  (p5)
-- (p5) .. controls +(0,0.15) and +(0,0.15) ..  (p6)
-- (p6) .. controls +(0,-0.5) and +(0,0.5) ..  (p1)
;
\fill [orange!38] 
(p6) .. controls +(0,-0.15) and +(0,-0.15) ..  (p5)
-- (p5) .. controls +(0,0.15) and +(0,0.15) ..  (p6)
;
\draw (p2) .. controls +(0,0.35) and +(0,0.35) ..  (p3); 
\draw (p4) .. controls +(0,0.5) and +(0,-0.5) ..  (p5); 
\draw (p6) .. controls +(0,-0.5) and +(0,0.5) ..  (p1); 
\draw[very thick, red!80!black, opacity=0.2] (p1) .. controls +(0,0.15) and +(0,0.15) ..  (p2); 
\draw[very thick, red!80!black] (p1) .. controls +(0,-0.15) and +(0,-0.15) ..  (p2); 
\draw[very thick, red!80!black, opacity=0.2] (p3) .. controls +(0,0.15) and +(0,0.15) ..  (p4); 
\draw[very thick, red!80!black] (p3) .. controls +(0,-0.15) and +(0,-0.15) ..  (p4); 
\draw[very thick, red!80!black] (p5) .. controls +(0,0.15) and +(0,0.15) ..  (p6); 
\draw[very thick, red!80!black] (p5) .. controls +(0,-0.15) and +(0,-0.15) ..  (p6); 
\end{tikzpicture}
} 
\, , \quad 
\tikzzbox
{
\begin{tikzpicture}[thick,scale=1.0,color=black, baseline=-0.15cm]
\coordinate (p1) at (-0.55,0);
\coordinate (p2) at (-0.2,0);
%
\fill [orange!23] 
(p1) .. controls +(0,0.15) and +(0,0.15) ..  (p2)
-- (p2) .. controls +(0,-0.4) and +(0,-0.4) ..  (p1)
;
\draw (p1) .. controls +(0,-0.4) and +(0,-0.4) ..  (p2); 
\draw[very thick, red!80!black] (p1) .. controls +(0,0.15) and +(0,0.15) ..  (p2); 
\draw[very thick, red!80!black, opacity=0.2] (p1) .. controls +(0,-0.15) and +(0,-0.15) ..  (p2); 
\end{tikzpicture}
} 
\, , \quad
\tikzzbox
{
\begin{tikzpicture}[thick,scale=1.0,color=black, rotate=180, baseline=0.1cm]
\coordinate (p1) at (-0.55,0);
\coordinate (p2) at (-0.2,0);
%
\fill [orange!23] 
(p1) .. controls +(0,0.15) and +(0,0.15) ..  (p2)
-- (p2) .. controls +(0,-0.4) and +(0,-0.4) ..  (p1)
;
\fill [orange!38] 
(p1) .. controls +(0,-0.15) and +(0,-0.15) ..  (p2)
-- (p2) .. controls +(0,0.15) and +(0,0.15) ..  (p1)
;
\draw (p1) .. controls +(0,-0.4) and +(0,-0.4) ..  (p2); 
\draw[very thick, red!80!black] (p1) .. controls +(0,0.15) and +(0,0.15) ..  (p2); 
\draw[very thick, red!80!black] (p1) .. controls +(0,-0.15) and +(0,-0.15) ..  (p2); 
\end{tikzpicture}
} 
\, , \quad
\tikzzbox
{
\begin{tikzpicture}[thick,scale=1.0,color=black, baseline=0.3cm]
\coordinate (p1) at (-0.55,0);
\coordinate (p2) at (-0.2,0);
\coordinate (p3) at (0.2,0);
\coordinate (p4) at (0.55,0);
\coordinate (p5) at (-0.55,0.8);
\coordinate (p6) at (-0.2,0.8);
\coordinate (p7) at (0.2,0.8);
\coordinate (p8) at (0.55,0.8);
%
\fill [orange!23] 
(p1) .. controls +(0,-0.15) and +(0,-0.15) ..  (p2)
-- (p2) -- (p8)
-- (p8) .. controls +(0,0.15) and +(0,0.15) ..  (p7)
-- (p7) -- (p1)
;
\fill [orange!23] 
(p3) .. controls +(0,-0.15) and +(0,-0.15) ..  (p4)
-- (p4) -- (p6)
-- (p6) .. controls +(0,0.15) and +(0,0.15) ..  (p5)
-- (p5) -- (p3)
;
\fill [orange!38] 
(p1) .. controls +(0,-0.15) and +(0,-0.15) ..  (p2)
-- (p2) .. controls +(0,0.15) and +(0,0.15) ..  (p1)
;
\fill [orange!38] 
(p3) .. controls +(0,-0.15) and +(0,-0.15) ..  (p4)
-- (p4) .. controls +(0,0.15) and +(0,0.15) ..  (p3)
;
\draw (p1) -- (p7);
\draw (p2) -- (p8);
\draw (p3) -- (p5);
\draw (p4) -- (p6);
\draw[very thick, red!80!black] (p1) .. controls +(0,0.15) and +(0,0.15) ..  (p2); 
\draw[very thick, red!80!black] (p1) .. controls +(0,-0.15) and +(0,-0.15) ..  (p2); 
\draw[very thick, red!80!black] (p3) .. controls +(0,0.15) and +(0,0.15) ..  (p4); 
\draw[very thick, red!80!black] (p3) .. controls +(0,-0.15) and +(0,-0.15) ..  (p4); 
\draw[very thick, red!80!black] (p5) .. controls +(0,0.15) and +(0,0.15) ..  (p6); 
\draw[very thick, red!80!black, opacity=0.2] (p5) .. controls +(0,-0.15) and +(0,-0.15) ..  (p6); 
\draw[very thick, red!80!black] (p7) .. controls +(0,0.15) and +(0,0.15) ..  (p8); 
\draw[very thick, red!80!black, opacity=0.2] (p7) .. controls +(0,-0.15) and +(0,-0.15) ..  (p8); 
\end{tikzpicture}
\, , 
} 
\ee
where 
$
\tikzzbox
{
\begin{tikzpicture}[thick,scale=0.45,color=black, baseline=0cm]
\coordinate (p1) at (-0.55,0);
\coordinate (p2) at (-0.2,0);
\coordinate (p3) at (-0.55,0.8);
\coordinate (p4) at (-0.2,0.8);
%
\fill [orange!23] 
(p1) -- (p2)
-- (p4)
.. controls +(0,0.1) and +(0,0.1) .. (p3)
-- (p1)
;
%
\fill [orange!38] 
(p1) .. controls +(0,0.1) and +(0,0.1) ..  (p2) -- 
(p2) .. controls +(0,-0.1) and +(0,-0.1) ..  (p1)
;
\draw (p2) -- (p4); 
\draw (p3) -- (p1); 
\draw[very thick, color=red!80!black] (p3) .. controls +(0,0.1) and +(0,0.1) ..  (p4); 
\draw[very thick, color=red!80!black, opacity=0.2] (p3) .. controls +(0,-0.1) and +(0,-0.1) ..  (p4); 
\draw[very thick, color=red!80!black] (p1) .. controls +(0,0.1) and +(0,0.1) ..  (p2); 
\draw[very thick, color=red!80!black] (p1) .. controls +(0,-0.1) and +(0,-0.1) ..  (p2); 
\end{tikzpicture}
} 
$
represents the unit morphism $1_{S^1}$ and
$
\begin{tikzpicture}[thick,scale=0.45,color=black, baseline=-0cm]
\coordinate (p1) at (-0.55,0);
\coordinate (p2) at (-0.2,0);
\coordinate (p3) at (0.2,0);
\coordinate (p4) at (0.55,0);
\coordinate (p5) at (-0.55,0.8);
\coordinate (p6) at (-0.2,0.8);
\coordinate (p7) at (0.2,0.8);
\coordinate (p8) at (0.55,0.8);
%
\fill [orange!23] 
(p1) .. controls +(0,-0.15) and +(0,-0.15) ..  (p2)
-- (p2) -- (p8)
-- (p8) .. controls +(0,0.15) and +(0,0.15) ..  (p7)
-- (p7) -- (p1)
;
\fill [orange!23] 
(p3) .. controls +(0,-0.15) and +(0,-0.15) ..  (p4)
-- (p4) -- (p6)
-- (p6) .. controls +(0,0.15) and +(0,0.15) ..  (p5)
-- (p5) -- (p3)
;
\fill [orange!38] 
(p1) .. controls +(0,-0.15) and +(0,-0.15) ..  (p2)
-- (p2) .. controls +(0,0.15) and +(0,0.15) ..  (p1)
;
\fill [orange!38] 
(p3) .. controls +(0,-0.15) and +(0,-0.15) ..  (p4)
-- (p4) .. controls +(0,0.15) and +(0,0.15) ..  (p3)
;
\draw (p1) -- (p7);
\draw (p2) -- (p8);
\draw (p3) -- (p5);
\draw (p4) -- (p6);
\draw[very thick, red!80!black] (p1) .. controls +(0,0.15) and +(0,0.15) ..  (p2); 
\draw[very thick, red!80!black] (p1) .. controls +(0,-0.15) and +(0,-0.15) ..  (p2); 
\draw[very thick, red!80!black] (p3) .. controls +(0,0.15) and +(0,0.15) ..  (p4); 
\draw[very thick, red!80!black] (p3) .. controls +(0,-0.15) and +(0,-0.15) ..  (p4); 
\draw[very thick, red!80!black] (p5) .. controls +(0,0.15) and +(0,0.15) ..  (p6); 
\draw[very thick, red!80!black, opacity=0.2] (p5) .. controls +(0,-0.15) and +(0,-0.15) ..  (p6); 
\draw[very thick, red!80!black] (p7) .. controls +(0,0.15) and +(0,0.15) ..  (p8); 
\draw[very thick, red!80!black, opacity=0.2] (p7) .. controls +(0,-0.15) and +(0,-0.15) ..  (p8); 
\end{tikzpicture}
$
represents the 
braiding bordism $\beta_{S^1,S^1}$ of~\eqref{eq:Bordbraid}. 
Every surface can be chopped into the above pieces. 

Since $\Bord_2$ is a \textsl{symmetric} monoidal category which we hope to describe as freely generated by some generators and relations, we can drop the unit and braiding bordisms from the set of morphism generators. 
Hence we set 
\be
\label{eq:Bord2gen2}
G_1 
= 
\bigg\{ \;
\tikzzbox
{
\begin{tikzpicture}[thick,scale=1.0,color=black, baseline=0.3cm]
\coordinate (p1) at (-0.55,0);
\coordinate (p2) at (-0.2,0);
\coordinate (p3) at (0.2,0);
\coordinate (p4) at (0.55,0);
\coordinate (p5) at (0.175,0.8);
\coordinate (p6) at (-0.175,0.8);
%
\fill [orange!23] 
(p1) .. controls +(0,-0.15) and +(0,-0.15) ..  (p2)
-- (p2) .. controls +(0,0.35) and +(0,0.35) ..  (p3)
-- (p3) .. controls +(0,-0.15) and +(0,-0.15) ..  (p4)
-- (p4) .. controls +(0,0.5) and +(0,-0.5) ..  (p5)
-- (p5) .. controls +(0,0.15) and +(0,0.15) ..  (p6)
-- (p6) .. controls +(0,-0.5) and +(0,0.5) ..  (p1)
;
\fill [orange!38] 
(p1) .. controls +(0,-0.15) and +(0,-0.15) ..  (p2)
-- (p2) .. controls +(0,0.15) and +(0,0.15) ..  (p1)
;
\fill [orange!38] 
(p3) .. controls +(0,-0.15) and +(0,-0.15) ..  (p4)
-- (p4) .. controls +(0,0.15) and +(0,0.15) ..  (p3)
;
\draw (p2) .. controls +(0,0.35) and +(0,0.35) ..  (p3); 
\draw (p4) .. controls +(0,0.5) and +(0,-0.5) ..  (p5); 
\draw (p6) .. controls +(0,-0.5) and +(0,0.5) ..  (p1); 
\draw[very thick, red!80!black] (p1) .. controls +(0,0.15) and +(0,0.15) ..  (p2); 
\draw[very thick, red!80!black] (p1) .. controls +(0,-0.15) and +(0,-0.15) ..  (p2); 
\draw[very thick, red!80!black] (p3) .. controls +(0,0.15) and +(0,0.15) ..  (p4); 
\draw[very thick, red!80!black] (p3) .. controls +(0,-0.15) and +(0,-0.15) ..  (p4); 
\draw[very thick, red!80!black] (p5) .. controls +(0,0.15) and +(0,0.15) ..  (p6); 
\draw[very thick, red!80!black, opacity=0.2] (p5) .. controls +(0,-0.15) and +(0,-0.15) ..  (p6); 
\end{tikzpicture}
} 
\, , \quad
\tikzzbox
{
\begin{tikzpicture}[thick,scale=1.0,color=black, rotate=180, baseline=-0.5cm]
\coordinate (p1) at (-0.55,0);
\coordinate (p2) at (-0.2,0);
\coordinate (p3) at (0.2,0);
\coordinate (p4) at (0.55,0);
\coordinate (p5) at (0.175,0.8);
\coordinate (p6) at (-0.175,0.8);
%
\fill [orange!23] 
(p1) .. controls +(0,-0.15) and +(0,-0.15) ..  (p2)
-- (p2) .. controls +(0,0.35) and +(0,0.35) ..  (p3)
-- (p3) .. controls +(0,-0.15) and +(0,-0.15) ..  (p4)
-- (p4) .. controls +(0,0.5) and +(0,-0.5) ..  (p5)
-- (p5) .. controls +(0,0.15) and +(0,0.15) ..  (p6)
-- (p6) .. controls +(0,-0.5) and +(0,0.5) ..  (p1)
;
\fill [orange!38] 
(p6) .. controls +(0,-0.15) and +(0,-0.15) ..  (p5)
-- (p5) .. controls +(0,0.15) and +(0,0.15) ..  (p6)
;
\draw (p2) .. controls +(0,0.35) and +(0,0.35) ..  (p3); 
\draw (p4) .. controls +(0,0.5) and +(0,-0.5) ..  (p5); 
\draw (p6) .. controls +(0,-0.5) and +(0,0.5) ..  (p1); 
\draw[very thick, red!80!black, opacity=0.2] (p1) .. controls +(0,0.15) and +(0,0.15) ..  (p2); 
\draw[very thick, red!80!black] (p1) .. controls +(0,-0.15) and +(0,-0.15) ..  (p2); 
\draw[very thick, red!80!black, opacity=0.2] (p3) .. controls +(0,0.15) and +(0,0.15) ..  (p4); 
\draw[very thick, red!80!black] (p3) .. controls +(0,-0.15) and +(0,-0.15) ..  (p4); 
\draw[very thick, red!80!black] (p5) .. controls +(0,0.15) and +(0,0.15) ..  (p6); 
\draw[very thick, red!80!black] (p5) .. controls +(0,-0.15) and +(0,-0.15) ..  (p6); 
\end{tikzpicture}
} 
\, , \quad 
\tikzzbox
{
\begin{tikzpicture}[thick,scale=1.0,color=black, baseline=-0.15cm]
\coordinate (p1) at (-0.55,0);
\coordinate (p2) at (-0.2,0);
%
\fill [orange!23] 
(p1) .. controls +(0,0.15) and +(0,0.15) ..  (p2)
-- (p2) .. controls +(0,-0.4) and +(0,-0.4) ..  (p1)
;
\draw (p1) .. controls +(0,-0.4) and +(0,-0.4) ..  (p2); 
\draw[very thick, red!80!black] (p1) .. controls +(0,0.15) and +(0,0.15) ..  (p2); 
\draw[very thick, red!80!black, opacity=0.2] (p1) .. controls +(0,-0.15) and +(0,-0.15) ..  (p2); 
\end{tikzpicture}
} 
\, , \quad
\tikzzbox
{
\begin{tikzpicture}[thick,scale=1.0,color=black, rotate=180, baseline=0.1cm]
\coordinate (p1) at (-0.55,0);
\coordinate (p2) at (-0.2,0);
%
\fill [orange!23] 
(p1) .. controls +(0,0.15) and +(0,0.15) ..  (p2)
-- (p2) .. controls +(0,-0.4) and +(0,-0.4) ..  (p1)
;
\fill [orange!38] 
(p1) .. controls +(0,-0.15) and +(0,-0.15) ..  (p2)
-- (p2) .. controls +(0,0.15) and +(0,0.15) ..  (p1)
;
\draw (p1) .. controls +(0,-0.4) and +(0,-0.4) ..  (p2); 
\draw[very thick, red!80!black] (p1) .. controls +(0,0.15) and +(0,0.15) ..  (p2); 
\draw[very thick, red!80!black] (p1) .. controls +(0,-0.15) and +(0,-0.15) ..  (p2); 
\end{tikzpicture}
} 
\;
\bigg\} 
\, . 
\ee
Not having to explicitly deal with the braiding -- being baked in via the description as a freely generated symmetric monoidal category -- also cuts down the relations~$G_2$ that we have to keep track of. 

Indeed, using Morse theory, one can further identify a sufficient set of relations among the bordisms in $G_1$:
\begin{align}
& 
\tikzzbox
{
\begin{tikzpicture}[thick,scale=1.0,color=black, baseline=0.75cm]
\coordinate (p1) at (-0.55,0);
\coordinate (p2) at (-0.2,0);
\coordinate (p3) at (0.2,0);
\coordinate (p4) at (0.55,0);
\coordinate (p5) at (0.175,0.8);
\coordinate (p6) at (-0.175,0.8);
\coordinate (p7) at (0.95,0);
\coordinate (p8) at (1.3,0);
\coordinate (p9) at (0.2,1.6);
\coordinate (p10) at (0.55,1.6);
%
\fill [orange!23] 
(p1) .. controls +(0,0.1) and +(0,0.1) ..  (p2)
-- (p2) .. controls +(0,0.65) and +(0,0.65) ..  (p3)
-- (p3) -- (p4)
-- (p4) .. controls +(-0.55,1.95) and +(-0.1,0.5) ..  (p7)
-- (p7) -- (p8)
-- (p8) .. controls +(0,0.5) and +(0,-0.5) ..  (p10)
-- (p10) .. controls +(0,0.1) and +(0,0.1) ..  (p9)
-- (p9) .. controls +(0,-0.5) and +(0,0.75) ..  (p1)
;
%
\fill [orange!38] 
(p1) .. controls +(0,0.1) and +(0,0.1) ..  (p2) -- 
(p2) .. controls +(0,-0.1) and +(0,-0.1) ..  (p1)
;
%
\fill [orange!38] 
(p3) .. controls +(0,0.1) and +(0,0.1) ..  (p4) -- 
(p4) .. controls +(0,-0.1) and +(0,-0.1) ..  (p3)
;
%
\fill [orange!38] 
(p7) .. controls +(0,0.1) and +(0,0.1) ..  (p8) -- 
(p8) .. controls +(0,-0.1) and +(0,-0.1) ..  (p7)
;
\draw (p4) .. controls +(-0.55,1.95) and +(-0.1,0.5) ..  (p7);
\draw (p2) .. controls +(0,0.65) and +(0,0.65) ..  (p3); 
\draw (p8) .. controls +(0,0.5) and +(0,-0.5) ..  (p10); 
\draw (p9) .. controls +(0,-0.5) and +(0,0.75) ..  (p1); 
\draw[very thick, color=red!80!black] (p1) .. controls +(0,0.1) and +(0,0.1) ..  (p2); 
\draw[very thick, color=red!80!black] (p2) .. controls +(0,-0.1) and +(0,-0.1) ..  (p1); 
\draw[very thick, color=red!80!black] (p3) .. controls +(0,0.1) and +(0,0.1) .. (p4); 
\draw[very thick, color=red!80!black] (p4) .. controls +(0,-0.1) and +(0,-0.1) ..  (p3); 
\draw[very thick, color=red!80!black] (p7) .. controls +(0,0.1) and +(0,0.1) .. (p8); 
\draw[very thick, color=red!80!black] (p8) .. controls +(0,-0.1) and +(0,-0.1) ..  (p7); 
\draw[very thick, color=red!80!black] (p9) .. controls +(0,0.1) and +(0,0.1) .. (p10); 
\draw[very thick, color=red!80!black, opacity=0.2] (p10) .. controls +(0,-0.1) and +(0,-0.1) ..  (p9); 
\end{tikzpicture}
} 
= 
\tikzzbox
{
\begin{tikzpicture}[thick,scale=1.0,color=black, baseline=0.75cm]
\coordinate (p1) at (-0.55,0);
\coordinate (p2) at (-0.2,0);
\coordinate (p3) at (0.2,0);
\coordinate (p4) at (0.55,0);
\coordinate (p5) at (0.175,0.8);
\coordinate (p6) at (-0.175,0.8);
\coordinate (p7) at (-0.95,0);
\coordinate (p8) at (-1.3,0);
\coordinate (p9) at (-0.2,1.6);
\coordinate (p10) at (-0.55,1.6);
%
\fill[orange!23] 
(p8) -- (p7)
-- (p7) .. controls +(0.1,0.5) and +(0.55,1.95) ..  (p1)
-- (p1) -- (p2)
-- (p2) .. controls +(0,0.65) and +(0,0.65) ..  (p3)
-- (p3) -- (p4)
-- (p4) .. controls +(0,0.75) and +(0,-0.5) ..  (p9)
-- (p9) .. controls +(0,0.1) and +(0,0.1) .. (p10)
-- (p10) .. controls +(0,-0.5) and +(0,0.5) ..  (p8)
;
%
%
\fill [orange!38] 
(p1) .. controls +(0,0.1) and +(0,0.1) ..  (p2) -- 
(p2) .. controls +(0,-0.1) and +(0,-0.1) ..  (p1)
;
%
\fill [orange!38] 
(p3) .. controls +(0,0.1) and +(0,0.1) ..  (p4) -- 
(p4) .. controls +(0,-0.1) and +(0,-0.1) ..  (p3)
;
%
\fill [orange!38] 
(p7) .. controls +(0,0.1) and +(0,0.1) ..  (p8) -- 
(p8) .. controls +(0,-0.1) and +(0,-0.1) ..  (p7)
;
\draw (p7) .. controls +(0.1,0.5) and +(0.55,1.95) ..  (p1); 
\draw (p2) .. controls +(0,0.65) and +(0,0.65) ..  (p3); 
\draw (p4) .. controls +(0,0.75) and +(0,-0.5) ..  (p9); 
\draw (p10) .. controls +(0,-0.5) and +(0,0.5) ..  (p8); 
\draw[very thick, color=red!80!black] (p1) .. controls +(0,0.1) and +(0,0.1) ..  (p2); 
\draw[very thick, color=red!80!black] (p2) .. controls +(0,-0.1) and +(0,-0.1) ..  (p1); 
\draw[very thick, color=red!80!black] (p3) .. controls +(0,0.1) and +(0,0.1) .. (p4); 
\draw[very thick, color=red!80!black] (p4) .. controls +(0,-0.1) and +(0,-0.1) ..  (p3); 
\draw[very thick, color=red!80!black] (p7) .. controls +(0,0.1) and +(0,0.1) .. (p8); 
\draw[very thick, color=red!80!black] (p8) .. controls +(0,-0.1) and +(0,-0.1) ..  (p7); 
\draw[very thick, color=red!80!black] (p9) .. controls +(0,0.1) and +(0,0.1) .. (p10); 
\draw[very thick, color=red!80!black, opacity=0.2] (p10) .. controls +(0,-0.1) and +(0,-0.1) ..  (p9); 
\end{tikzpicture}
} 
\, , \quad 
\tikzzbox
{
\begin{tikzpicture}[thick,scale=1.0,color=black, rotate=180, baseline=-0.85cm]
\coordinate (p1) at (-0.55,0);
\coordinate (p2) at (-0.2,0);
\coordinate (p3) at (0.2,0);
\coordinate (p4) at (0.55,0);
\coordinate (p5) at (0.175,0.8);
\coordinate (p6) at (-0.725,0.8);
\coordinate (p7) at (0.95,0);
\coordinate (p8) at (1.3,0);
\coordinate (p9) at (0.2,1.6);
\coordinate (p10) at (0.55,1.6);
%
\fill [orange!23] 
(p1) .. controls +(0,-0.1) and +(0,-0.1) ..  (p2)
-- (p2) .. controls +(0,0.65) and +(0,0.65) ..  (p3)
-- (p3) .. controls +(0,-0.1) and +(0,-0.1) ..  (p4)
-- (p4) .. controls +(-0.55,1.95) and +(-0.1,0.5) ..  (p7)
-- (p7) .. controls +(0,-0.1) and +(0,-0.1) ..  (p8)
-- (p8) .. controls +(0,0.5) and +(0,-0.5) ..  (p10)
-- (p10) .. controls +(0,0.1) and +(0,0.1) ..  (p9)
-- (p9) .. controls +(0,-0.5) and +(0,0.75) ..  (p1)
;
%
\fill [orange!38] 
(p9) .. controls +(0,0.1) and +(0,0.1) ..  (p10) -- 
(p10) .. controls +(0,-0.1) and +(0,-0.1) ..  (p9)
;
\draw (p4) .. controls +(-0.55,1.95) and +(-0.1,0.5) ..  (p7);
\draw (p2) .. controls +(0,0.65) and +(0,0.65) ..  (p3); 
\draw (p8) .. controls +(0,0.5) and +(0,-0.5) ..  (p10); 
\draw (p9) .. controls +(0,-0.5) and +(0,0.75) ..  (p1); 
\draw[very thick, color=red!80!black, opacity=0.2] (p1) .. controls +(0,0.1) and +(0,0.1) ..  (p2); 
\draw[very thick, color=red!80!black] (p2) .. controls +(0,-0.1) and +(0,-0.1) ..  (p1); 
\draw[very thick, color=red!80!black, opacity=0.2] (p3) .. controls +(0,0.1) and +(0,0.1) .. (p4); 
\draw[very thick, color=red!80!black] (p4) .. controls +(0,-0.1) and +(0,-0.1) ..  (p3); 
\draw[very thick, color=red!80!black, opacity=0.2] (p7) .. controls +(0,0.1) and +(0,0.1) .. (p8); 
\draw[very thick, color=red!80!black] (p8) .. controls +(0,-0.1) and +(0,-0.1) ..  (p7); 
\draw[very thick, color=red!80!black] (p9) .. controls +(0,0.1) and +(0,0.1) .. (p10); 
\draw[very thick, color=red!80!black] (p10) .. controls +(0,-0.1) and +(0,-0.1) ..  (p9); 
\end{tikzpicture}
} 
= 
\tikzzbox
{
\begin{tikzpicture}[thick,scale=1.0,color=black, rotate=180, baseline=-0.85cm]
\coordinate (p1) at (-0.55,0);
\coordinate (p2) at (-0.2,0);
\coordinate (p3) at (0.2,0);
\coordinate (p4) at (0.55,0);
\coordinate (p5) at (0.175,0.8);
\coordinate (p6) at (-0.175,0.8);
\coordinate (p7) at (-0.95,0);
\coordinate (p8) at (-1.3,0);
\coordinate (p9) at (-0.2,1.6);
\coordinate (p10) at (-0.55,1.6);
%
\fill[orange!23] 
(p8) .. controls +(0,-0.1) and +(0,-0.1) ..  (p7)
-- (p7) .. controls +(0.1,0.5) and +(0.55,1.95) ..  (p1)
-- (p1) .. controls +(0,-0.1) and +(0,-0.1) ..  (p2)
-- (p2) .. controls +(0,0.65) and +(0,0.65) ..  (p3)
-- (p3) .. controls +(0,-0.1) and +(0,-0.1) ..  (p4)
-- (p4) .. controls +(0,0.75) and +(0,-0.5) ..  (p9)
-- (p9) .. controls +(0,0.1) and +(0,0.1) .. (p10)
-- (p10) .. controls +(0,-0.5) and +(0,0.5) ..  (p8)
;
%
\fill [orange!38] 
(p9) .. controls +(0,0.1) and +(0,0.1) ..  (p10) -- 
(p10) .. controls +(0,-0.1) and +(0,-0.1) ..  (p9)
;
\draw (p7) .. controls +(0.1,0.5) and +(0.55,1.95) ..  (p1); 
\draw (p2) .. controls +(0,0.65) and +(0,0.65) ..  (p3); 
\draw (p4) .. controls +(0,0.75) and +(0,-0.5) ..  (p9); 
\draw (p10) .. controls +(0,-0.5) and +(0,0.5) ..  (p8); 
\draw[very thick, color=red!80!black, opacity=0.2] (p1) .. controls +(0,0.1) and +(0,0.1) ..  (p2); 
\draw[very thick, color=red!80!black] (p2) .. controls +(0,-0.1) and +(0,-0.1) ..  (p1); 
\draw[very thick, color=red!80!black, opacity=0.2] (p3) .. controls +(0,0.1) and +(0,0.1) .. (p4); 
\draw[very thick, color=red!80!black] (p4) .. controls +(0,-0.1) and +(0,-0.1) ..  (p3); 
\draw[very thick, color=red!80!black, opacity=0.2] (p7) .. controls +(0,0.1) and +(0,0.1) .. (p8); 
\draw[very thick, color=red!80!black] (p8) .. controls +(0,-0.1) and +(0,-0.1) ..  (p7); 
\draw[very thick, color=red!80!black] (p9) .. controls +(0,0.1) and +(0,0.1) .. (p10); 
\draw[very thick, color=red!80!black] (p10) .. controls +(0,-0.1) and +(0,-0.1) ..  (p9); 
\end{tikzpicture}
}
\, , 
\label{eq:R1}
\\
& 
\tikzzbox
{
\begin{tikzpicture}[thick,scale=1.0,color=black, baseline=0.3cm]
\coordinate (p1) at (-0.55,0);
\coordinate (p2) at (-0.2,0);
\coordinate (p3) at (0.2,0);
\coordinate (p4) at (0.55,0);
\coordinate (p5) at (0.175,0.8);
\coordinate (p6) at (-0.175,0.8);
%
\fill [orange!23] 
(p1) .. controls +(0,-0.4) and +(0,-0.4) ..  (p2)
-- (p2) .. controls +(0,0.35) and +(0,0.35) ..  (p3)
-- (p3) -- (p4)
-- (p4) .. controls +(0,0.5) and +(0,-0.5) ..  (p5)
-- (p5) .. controls +(0,0.1) and +(0,0.1) .. (p6)
-- (p6) .. controls +(0,-0.5) and +(0,0.5) ..  (p1)
;
%
\fill [orange!38] 
(p3) .. controls +(0,0.1) and +(0,0.1) ..  (p4) -- 
(p4) .. controls +(0,-0.1) and +(0,-0.1) ..  (p3)
;
\draw (p1) .. controls +(0,-0.4) and +(0,-0.4) ..  (p2); 
\draw (p2) .. controls +(0,0.35) and +(0,0.35) ..  (p3); 
\draw (p4) .. controls +(0,0.5) and +(0,-0.5) ..  (p5); 
\draw (p6) .. controls +(0,-0.5) and +(0,0.5) ..  (p1); 
\draw[very thick, color=red!80!black] (p3) .. controls +(0,0.1) and +(0,0.1) ..  (p4); 
\draw[very thick, color=red!80!black] (p3) .. controls +(0,-0.1) and +(0,-0.1) ..  (p4); 
\draw[very thick, color=red!80!black] (p5) .. controls +(0,0.1) and +(0,0.1) ..  (p6); 
\draw[very thick, color=red!80!black, opacity=0.2] (p5) .. controls +(0,-0.1) and +(0,-0.1) ..  (p6); 
\end{tikzpicture}
} 
= 
\tikzzbox
{
\begin{tikzpicture}[thick,scale=1.0,color=black, baseline=0.3cm]
\coordinate (p1) at (-0.55,0);
\coordinate (p2) at (-0.2,0);
\coordinate (p3) at (-0.55,0.8);
\coordinate (p4) at (-0.2,0.8);
%
\fill [orange!23] 
(p1) -- (p2)
-- (p4)
.. controls +(0,0.1) and +(0,0.1) .. (p3)
-- (p1)
;
%
\fill [orange!38] 
(p1) .. controls +(0,0.1) and +(0,0.1) ..  (p2) -- 
(p2) .. controls +(0,-0.1) and +(0,-0.1) ..  (p1)
;
\draw (p2) -- (p4); 
\draw (p3) -- (p1); 
\draw[very thick, color=red!80!black] (p3) .. controls +(0,0.1) and +(0,0.1) ..  (p4); 
\draw[very thick, color=red!80!black, opacity=0.2] (p3) .. controls +(0,-0.1) and +(0,-0.1) ..  (p4); 
\draw[very thick, color=red!80!black] (p1) .. controls +(0,0.1) and +(0,0.1) ..  (p2); 
\draw[very thick, color=red!80!black] (p1) .. controls +(0,-0.1) and +(0,-0.1) ..  (p2); 
\end{tikzpicture}
} 
=
\tikzzbox
{
\begin{tikzpicture}[thick,scale=1.0,color=black, baseline=0.3cm]
\coordinate (p1) at (-0.55,0);
\coordinate (p2) at (-0.2,0);
\coordinate (p3) at (0.2,0);
\coordinate (p4) at (0.55,0);
\coordinate (p5) at (0.175,0.8);
\coordinate (p6) at (-0.175,0.8);
%
\fill [orange!23] 
(p3) -- (p4)
-- (p2) .. controls +(0,0.35) and +(0,0.35) ..  (p3)
-- (p3) .. controls +(0,-0.4) and +(0,-0.4) ..  (p4)
-- (p4) .. controls +(0,0.5) and +(0,-0.5) ..  (p5)
-- (p5) .. controls +(0,0.1) and +(0,0.1) ..  (p6)
-- (p6) .. controls +(0,-0.5) and +(0,0.5) ..  (p1)
;
\fill [orange!38] 
(p1) .. controls +(0,0.1) and +(0,0.1) ..  (p2) -- 
(p2) .. controls +(0,-0.1) and +(0,-0.1) ..  (p1)
;
\draw (p2) .. controls +(0,0.35) and +(0,0.35) ..  (p3); 
\draw (p3) .. controls +(0,-0.4) and +(0,-0.4) ..  (p4); 
\draw (p4) .. controls +(0,0.5) and +(0,-0.5) ..  (p5); 
\draw (p6) .. controls +(0,-0.5) and +(0,0.5) ..  (p1); 
\draw[very thick, color=red!80!black] (p1) .. controls +(0,0.1) and +(0,0.1) ..  (p2); 
\draw[very thick, color=red!80!black] (p1) .. controls +(0,-0.1) and +(0,-0.1) ..  (p2); 
\draw[very thick, color=red!80!black] (p5) .. controls +(0,0.1) and +(0,0.1) ..  (p6); 
\draw[very thick, color=red!80!black, opacity=0.2] (p5) .. controls +(0,-0.1) and +(0,-0.1) ..  (p6); 
\end{tikzpicture}
} 
\, , \quad 
\tikzzbox
{
\begin{tikzpicture}[thick,scale=1.0,color=black, rotate=180, baseline=-0.5cm]
\coordinate (p1) at (-0.55,0);
\coordinate (p2) at (-0.2,0);
\coordinate (p3) at (0.2,0);
\coordinate (p4) at (0.55,0);
\coordinate (p5) at (0.175,0.8);
\coordinate (p6) at (-0.175,0.8);
%
\fill [orange!23] 
(p1) .. controls +(0,-0.4) and +(0,-0.4) ..  (p2)
-- (p2) .. controls +(0,0.35) and +(0,0.35) ..  (p3)
-- (p3) .. controls +(0,-0.1) and +(0,-0.1) .. (p4)
-- (p4) .. controls +(0,0.5) and +(0,-0.5) ..  (p5)
-- (p5) .. controls +(0,0.1) and +(0,0.1) .. (p6)
-- (p6) .. controls +(0,-0.5) and +(0,0.5) ..  (p1)
;
%
\fill [orange!38] 
(p5) .. controls +(0,0.1) and +(0,0.1) ..  (p6) -- 
(p6) .. controls +(0,-0.1) and +(0,-0.1) ..  (p5)
;
\draw (p1) .. controls +(0,-0.4) and +(0,-0.4) ..  (p2); 
\draw (p2) .. controls +(0,0.35) and +(0,0.35) ..  (p3); 
\draw (p4) .. controls +(0,0.5) and +(0,-0.5) ..  (p5); 
\draw (p6) .. controls +(0,-0.5) and +(0,0.5) ..  (p1); 
\draw[very thick, color=red!80!black, opacity=0.2] (p3) .. controls +(0,0.1) and +(0,0.1) ..  (p4); 
\draw[very thick, color=red!80!black] (p3) .. controls +(0,-0.1) and +(0,-0.1) ..  (p4); 
\draw[very thick, color=red!80!black] (p5) .. controls +(0,0.1) and +(0,0.1) ..  (p6); 
\draw[very thick, color=red!80!black] (p5) .. controls +(0,-0.1) and +(0,-0.1) ..  (p6); 
\end{tikzpicture}
} 
= 
\tikzzbox
{
\begin{tikzpicture}[thick,scale=1.0,color=black, baseline=0.3cm]
\coordinate (p1) at (-0.55,0);
\coordinate (p2) at (-0.2,0);
\coordinate (p3) at (-0.55,0.8);
\coordinate (p4) at (-0.2,0.8);
%
\fill [orange!23] 
(p1) -- (p2)
-- (p4)
.. controls +(0,0.1) and +(0,0.1) .. (p3)
-- (p1)
;
%
\fill [orange!38] 
(p1) .. controls +(0,0.1) and +(0,0.1) ..  (p2) -- 
(p2) .. controls +(0,-0.1) and +(0,-0.1) ..  (p1)
;
\draw (p2) -- (p4); 
\draw (p3) -- (p1); 
\draw[very thick, color=red!80!black] (p3) .. controls +(0,0.1) and +(0,0.1) ..  (p4); 
\draw[very thick, color=red!80!black, opacity=0.2] (p3) .. controls +(0,-0.1) and +(0,-0.1) ..  (p4); 
\draw[very thick, color=red!80!black] (p1) .. controls +(0,0.1) and +(0,0.1) ..  (p2); 
\draw[very thick, color=red!80!black] (p1) .. controls +(0,-0.1) and +(0,-0.1) ..  (p2); 
\end{tikzpicture}
} 
=
\tikzzbox
{
\begin{tikzpicture}[thick,scale=1.0,color=black, rotate=180, baseline=-0.5cm]
\coordinate (p1) at (-0.55,0);
\coordinate (p2) at (-0.2,0);
\coordinate (p3) at (0.2,0);
\coordinate (p4) at (0.55,0);
\coordinate (p5) at (0.175,0.8);
\coordinate (p6) at (-0.175,0.8);
%
\fill [orange!23] 
(p3) -- (p4)
-- (p2) .. controls +(0,0.35) and +(0,0.35) ..  (p3)
-- (p3) .. controls +(0,-0.4) and +(0,-0.4) ..  (p4)
-- (p4) .. controls +(0,0.5) and +(0,-0.5) ..  (p5)
-- (p5) .. controls +(0,0.1) and +(0,0.1) ..  (p6)
-- (p6) .. controls +(0,-0.5) and +(0,0.5) ..  (p1)
;
\fill [orange!38] 
(p5) .. controls +(0,0.1) and +(0,0.1) ..  (p6) -- 
(p6) .. controls +(0,-0.1) and +(0,-0.1) ..  (p5)
;
%
\fill [orange!23] 
(p1) .. controls +(0,0.1) and +(0,0.1) ..  (p2) -- 
(p2) .. controls +(0,-0.1) and +(0,-0.1) ..  (p1)
;
\draw (p2) .. controls +(0,0.35) and +(0,0.35) ..  (p3); 
\draw (p3) .. controls +(0,-0.4) and +(0,-0.4) ..  (p4); 
\draw (p4) .. controls +(0,0.5) and +(0,-0.5) ..  (p5); 
\draw (p6) .. controls +(0,-0.5) and +(0,0.5) ..  (p1); 
\draw[very thick, color=red!80!black, opacity=0.2] (p1) .. controls +(0,0.1) and +(0,0.1) ..  (p2); 
\draw[very thick, color=red!80!black] (p1) .. controls +(0,-0.1) and +(0,-0.1) ..  (p2); 
\draw[very thick, color=red!80!black] (p5) .. controls +(0,0.1) and +(0,0.1) ..  (p6); 
\draw[very thick, color=red!80!black] (p5) .. controls +(0,-0.1) and +(0,-0.1) ..  (p6); 
\end{tikzpicture}
} 
\, , \quad
\label{eq:R2}
\\
& 
\tikzzbox
{
\begin{tikzpicture}[thick,scale=1.0, color=black, baseline=0.7cm]
\coordinate (p1) at (-0.55,0);
\coordinate (p2) at (-0.2,0);
\coordinate (p3) at (0.45,0);
\coordinate (p4) at (0.8,0);
\coordinate (q1) at (0.2,1.6);
\coordinate (q2) at (0.55,1.6);
\coordinate (q3) at (1.2,1.6);
\coordinate (q4) at (1.55,1.6);
%
\fill [orange!23] 
(p1) -- (p2)
-- (p2) .. controls +(0.7,1.55) and +(-0.2,0.85) ..  (p3)
-- (p3) -- (p4)
-- (p4) .. controls +(0,0.5) and +(0,-0.5) .. (q4)
-- (q4) .. controls +(0,0.1) and +(0,0.1) .. (q3)
-- (q3) .. controls +(-0.7,-1.55) and +(0.2,-0.85) .. (q2)
-- (q2) .. controls +(0,0.1) and +(0,0.1) .. (q1)
-- (q1) .. controls +(0,-0.5) and +(0,0.5) ..  (p1)
;
%
\fill [orange!38] 
(p1) .. controls +(0,0.1) and +(0,0.1) ..  (p2) -- 
(p2) .. controls +(0,-0.1) and +(0,-0.1) ..  (p1)
;
%
\fill [orange!38] 
(p3) .. controls +(0,0.1) and +(0,0.1) ..  (p4) -- 
(p4) .. controls +(0,-0.1) and +(0,-0.1) ..  (p3)
;
\draw (p2) .. controls +(0.7,1.55) and +(-0.2,0.85) ..  (p3); 
\draw (p4) .. controls +(0,0.5) and +(0,-0.5) .. (q4); 
\draw (q3) .. controls +(-0.7,-1.55) and +(0.2,-0.85) .. (q2); 
\draw (q1) .. controls +(0,-0.5) and +(0,0.5) ..  (p1); 
\draw[very thick, color=red!80!black] (p1) .. controls +(0,0.1) and +(0,0.1) ..  (p2); 
\draw[very thick, color=red!80!black] (p2) .. controls +(0,-0.1) and +(0,-0.1) ..  (p1); 
\draw[very thick, color=red!80!black] (p3) .. controls +(0,0.1) and +(0,0.1) .. (p4); 
\draw[very thick, color=red!80!black] (p4) .. controls +(0,-0.1) and +(0,-0.1) ..  (p3); 
\draw[very thick, color=red!80!black] (q1) .. controls +(0,0.1) and +(0,0.1) ..  (q2); 
\draw[very thick, color=red!80!black, opacity=0.2] (q2) .. controls +(0,-0.1) and +(0,-0.1) ..  (q1); 
\draw[very thick, color=red!80!black] (q3) .. controls +(0,0.1) and +(0,0.1) .. (q4); 
\draw[very thick, color=red!80!black, opacity=0.2] (q4) .. controls +(0,-0.1) and +(0,-0.1) ..  (q3); 
\end{tikzpicture}
} 
= 
\tikzzbox
{
\begin{tikzpicture}[thick, scale=1.0, color=black, baseline=0.7cm]
\coordinate (p1) at (-0.55,0);
\coordinate (p2) at (-0.2,0);
\coordinate (p3) at (0.2,0);
\coordinate (p4) at (0.55,0);
\coordinate (p5) at (0.175,0.8);
\coordinate (p6) at (-0.175,0.8);
\coordinate (q1) at (-0.55,1.6);
\coordinate (q2) at (-0.2,1.6);
\coordinate (q3) at (0.2,1.6);
\coordinate (q4) at (0.55,1.6);
%
\fill [orange!23] 
(p1) -- (p2)
-- (p2) .. controls +(0,0.35) and +(0,0.35) ..  (p3)
-- (p3) -- (p4)
-- (p4) .. controls +(0,0.5) and +(0,-0.5) ..  (p5)
-- (p5) .. controls +(0,0.5) and +(0,-0.5) .. (q4)
-- (q4) .. controls +(0,0.1) and +(0,0.1) ..  (q3)
-- (q3) .. controls +(0,-0.35) and +(0,-0.35) ..  (q2)
-- (q2) .. controls +(0,0.1) and +(0,0.1) ..  (q1)
-- (q1) .. controls +(0,-0.5) and +(0,0.5) .. (p6)
-- (p6) .. controls +(0,-0.5) and +(0,0.5) ..  (p1)
;
%
\fill [orange!38] 
(p1) .. controls +(0,0.1) and +(0,0.1) ..  (p2) -- 
(p2) .. controls +(0,-0.1) and +(0,-0.1) ..  (p1)
;
%
\fill [orange!38] 
(p3) .. controls +(0,0.1) and +(0,0.1) ..  (p4) -- 
(p4) .. controls +(0,-0.1) and +(0,-0.1) ..  (p3)
;
\draw (p2) .. controls +(0,0.35) and +(0,0.35) ..  (p3); 
\draw (p4) .. controls +(0,0.5) and +(0,-0.5) .. (p5); 
\draw (p5) .. controls +(0,0.5) and +(0,-0.5) .. (q4); 
\draw (q3) .. controls +(0,-0.35) and +(0,-0.35) ..  (q2); 
\draw (q1) .. controls +(0,-0.5) and +(0,0.5) .. (p6); 
\draw (p6) .. controls +(0,-0.5) and +(0,0.5) ..  (p1); 
\draw[very thick, color=red!80!black] (p1) .. controls +(0,0.1) and +(0,0.1) ..  (p2); 
\draw[very thick, color=red!80!black] (p2) .. controls +(0,-0.1) and +(0,-0.1) ..  (p1); 
\draw[very thick, color=red!80!black] (p3) .. controls +(0,0.1) and +(0,0.1) .. (p4); 
\draw[very thick, color=red!80!black] (p4) .. controls +(0,-0.1) and +(0,-0.1) ..  (p3); 
\draw[very thick, color=red!80!black] (q1) .. controls +(0,0.1) and +(0,0.1) ..  (q2); 
\draw[very thick, color=red!80!black, opacity=0.2] (q2) .. controls +(0,-0.1) and +(0,-0.1) ..  (q1); 
\draw[very thick, color=red!80!black] (q3) .. controls +(0,0.1) and +(0,0.1) .. (q4); 
\draw[very thick, color=red!80!black, opacity=0.2] (q4) .. controls +(0,-0.1) and +(0,-0.1) ..  (q3); 
\end{tikzpicture}
} 
=
\tikzzbox
{
\begin{tikzpicture}[thick, scale=1.0, color=black, baseline=0.7cm]
\coordinate (p1) at (0.55,0);
\coordinate (p2) at (0.2,0);
\coordinate (p3) at (-0.45,0);
\coordinate (p4) at (-0.8,0);
\coordinate (q1) at (-0.2,1.6);
\coordinate (q2) at (-0.55,1.6);
\coordinate (q3) at (-1.2,1.6);
\coordinate (q4) at (-1.55,1.6);
%
\fill [orange!23] 
(p1) -- (p2)
-- (p2) .. controls +(-0.7,1.55) and +(0.2,0.85) ..  (p3)
-- (p3) -- (p4)
-- (p4) .. controls +(0,0.5) and +(0,-0.5) .. (q4)
-- (q4) .. controls +(0,0.1) and +(0,0.1) ..  (q3)
-- (q3) .. controls +(0.7,-1.55) and +(-0.2,-0.85) .. (q2)
-- (q2) .. controls +(0,0.1) and +(0,0.1) ..  (q1)
-- (q1) .. controls +(0,-0.5) and +(0,0.5) ..  (p1)
;
%
\fill [orange!38] 
(p1) .. controls +(0,0.1) and +(0,0.1) ..  (p2) -- 
(p2) .. controls +(0,-0.1) and +(0,-0.1) ..  (p1)
;
%
\fill [orange!38] 
(p3) .. controls +(0,0.1) and +(0,0.1) ..  (p4) -- 
(p4) .. controls +(0,-0.1) and +(0,-0.1) ..  (p3)
;
\draw (p2) .. controls +(-0.7,1.55) and +(0.2,0.85) ..  (p3); 
\draw (p4) .. controls +(0,0.5) and +(0,-0.5) .. (q4); 
\draw (q3) .. controls +(0.7,-1.55) and +(-0.2,-0.85) .. (q2); 
\draw (q1) .. controls +(0,-0.5) and +(0,0.5) ..  (p1); 
\draw[very thick, color=red!80!black] (p1) .. controls +(0,0.1) and +(0,0.1) ..  (p2); 
\draw[very thick, color=red!80!black] (p2) .. controls +(0,-0.1) and +(0,-0.1) ..  (p1); 
\draw[very thick, color=red!80!black] (p3) .. controls +(0,0.1) and +(0,0.1) .. (p4); 
\draw[very thick, color=red!80!black] (p4) .. controls +(0,-0.1) and +(0,-0.1) ..  (p3); 
\draw[very thick, color=red!80!black] (q1) .. controls +(0,0.1) and +(0,0.1) ..  (q2); 
\draw[very thick, color=red!80!black, opacity=0.2] (q2) .. controls +(0,-0.1) and +(0,-0.1) ..  (q1); 
\draw[very thick, color=red!80!black] (q3) .. controls +(0,0.1) and +(0,0.1) .. (q4); 
\draw[very thick, color=red!80!black, opacity=0.2] (q4) .. controls +(0,-0.1) and +(0,-0.1) ..  (q3); 
\end{tikzpicture}
} 
\, ,
\label{eq:R3}
\\
&
\label{eq:R4} 
\tikzzbox
{
\begin{tikzpicture}[thick, scale=1.0, color=black, baseline=0.3cm]
\coordinate (p1) at (-0.55,0);
\coordinate (p2) at (-0.2,0);
\coordinate (p3) at (0.2,0);
\coordinate (p4) at (0.55,0);
\coordinate (p5) at (0.175,0.8);
\coordinate (p6) at (-0.175,0.8);
\coordinate (q1) at (-0.55,-0.8);
\coordinate (q2) at (-0.2,-0.8);
\coordinate (q3) at (0.2,-0.8);
\coordinate (q4) at (0.55,-0.8);
\coordinate (q5) at (0.2,-0.8);
\coordinate (q6) at (0.55,-0.8);
%
\fill [orange!23] 
(p1) -- (p2)
-- (p2) .. controls +(0,0.35) and +(0,0.35) ..  (p3)
-- (p3) -- (p4)
-- (p4) .. controls +(0,0.5) and +(0,-0.5) ..  (p5)
-- (p5) .. controls +(0,0.1) and +(0,0.1) .. (p6)
-- (p6) .. controls +(0,-0.5) and +(0,0.5) ..  (p1)
;
\fill [orange!23] 
(q4) -- (p2) -- (p1) -- (q3);
\fill [orange!23] 
(q1) -- (p3) -- (p4) -- (q2);
%
\fill [orange!38] 
(q1) .. controls +(0,0.1) and +(0,0.1) ..  (q2) --
(q2) .. controls +(0,-0.1) and +(0,-0.1) ..  (q1);
%
\fill [orange!38] 
(q3) .. controls +(0,0.1) and +(0,0.1) ..  (q4) --
(q4) .. controls +(0,-0.1) and +(0,-0.1) ..  (q3);
\draw (q4) -- (p2);
\draw (q4) -- (p2) .. controls +(0,0.35) and +(0,0.35) ..  (p3) -- (q1); 
\draw (q2) -- (p4) .. controls +(0,0.5) and +(0,-0.5) .. (p5); 
\draw (p6) .. controls +(0,-0.5) and +(0,0.5) ..  (p1) -- (q3); 
\draw[very thick, color=red!80!black] (p5) .. controls +(0,0.1) and +(0,0.1) ..  (p6); 
\draw[very thick, color=red!80!black, opacity=0.2] (p5) .. controls +(0,-0.1) and +(0,-0.1) ..  (p6); 
\draw[very thick, color=red!80!black] (q1) .. controls +(0,0.1) and +(0,0.1) ..  (q2); 
\draw[very thick, color=red!80!black] (q1) .. controls +(0,-0.1) and +(0,-0.1) ..  (q2); 
\draw[very thick, color=red!80!black] (q3) .. controls +(0,0.1) and +(0,0.1) ..  (q4); 
\draw[very thick, color=red!80!black] (q3) .. controls +(0,-0.1) and +(0,-0.1) ..  (q4); 
\end{tikzpicture}
} 
=
\tikzzbox
{
\begin{tikzpicture}[thick, scale=1.0, color=black, baseline=0.3cm]
\coordinate (p1) at (-0.55,0);
\coordinate (p2) at (-0.2,0);
\coordinate (p3) at (0.2,0);
\coordinate (p4) at (0.55,0);
\coordinate (p5) at (0.175,0.8);
\coordinate (p6) at (-0.175,0.8);
\coordinate (q1) at (-0.55,-0.8);
\coordinate (q2) at (-0.2,-0.8);
\coordinate (q3) at (0.2,-0.8);
\coordinate (q4) at (0.55,-0.8);
%
\fill [orange!23] 
(p2) -- (q2) -- (q1) -- 
(p1) -- (p2)
-- (p2) .. controls +(0,0.35) and +(0,0.35) ..  (p3)
-- (q3)
-- (q3) -- (q4)
-- (p4) .. controls +(0,0.5) and +(0,-0.5) ..  (p5)
-- (p5) .. controls +(0,0.1) and +(0,0.1) .. (p6)
-- (p6) .. controls +(0,-0.5) and +(0,0.5) ..  (p1)
;
%
\fill [orange!38] 
(q3) .. controls +(0,0.1) and +(0,0.1) ..  (q4) --
(q4) .. controls +(0,-0.1) and +(0,-0.1) ..  (q3);
%
\fill [orange!38] 
(q1) .. controls +(0,0.1) and +(0,0.1) ..  (q2) --
(q2) .. controls +(0,-0.1) and +(0,-0.1) ..  (q1);
\draw (p2) .. controls +(0,0.35) and +(0,0.35) ..  (p3); 
\draw (p4) .. controls +(0,0.5) and +(0,-0.5) .. (p5); 
\draw (p6) .. controls +(0,-0.5) and +(0,0.5) ..  (p1); 
\draw (p1) -- (q1); 
\draw (p2) -- (q2); 
\draw (p4) -- (q4); 
\draw (p3) -- (q3); 
\draw[very thick, color=red!80!black] (p5) .. controls +(0,0.1) and +(0,0.1) ..  (p6); 
\draw[very thick, color=red!80!black, opacity=0.2] (p5) .. controls +(0,-0.1) and +(0,-0.1) ..  (p6); 
\draw[very thick, color=red!80!black] (q1) .. controls +(0,0.1) and +(0,0.1) ..  (q2); 
\draw[very thick, color=red!80!black] (q1) .. controls +(0,-0.1) and +(0,-0.1) ..  (q2); 
\draw[very thick, color=red!80!black] (q3) .. controls +(0,0.1) and +(0,0.1) ..  (q4); 
\draw[very thick, color=red!80!black] (q3) .. controls +(0,-0.1) and +(0,-0.1) ..  (q4); 
\end{tikzpicture}
}
\, .
\end{align}
Accordingly, the set of relations $G_2$ is comprised precisely of the pairs of morphisms related by an equality above. 
The key property of $\Bord_2$ now is that it satisfies the universal property in Definition~\ref{def:freesymmon}: 
	
\begin{theorem}\label{thm:Bord2-freegen}
$\Bord_2$ is freely generated as a symmetric monoidal category by $G_0, G_1, G_2$ as above. 
\end{theorem}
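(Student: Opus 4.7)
The plan is to verify the universal property from Definition~\ref{def:freesymmon} directly. Sending the unique generator of $G_0$ to $S^1 \in \Bord_2$ and each elementary bordism in $G_1$ to the corresponding surface extends, by the universal property of $\mathcal{F}(G_0,G_1)$, to a symmetric monoidal functor $S\colon \mathcal{F}(G_0,G_1) \to \Bord_2$; direct inspection shows $S(f_1)=S(f_2)$ for each pair $(f_1,f_2) \in G_2$, so $S$ descends to a symmetric monoidal functor $\bar S\colon \mathcal{F}(G_0,G_1,G_2) \to \Bord_2$ which is what I must show to be an equivalence. Essential surjectivity on objects is immediate because every closed oriented $1$-manifold is a disjoint union of copies of $S^1$. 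The content of the theorem is therefore full faithfulness: every $2$-bordism must admit a decomposition into generators in $G_1$ up to tensor product and composition (surjectivity on morphisms), and any two such decompositions of the same bordism must be identified by the relations in $G_2$ together with the coherence axioms of a symmetric monoidal category (injectivity).

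For surjectivity on morphisms I would use Morse theory. Given $M\colon E \to F$, choose a Morse function $f\colon M \to [0,1]$ with $f^{-1}(0)=E$, $f^{-1}(1)=F$ and with all interior critical values distinct. Regular level sets $f^{-1}(c)$ are finite disjoint unions of circles, and the preimage of an interval containing no critical value is a cylinder, realising an identity morphism (possibly tensored with a permutation handled for free by the symmetric structure). Passing through a unique critical point of index $0$, $1$, or $2$ corresponds to attaching a $2$-handle of the appropriate index, which is precisely the cap, one of the two pairs of pants (multiplication or comultiplication, depending on whether the handle merges two circles into one or splits one into two), or the cup. Composing all these slabs yields the desired factorisation of $M$ through generators in $G_1$.

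The heart of the argument, and what I expect to be the main obstacle, is injectivity: two Morse-theoretic decompositions of the same bordism must give equal morphisms in $\mathcal{F}(G_0,G_1,G_2)$. Here I would invoke Cerf theory. Any two generic Morse functions on $M$ which agree on $\partial M$ can be joined by a generic one-parameter family $\{f_t\}$ that is itself Morse at all but finitely many values of $t$ and passes only through codimension-one singularities of two kinds: \emph{crossing events}, where two critical values coincide momentarily, and \emph{birth--death events}, where a cancelling pair of critical points of adjacent index is created or destroyed. A crossing of two critical points whose $2$-handles are attached along disjoint regions corresponds to the interchange law in a symmetric monoidal category and produces no new relation; crossings of interacting index-$(1,1)$ critical points translate into (co)associativity~\eqref{eq:R1} or Frobenius~\eqref{eq:R3}, and of indices $(0,1)$ or $(1,2)$ into unitality~\eqref{eq:R2}; the commutativity relation~\eqref{eq:R4} absorbs the ambiguity in identifying a pair of pants after a braiding of its two inputs; birth--death events involving an index-$(0,1)$ or $(1,2)$ pair are again absorbed by~\eqref{eq:R2}. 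Verifying that this dictionary exhausts the codimension-one strata of Cerf's stratification, and hence that the four families of relations in $G_2$ suffice, is the technical crux; a detailed treatment is given in~\cite{Kockbook}.
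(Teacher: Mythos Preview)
The paper does not actually prove Theorem~\ref{thm:Bord2-freegen}; it states the result and points to \cite[Sect.\,1.4]{Kockbook} for the details, noting only that Morse theory yields the generators and that the listed relations suffice. Your sketch is correct and follows one of the standard routes to this result.

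One point worth flagging is that the reference the paper cites proceeds somewhat differently from what you outline. Kock does use Morse theory for surjectivity on morphisms, as you do, but for the sufficiency of the relations he does not invoke Cerf theory. Instead he uses the classification of compact oriented surfaces with boundary: a connected bordism is determined up to diffeomorphism by its genus and the numbers of in- and out-boundary circles, and he shows directly that any word in the generators can be brought to a canonical ``normal form'' (all cups, then all multiplications, then genus-creating handles, then all comultiplications, then all caps) using only the relations \eqref{eq:R1}--\eqref{eq:R4} and the symmetric monoidal axioms. Two decompositions of the same bordism then have the same normal form and are therefore equal in $\mathcal{F}(G_0,G_1,G_2)$. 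Your Cerf-theoretic approach is equally valid and has the advantage of generalising to higher-categorical bordism categories (this is precisely the strategy of \cite{spthesis}), at the cost of a heavier analytic input; Kock's normal-form argument is more elementary but specific to dimension two.
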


The reader may have noticed that \eqref{eq:R1}--\eqref{eq:R4} contains some redundant relations. 
For example, using \eqref{eq:R4} it is enough to remember only one of each of the two equalities in \eqref{eq:R2} and \eqref{eq:R3}. 
With a little more fiddling, also both equalities in \eqref{eq:R1} can be omitted. 
There is no unique set of generators and relations, not even a unique minimal one. 
The above choice is made with hindsight to fit nicely to the algebraic description we will now turn to.

\medskip

With $\Bord_2$ under control, we turn to the algebraic description of a 2-dimensional TQFT 
\be 
\zz\colon \Bord_2 \lra \Vectk \, . 
\ee 
Due to the characterisation of $\Bord_2$ via the universal property in Definition~\ref{def:freesymmon}, a symmetric monoidal functor out of $\Bord_2$ is determined uniquely up to natural monoidal isomorphism by a choice of object for each element of $G_0$, an appropriate choice of morphism for each element of $G_1$, with the condition that all relations from $G_2$ are respected. 
This amounts to identifying the algebraic structure $\textrm{comFrob}_\Bbbk$, which is analogous to the category of dual pairs $\mathcal{DP}_\Bbbk$ that solved the corresponding problem in the 1-dimensional case. 

Let us look at the data defining the symmetric monoidal functor $\zz$ out of $\Bord_2$ in more detail.
First of all, we need a $\Bbbk$-vector space for the unique generator in~$G_0$, which we will abbreviate as
\be \label{eq:equiv-to-cFA-S1}
A := \zz (S^1) \, .
\ee 
Next we need linear maps for the generators in $G_1$, and we write these as
\begin{align}
\mu &= 
\zz \bigg(
\tikzzbox
{
\begin{tikzpicture}[thick,scale=1.0,color=black, baseline=0.3cm]
\coordinate (p1) at (-0.55,0);
\coordinate (p2) at (-0.2,0);
\coordinate (p3) at (0.2,0);
\coordinate (p4) at (0.55,0);
\coordinate (p5) at (0.175,0.8);
\coordinate (p6) at (-0.175,0.8);
%
\fill [orange!23] 
(p1) .. controls +(0,-0.15) and +(0,-0.15) ..  (p2)
-- (p2) .. controls +(0,0.35) and +(0,0.35) ..  (p3)
-- (p3) .. controls +(0,-0.15) and +(0,-0.15) ..  (p4)
-- (p4) .. controls +(0,0.5) and +(0,-0.5) ..  (p5)
-- (p5) .. controls +(0,0.15) and +(0,0.15) ..  (p6)
-- (p6) .. controls +(0,-0.5) and +(0,0.5) ..  (p1)
;
\fill [orange!38] 
(p1) .. controls +(0,-0.15) and +(0,-0.15) ..  (p2)
-- (p2) .. controls +(0,0.15) and +(0,0.15) ..  (p1)
;
\fill [orange!38] 
(p3) .. controls +(0,-0.15) and +(0,-0.15) ..  (p4)
-- (p4) .. controls +(0,0.15) and +(0,0.15) ..  (p3)
;
\draw (p2) .. controls +(0,0.35) and +(0,0.35) ..  (p3); 
\draw (p4) .. controls +(0,0.5) and +(0,-0.5) ..  (p5); 
\draw (p6) .. controls +(0,-0.5) and +(0,0.5) ..  (p1); 
\draw[very thick, red!80!black] (p1) .. controls +(0,0.15) and +(0,0.15) ..  (p2); 
\draw[very thick, red!80!black] (p1) .. controls +(0,-0.15) and +(0,-0.15) ..  (p2); 
\draw[very thick, red!80!black] (p3) .. controls +(0,0.15) and +(0,0.15) ..  (p4); 
\draw[very thick, red!80!black] (p3) .. controls +(0,-0.15) and +(0,-0.15) ..  (p4); 
\draw[very thick, red!80!black] (p5) .. controls +(0,0.15) and +(0,0.15) ..  (p6); 
\draw[very thick, red!80!black, opacity=0.2] (p5) .. controls +(0,-0.15) and +(0,-0.15) ..  (p6); 
\end{tikzpicture}
}
\bigg)\colon A \otimes_\Bbbk A \lra A
\, , \quad
&\eta &= 
\zz \bigg(
\tikzzbox
{
\begin{tikzpicture}[thick,scale=1.0,color=black, baseline=-0.15cm]
\coordinate (p1) at (-0.55,0);
\coordinate (p2) at (-0.2,0);
%
\fill [orange!23] 
(p1) .. controls +(0,0.15) and +(0,0.15) ..  (p2)
-- (p2) .. controls +(0,-0.4) and +(0,-0.4) ..  (p1)
;
\draw (p1) .. controls +(0,-0.4) and +(0,-0.4) ..  (p2); 
\draw[very thick, red!80!black] (p1) .. controls +(0,0.15) and +(0,0.15) ..  (p2); 
\draw[very thick, red!80!black, opacity=0.2] (p1) .. controls +(0,-0.15) and +(0,-0.15) ..  (p2); 
\end{tikzpicture}
}
\bigg)\colon \Bbbk \lra A \, ,
\nonumber \\
\Delta &= 
\zz \bigg(
\tikzzbox
{
\begin{tikzpicture}[thick,scale=1.0,color=black, rotate=180, baseline=-0.5cm]
\coordinate (p1) at (-0.55,0);
\coordinate (p2) at (-0.2,0);
\coordinate (p3) at (0.2,0);
\coordinate (p4) at (0.55,0);
\coordinate (p5) at (0.175,0.8);
\coordinate (p6) at (-0.175,0.8);
%
\fill [orange!23] 
(p1) .. controls +(0,-0.15) and +(0,-0.15) ..  (p2)
-- (p2) .. controls +(0,0.35) and +(0,0.35) ..  (p3)
-- (p3) .. controls +(0,-0.15) and +(0,-0.15) ..  (p4)
-- (p4) .. controls +(0,0.5) and +(0,-0.5) ..  (p5)
-- (p5) .. controls +(0,0.15) and +(0,0.15) ..  (p6)
-- (p6) .. controls +(0,-0.5) and +(0,0.5) ..  (p1)
;
\fill [orange!38] 
(p6) .. controls +(0,-0.15) and +(0,-0.15) ..  (p5)
-- (p5) .. controls +(0,0.15) and +(0,0.15) ..  (p6)
;
\draw (p2) .. controls +(0,0.35) and +(0,0.35) ..  (p3); 
\draw (p4) .. controls +(0,0.5) and +(0,-0.5) ..  (p5); 
\draw (p6) .. controls +(0,-0.5) and +(0,0.5) ..  (p1); 
\draw[very thick, red!80!black, opacity=0.2] (p1) .. controls +(0,0.15) and +(0,0.15) ..  (p2); 
\draw[very thick, red!80!black] (p1) .. controls +(0,-0.15) and +(0,-0.15) ..  (p2); 
\draw[very thick, red!80!black, opacity=0.2] (p3) .. controls +(0,0.15) and +(0,0.15) ..  (p4); 
\draw[very thick, red!80!black] (p3) .. controls +(0,-0.15) and +(0,-0.15) ..  (p4); 
\draw[very thick, red!80!black] (p5) .. controls +(0,0.15) and +(0,0.15) ..  (p6); 
\draw[very thick, red!80!black] (p5) .. controls +(0,-0.15) and +(0,-0.15) ..  (p6); 
\end{tikzpicture}
}
\bigg)\colon A \lra A \otimes_\Bbbk A 
\, , \quad
& \eps &= 
\zz \bigg(
\tikzzbox
{
\begin{tikzpicture}[thick,scale=1.0,color=black, rotate=180, baseline=-0cm]
\coordinate (p1) at (-0.55,0);
\coordinate (p2) at (-0.2,0);
%
\fill [orange!23] 
(p1) .. controls +(0,0.15) and +(0,0.15) ..  (p2)
-- (p2) .. controls +(0,-0.4) and +(0,-0.4) ..  (p1)
;
\fill [orange!38] 
(p1) .. controls +(0,-0.15) and +(0,-0.15) ..  (p2)
-- (p2) .. controls +(0,0.15) and +(0,0.15) ..  (p1)
;
\draw (p1) .. controls +(0,-0.4) and +(0,-0.4) ..  (p2); 
\draw[very thick, red!80!black] (p1) .. controls +(0,0.15) and +(0,0.15) ..  (p2); 
\draw[very thick, red!80!black] (p1) .. controls +(0,-0.15) and +(0,-0.15) ..  (p2); 
\end{tikzpicture}
}
\bigg)\colon A \lra \Bbbk \ .
\label{eq:equiv-to-cFA-bord}
\end{align}

Finally, we need to impose the relations. If we think of $\mu$ as a product and write $a \cdot b$ instead of $\mu(a \otimes b)$, then by~\eqref{eq:R1} this product is associative,
\begin{align*}
(a \cdot b ) \cdot c 
 = 
\zz \!\left(\!
\tikzzbox
{
\begin{tikzpicture}[thick,scale=1.0,color=black, baseline=0.75cm]
\coordinate (p1) at (-0.55,0);
\coordinate (p2) at (-0.2,0);
\coordinate (p3) at (0.2,0);
\coordinate (p4) at (0.55,0);
\coordinate (p5) at (0.175,0.8);
\coordinate (p6) at (-0.175,0.8);
\coordinate (p7) at (0.95,0);
\coordinate (p8) at (1.3,0);
\coordinate (p9) at (0.2,1.6);
\coordinate (p10) at (0.55,1.6);
%
\fill [orange!23] 
(p1) .. controls +(0,0.1) and +(0,0.1) ..  (p2)
-- (p2) .. controls +(0,0.65) and +(0,0.65) ..  (p3)
-- (p3) -- (p4)
-- (p4) .. controls +(-0.55,1.95) and +(-0.1,0.5) ..  (p7)
-- (p7) -- (p8)
-- (p8) .. controls +(0,0.5) and +(0,-0.5) ..  (p10)
-- (p10) .. controls +(0,0.1) and +(0,0.1) ..  (p9)
-- (p9) .. controls +(0,-0.5) and +(0,0.75) ..  (p1)
;
%
\fill [orange!38] 
(p1) .. controls +(0,0.1) and +(0,0.1) ..  (p2) -- 
(p2) .. controls +(0,-0.1) and +(0,-0.1) ..  (p1)
;
%
\fill [orange!38] 
(p3) .. controls +(0,0.1) and +(0,0.1) ..  (p4) -- 
(p4) .. controls +(0,-0.1) and +(0,-0.1) ..  (p3)
;
%
\fill [orange!38] 
(p7) .. controls +(0,0.1) and +(0,0.1) ..  (p8) -- 
(p8) .. controls +(0,-0.1) and +(0,-0.1) ..  (p7)
;
\draw (p4) .. controls +(-0.55,1.95) and +(-0.1,0.5) ..  (p7);
\draw (p2) .. controls +(0,0.65) and +(0,0.65) ..  (p3); 
\draw (p8) .. controls +(0,0.5) and +(0,-0.5) ..  (p10); 
\draw (p9) .. controls +(0,-0.5) and +(0,0.75) ..  (p1); 
\draw[very thick, color=red!80!black] (p1) .. controls +(0,0.1) and +(0,0.1) ..  (p2); 
\draw[very thick, color=red!80!black] (p2) .. controls +(0,-0.1) and +(0,-0.1) ..  (p1); 
\draw[very thick, color=red!80!black] (p3) .. controls +(0,0.1) and +(0,0.1) .. (p4); 
\draw[very thick, color=red!80!black] (p4) .. controls +(0,-0.1) and +(0,-0.1) ..  (p3); 
\draw[very thick, color=red!80!black] (p7) .. controls +(0,0.1) and +(0,0.1) .. (p8); 
\draw[very thick, color=red!80!black] (p8) .. controls +(0,-0.1) and +(0,-0.1) ..  (p7); 
\draw[very thick, color=red!80!black] (p9) .. controls +(0,0.1) and +(0,0.1) .. (p10); 
\draw[very thick, color=red!80!black, opacity=0.2] (p10) .. controls +(0,-0.1) and +(0,-0.1) ..  (p9); 
\end{tikzpicture}
} \!
\right) \!(a \otimes b \otimes c)
 = 
\zz \!\left(\!
\tikzzbox
{
\begin{tikzpicture}[thick,scale=1.0,color=black, baseline=0.75cm]
\coordinate (p1) at (-0.55,0);
\coordinate (p2) at (-0.2,0);
\coordinate (p3) at (0.2,0);
\coordinate (p4) at (0.55,0);
\coordinate (p5) at (0.175,0.8);
\coordinate (p6) at (-0.175,0.8);
\coordinate (p7) at (-0.95,0);
\coordinate (p8) at (-1.3,0);
\coordinate (p9) at (-0.2,1.6);
\coordinate (p10) at (-0.55,1.6);
%
\fill[orange!23] 
(p8) -- (p7)
-- (p7) .. controls +(0.1,0.5) and +(0.55,1.95) ..  (p1)
-- (p1) -- (p2)
-- (p2) .. controls +(0,0.65) and +(0,0.65) ..  (p3)
-- (p3) -- (p4)
-- (p4) .. controls +(0,0.75) and +(0,-0.5) ..  (p9)
-- (p9) .. controls +(0,0.1) and +(0,0.1) .. (p10)
-- (p10) .. controls +(0,-0.5) and +(0,0.5) ..  (p8)
;
%
%
\fill [orange!38] 
(p1) .. controls +(0,0.1) and +(0,0.1) ..  (p2) -- 
(p2) .. controls +(0,-0.1) and +(0,-0.1) ..  (p1)
;
%
\fill [orange!38] 
(p3) .. controls +(0,0.1) and +(0,0.1) ..  (p4) -- 
(p4) .. controls +(0,-0.1) and +(0,-0.1) ..  (p3)
;
%
\fill [orange!38] 
(p7) .. controls +(0,0.1) and +(0,0.1) ..  (p8) -- 
(p8) .. controls +(0,-0.1) and +(0,-0.1) ..  (p7)
;
\draw (p7) .. controls +(0.1,0.5) and +(0.55,1.95) ..  (p1); 
\draw (p2) .. controls +(0,0.65) and +(0,0.65) ..  (p3); 
\draw (p4) .. controls +(0,0.75) and +(0,-0.5) ..  (p9); 
\draw (p10) .. controls +(0,-0.5) and +(0,0.5) ..  (p8); 
\draw[very thick, color=red!80!black] (p1) .. controls +(0,0.1) and +(0,0.1) ..  (p2); 
\draw[very thick, color=red!80!black] (p2) .. controls +(0,-0.1) and +(0,-0.1) ..  (p1); 
\draw[very thick, color=red!80!black] (p3) .. controls +(0,0.1) and +(0,0.1) .. (p4); 
\draw[very thick, color=red!80!black] (p4) .. controls +(0,-0.1) and +(0,-0.1) ..  (p3); 
\draw[very thick, color=red!80!black] (p7) .. controls +(0,0.1) and +(0,0.1) .. (p8); 
\draw[very thick, color=red!80!black] (p8) .. controls +(0,-0.1) and +(0,-0.1) ..  (p7); 
\draw[very thick, color=red!80!black] (p9) .. controls +(0,0.1) and +(0,0.1) .. (p10); 
\draw[very thick, color=red!80!black, opacity=0.2] (p10) .. controls +(0,-0.1) and +(0,-0.1) ..  (p9); 
\end{tikzpicture}
} \!
\right) \! (a \otimes b \otimes c)
 = a \cdot ( b  \cdot c ) \, ,
\end{align*}
and it has a unit, namely $u_A := \zz(\tikzzbox
{
\begin{tikzpicture}[thick,scale=0.45,color=black, baseline=-0.15cm]
\coordinate (p1) at (-0.55,0);
\coordinate (p2) at (-0.2,0);
%
\fill [orange!23] 
(p1) .. controls +(0,0.15) and +(0,0.15) ..  (p2)
-- (p2) .. controls +(0,-0.4) and +(0,-0.4) ..  (p1)
;
\draw (p1) .. controls +(0,-0.4) and +(0,-0.4) ..  (p2); 
\draw[thick, red!80!black] (p1) .. controls +(0,0.15) and +(0,0.15) ..  (p2); 
\draw[thick, red!80!black, opacity=0.2] (p1) .. controls +(0,-0.15) and +(0,-0.15) ..  (p2); 
\end{tikzpicture}
})(1)
$. Indeed, by  relation~\eqref{eq:R2} we have
\be 
u_A \cdot a = 
\zz(\tikzzbox
{
\begin{tikzpicture}[thick,scale=1.0,color=black, baseline=-0.15cm]
\coordinate (p1) at (-0.55,0);
\coordinate (p2) at (-0.2,0);
%
\fill [orange!23] 
(p1) .. controls +(0,0.15) and +(0,0.15) ..  (p2)
-- (p2) .. controls +(0,-0.4) and +(0,-0.4) ..  (p1)
;
\draw (p1) .. controls +(0,-0.4) and +(0,-0.4) ..  (p2); 
\draw[thick, red!80!black] (p1) .. controls +(0,0.15) and +(0,0.15) ..  (p2); 
\draw[thick, red!80!black, opacity=0.2] (p1) .. controls +(0,-0.15) and +(0,-0.15) ..  (p2); 
\end{tikzpicture}
})(1)
\cdot 
\zz\bigg(
\tikzzbox
{
\begin{tikzpicture}[thick,scale=1.0,color=black, baseline=0.3cm]
\coordinate (p1) at (-0.55,0);
\coordinate (p2) at (-0.2,0);
\coordinate (p3) at (-0.55,0.8);
\coordinate (p4) at (-0.2,0.8);
%
\fill [orange!23] 
(p1) -- (p2)
-- (p4)
.. controls +(0,0.1) and +(0,0.1) .. (p3)
-- (p1)
;
%
\fill [orange!38] 
(p1) .. controls +(0,0.1) and +(0,0.1) ..  (p2) -- 
(p2) .. controls +(0,-0.1) and +(0,-0.1) ..  (p1)
;
\draw (p2) -- (p4); 
\draw (p3) -- (p1); 
\draw[very thick, color=red!80!black] (p3) .. controls +(0,0.1) and +(0,0.1) ..  (p4); 
\draw[very thick, color=red!80!black, opacity=0.2] (p3) .. controls +(0,-0.1) and +(0,-0.1) ..  (p4); 
\draw[very thick, color=red!80!black] (p1) .. controls +(0,0.1) and +(0,0.1) ..  (p2); 
\draw[very thick, color=red!80!black] (p1) .. controls +(0,-0.1) and +(0,-0.1) ..  (p2); 
\end{tikzpicture}
} 
\bigg)
(a)
= 
\zz\bigg(
\tikzzbox
{
\begin{tikzpicture}[thick,scale=1.0,color=black, baseline=0.3cm]
\coordinate (p1) at (-0.55,0);
\coordinate (p2) at (-0.2,0);
\coordinate (p3) at (0.2,0);
\coordinate (p4) at (0.55,0);
\coordinate (p5) at (0.175,0.8);
\coordinate (p6) at (-0.175,0.8);
%
\fill [orange!23] 
(p1) .. controls +(0,-0.4) and +(0,-0.4) ..  (p2)
-- (p2) .. controls +(0,0.35) and +(0,0.35) ..  (p3)
-- (p3) -- (p4)
-- (p4) .. controls +(0,0.5) and +(0,-0.5) ..  (p5)
-- (p5) .. controls +(0,0.1) and +(0,0.1) .. (p6)
-- (p6) .. controls +(0,-0.5) and +(0,0.5) ..  (p1)
;
%
\fill [orange!38] 
(p3) .. controls +(0,0.1) and +(0,0.1) ..  (p4) -- 
(p4) .. controls +(0,-0.1) and +(0,-0.1) ..  (p3)
;
\draw (p1) .. controls +(0,-0.4) and +(0,-0.4) ..  (p2); 
\draw (p2) .. controls +(0,0.35) and +(0,0.35) ..  (p3); 
\draw (p4) .. controls +(0,0.5) and +(0,-0.5) ..  (p5); 
\draw (p6) .. controls +(0,-0.5) and +(0,0.5) ..  (p1); 
\draw[very thick, color=red!80!black] (p3) .. controls +(0,0.1) and +(0,0.1) ..  (p4); 
\draw[very thick, color=red!80!black] (p3) .. controls +(0,-0.1) and +(0,-0.1) ..  (p4); 
\draw[very thick, color=red!80!black] (p5) .. controls +(0,0.1) and +(0,0.1) ..  (p6); 
\draw[very thick, color=red!80!black, opacity=0.2] (p5) .. controls +(0,-0.1) and +(0,-0.1) ..  (p6); 
\end{tikzpicture}
} 
\bigg)
(a)
= 
\zz\bigg(
\tikzzbox
{
\begin{tikzpicture}[thick,scale=1.0,color=black, baseline=0.3cm]
\coordinate (p1) at (-0.55,0);
\coordinate (p2) at (-0.2,0);
\coordinate (p3) at (-0.55,0.8);
\coordinate (p4) at (-0.2,0.8);
%
\fill [orange!23] 
(p1) -- (p2)
-- (p4)
.. controls +(0,0.1) and +(0,0.1) .. (p3)
-- (p1)
;
%
\fill [orange!38] 
(p1) .. controls +(0,0.1) and +(0,0.1) ..  (p2) -- 
(p2) .. controls +(0,-0.1) and +(0,-0.1) ..  (p1)
;
\draw (p2) -- (p4); 
\draw (p3) -- (p1); 
\draw[very thick, color=red!80!black] (p3) .. controls +(0,0.1) and +(0,0.1) ..  (p4); 
\draw[very thick, color=red!80!black, opacity=0.2] (p3) .. controls +(0,-0.1) and +(0,-0.1) ..  (p4); 
\draw[very thick, color=red!80!black] (p1) .. controls +(0,0.1) and +(0,0.1) ..  (p2); 
\draw[very thick, color=red!80!black] (p1) .. controls +(0,-0.1) and +(0,-0.1) ..  (p2); 
\end{tikzpicture}
} 
\bigg)
(a)
=
a \, ,
\ee 
and $a \cdot u_A = a$ follows analogously. 
So far we only used functoriality and monoidality of $\zz$. Symmetry of $\zz$, i.\,e.\ that $\zz$ maps the symmetric braiding in $\Bord_2$ to the symmetric braiding in $\Vectk$, is used to show that \eqref{eq:R4} implies commutativity of $A$:
\be
a\cdot b
= 
\zz \!\left(
\tikzzbox
{
\begin{tikzpicture}[thick, scale=1.0, color=black, baseline=0cm]
\coordinate (p1) at (-0.55,0);
\coordinate (p2) at (-0.2,0);
\coordinate (p3) at (0.2,0);
\coordinate (p4) at (0.55,0);
\coordinate (p5) at (0.175,0.8);
\coordinate (p6) at (-0.175,0.8);
\coordinate (q1) at (-0.55,-0.8);
\coordinate (q2) at (-0.2,-0.8);
\coordinate (q3) at (0.2,-0.8);
\coordinate (q4) at (0.55,-0.8);
%
\fill [orange!23] 
(p2) -- (q2) -- (q1) -- 
(p1) -- (p2)
-- (p2) .. controls +(0,0.35) and +(0,0.35) ..  (p3)
-- (q3)
-- (q3) -- (q4)
-- (p4) .. controls +(0,0.5) and +(0,-0.5) ..  (p5)
-- (p5) .. controls +(0,0.1) and +(0,0.1) .. (p6)
-- (p6) .. controls +(0,-0.5) and +(0,0.5) ..  (p1)
;
%
\fill [orange!38] 
(q3) .. controls +(0,0.1) and +(0,0.1) ..  (q4) --
(q4) .. controls +(0,-0.1) and +(0,-0.1) ..  (q3);
%
\fill [orange!38] 
(q1) .. controls +(0,0.1) and +(0,0.1) ..  (q2) --
(q2) .. controls +(0,-0.1) and +(0,-0.1) ..  (q1);
\draw (p2) .. controls +(0,0.35) and +(0,0.35) ..  (p3); 
\draw (p4) .. controls +(0,0.5) and +(0,-0.5) .. (p5); 
\draw (p6) .. controls +(0,-0.5) and +(0,0.5) ..  (p1); 
\draw (p1) -- (q1); 
\draw (p2) -- (q2); 
\draw (p4) -- (q4); 
\draw (p3) -- (q3); 
\draw[very thick, color=red!80!black] (p5) .. controls +(0,0.1) and +(0,0.1) ..  (p6); 
\draw[very thick, color=red!80!black, opacity=0.2] (p5) .. controls +(0,-0.1) and +(0,-0.1) ..  (p6); 
\draw[very thick, color=red!80!black] (q1) .. controls +(0,0.1) and +(0,0.1) ..  (q2); 
\draw[very thick, color=red!80!black] (q1) .. controls +(0,-0.1) and +(0,-0.1) ..  (q2); 
\draw[very thick, color=red!80!black] (q3) .. controls +(0,0.1) and +(0,0.1) ..  (q4); 
\draw[very thick, color=red!80!black] (q3) .. controls +(0,-0.1) and +(0,-0.1) ..  (q4); 
\end{tikzpicture}
}
\right) \!
(a \otimes b)
= 
\zz \!\left(
\tikzzbox
{
\begin{tikzpicture}[thick, scale=1.0, color=black, baseline=0cm]
\coordinate (p1) at (-0.55,0);
\coordinate (p2) at (-0.2,0);
\coordinate (p3) at (0.2,0);
\coordinate (p4) at (0.55,0);
\coordinate (p5) at (0.175,0.8);
\coordinate (p6) at (-0.175,0.8);
\coordinate (q1) at (-0.55,-0.8);
\coordinate (q2) at (-0.2,-0.8);
\coordinate (q3) at (0.2,-0.8);
\coordinate (q4) at (0.55,-0.8);
\coordinate (q5) at (0.2,-0.8);
\coordinate (q6) at (0.55,-0.8);
%
\fill [orange!23] 
(p1) -- (p2)
-- (p2) .. controls +(0,0.35) and +(0,0.35) ..  (p3)
-- (p3) -- (p4)
-- (p4) .. controls +(0,0.5) and +(0,-0.5) ..  (p5)
-- (p5) .. controls +(0,0.1) and +(0,0.1) .. (p6)
-- (p6) .. controls +(0,-0.5) and +(0,0.5) ..  (p1)
;
\fill [orange!23] 
(q4) -- (p2) -- (p1) -- (q3);
\fill [orange!23] 
(q1) -- (p3) -- (p4) -- (q2);
%
\fill [orange!38] 
(q1) .. controls +(0,0.1) and +(0,0.1) ..  (q2) --
(q2) .. controls +(0,-0.1) and +(0,-0.1) ..  (q1);
%
\fill [orange!38] 
(q3) .. controls +(0,0.1) and +(0,0.1) ..  (q4) --
(q4) .. controls +(0,-0.1) and +(0,-0.1) ..  (q3);
\draw (q4) -- (p2);
\draw (q4) -- (p2) .. controls +(0,0.35) and +(0,0.35) ..  (p3) -- (q1); 
\draw (q2) -- (p4) .. controls +(0,0.5) and +(0,-0.5) .. (p5); 
\draw (p6) .. controls +(0,-0.5) and +(0,0.5) ..  (p1) -- (q3); 
\draw[very thick, color=red!80!black] (p5) .. controls +(0,0.1) and +(0,0.1) ..  (p6); 
\draw[very thick, color=red!80!black, opacity=0.2] (p5) .. controls +(0,-0.1) and +(0,-0.1) ..  (p6); 
\draw[very thick, color=red!80!black] (q1) .. controls +(0,0.1) and +(0,0.1) ..  (q2); 
\draw[very thick, color=red!80!black] (q1) .. controls +(0,-0.1) and +(0,-0.1) ..  (q2); 
\draw[very thick, color=red!80!black] (q3) .. controls +(0,0.1) and +(0,0.1) ..  (q4); 
\draw[very thick, color=red!80!black] (q3) .. controls +(0,-0.1) and +(0,-0.1) ..  (q4); 
\end{tikzpicture}
}
\right) \!
(a \otimes b)
= 
b \cdot a
\, . 
\ee

We can carry on with this exercise, but to avoid repetition, let us define the notion of a Frobenius algebra right away:
\begin{definition}
\label{def:Frob}
A \textsl{Frobenius algebra over~$\Bbbk$} is a $\Bbbk$-vector space $A$ with
\begin{itemize}
\item
an associative unital algebra structure $(A, \mu\colon A\otimes_\Bbbk A \to A,  \eta\colon \Bbbk \to A)$, i.\,e. 
$
\mu\circ(\mu \otimes \id) = \mu \circ (\id \otimes \mu)
$
and 
$
\mu \circ (\eta \otimes \id) = \id = \mu \circ (\id \otimes \eta)
$, 
\item
a coassociative counital coalgebra structure $(A, \Delta\colon A \to A\otimes_\Bbbk A,  \eps\colon A \to \Bbbk)$, i.\,e. 
$
(\Delta \otimes \id) \circ \Delta =  (\id \otimes \Delta) \circ \Delta
$
and 
$
(\eps \otimes \id) \circ \Delta = \id =  (\id \otimes \eps) \circ \Delta
$, 
\item
such that 
$
(\mu \otimes \id) \circ (\id \otimes \Delta) = \Delta \circ \mu = (\id \otimes \mu) \circ ( \Delta \otimes \id)
$. 
\end{itemize}
\end{definition}

A \textsl{morphism of Frobenius algebras} $\psi\colon A \to A'$ 
is a $\Bbbk$-linear map which is simultaneously an algebra map and a coalgebra map, i.\,e. 
\be 
\mu' \circ (\psi \otimes \psi ) = \psi \circ \mu
\, , \quad 
\eta' = \psi \circ \eta
\, , \quad 
(\psi \otimes \psi) \circ \Delta = \Delta' \circ \psi
\, , \quad 
\eps = \eps' \circ \psi
\, . 
\ee 
Frobenius algebras and their morphisms 
form a category, and we denote the full subcategory of commutative Frobenius algebras by $\textrm{comFrob}_\Bbbk$.
Now we can summarise our analysis of the generators and relations as follows:

\begin{proposition}\label{prop:genrel-cFA}
The assignments \eqref{eq:equiv-to-cFA-S1} and \eqref{eq:equiv-to-cFA-bord} define an equivalence of categories $\mathrm{Fun}_{\otimes,\mathrm{sym}}\big( \,\mathcal{F}(G_0,G_1,G_2) \, , \,\Vectk \,\big) \lra \textrm{comFrob}_\Bbbk$.
\end{proposition}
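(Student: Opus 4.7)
The plan is to apply the universal property of $\mathcal{F}(G_0,G_1,G_2)$ from Definition~\ref{def:freesymmon} and read off the equivalence directly from the data. First, by that universal property a symmetric monoidal functor $\zz\colon\mathcal{F}(G_0,G_1,G_2)\to\Vectk$ is determined uniquely up to monoidal isomorphism by a choice of $\Bbbk$-vector space $A=\zz(S^1)$, together with four linear maps $\mu,\eta,\Delta,\varepsilon$ with domains and codomains as in~\eqref{eq:equiv-to-cFA-bord}, subject to the condition that $\zz$ send each pair in $G_2$ to an equality in $\Vectk$.

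Next I would translate the relations~\eqref{eq:R1}--\eqref{eq:R4} into algebraic identities on $(A,\mu,\eta,\Delta,\varepsilon)$. The computation sketched in the text just before the proposition shows that~\eqref{eq:R1} gives associativity of $\mu$ (and, by reading the same pictures upside down, coassociativity of $\Delta$); that~\eqref{eq:R2} gives the unit axiom for $\eta$ and, dually, the counit axiom for $\varepsilon$; and that~\eqref{eq:R4} gives commutativity $\mu\circ\beta_{A,A}=\mu$, where one crucially uses symmetry of $\zz$ to identify $\zz$ of the braiding bordism with the swap on $A\otimes_\Bbbk A$. The remaining relation~\eqref{eq:R3} translates into the Frobenius compatibility $(\mu\otimes\id)\circ(\id\otimes\Delta)=\Delta\circ\mu=(\id\otimes\mu)\circ(\Delta\otimes\id)$, by reading off the three pictures in~\eqref{eq:R3} and applying $\zz$ monoidally. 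Together with Definition~\ref{def:Frob} this shows that the data from $\zz$ satisfy precisely the axioms of a commutative Frobenius algebra, and conversely any commutative Frobenius algebra $(A,\mu,\eta,\Delta,\varepsilon)$ gives a symmetric monoidal functor by Definition~\ref{def:freesymmon}, so the assignment is essentially surjective.

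For fullness and faithfulness, I invoke the second half of the universal property: a monoidal natural transformation between two such functors $\zz,\zz'$ is uniquely determined by its component $\psi\colon\zz(S^1)\to\zz'(S^1)$ on the generating object, and it extends to a monoidal transformation iff it is compatible with the morphism generators in $G_1$. Applying $\zz$ and $\zz'$ to the squares~\eqref{eq:freesmc-morph} for each of the four bordisms in $G_1$ gives exactly the four conditions $\mu'\circ(\psi\otimes\psi)=\psi\circ\mu$, $\eta'=\psi\circ\eta$, $(\psi\otimes\psi)\circ\Delta=\Delta'\circ\psi$, and $\varepsilon=\varepsilon'\circ\psi$, i.\,e.\ $\psi$ is a morphism of Frobenius algebras. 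This gives a bijection on Hom-sets, completing the equivalence.

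The only genuinely nontrivial step is verifying that~\eqref{eq:R3} really yields the Frobenius relation in the form stated in Definition~\ref{def:Frob}; the other checks are routine picture-chasing and amount to the computations partly carried out in the paragraphs preceding the proposition. In particular, no finite-dimensionality of $A$ needs to be assumed at this stage: it will be forced by Proposition~\ref{prop:findim} (or equivalently by the existence of the nondegenerate pairing $\varepsilon\circ\mu$ built from $\eta,\Delta$ via Frobenius), but this is not needed to establish the equivalence with $\mathrm{comFrob}_\Bbbk$ as a category.
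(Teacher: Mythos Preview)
Your proposal is correct and follows essentially the same approach as the paper: the paper does not give a separate proof of the proposition but presents it as a summary of the preceding analysis of how the relations $G_2$ translate into the axioms of a commutative Frobenius algebra, and you have simply spelled out that analysis in full, including the treatment of morphisms via~\eqref{eq:freesmc-morph}, which the paper leaves implicit.
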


Combining this proposition with Theorem \ref{thm:Bord2-freegen} proves Theorem~\ref{thm:2dTQFT}. 

Note that -- as for $\mathcal{DP}_\Bbbk$ -- by Lemma \ref{lem:sym-mon-inv}, 
Proposition \ref{prop:genrel-cFA} implies that morphisms between Frobenius algebras are invertible. 

\begin{remark}\label{rem:general-target}
The proof of Theorem \ref{thm:2dTQFT} generalises to symmetric monoidal functors $\zz\colon \Bord_2 \to \mathcal C$ for any symmetric monoidal category~$\mathcal C$, giving an equivalence of groupoids of such~$\zz$ and commutative Frobenius algebras internal to~$\mathcal C$. 
\end{remark}

While the definition of a Frobenius algebra given above is nicely symmetric and fits perfectly on the generators-and-relations formulation of $\Bord_2$, it is not very economic. Indeed, the original definition of a Frobenius algebra was different:

\medskip

\noindent
\textbf{Definition \ref{def:Frob}} (economy version)\textbf{.} A \textsl{Frobenius algebra over~$\Bbbk$} is a unital associative $\Bbbk$-algebra together with a nondegenerate and invariant bilinear form $\langle -,- \rangle\colon A \times A \to \Bbbk$.
 
\medskip

Here, invariant means that $\langle a \cdot b,c \rangle = \langle a, b \cdot c \rangle$ for all $a,b,c \in A$. 
We have

\begin{proposition}
Definition \ref{def:Frob} and its economy version are equivalent.
\end{proposition}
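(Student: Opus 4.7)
The plan is to construct explicit maps in both directions between the two sets of data and then verify compatibility, with the key technical tool being the (co)pairing/(co)evaluation dictionary one sees already in the 1-dimensional case.

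\medskip

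\textbf{Direction 1: Definition \ref{def:Frob} $\Rightarrow$ economy version.}
Given $(A, \mu, \eta, \Delta, \varepsilon)$ as in Definition \ref{def:Frob}, I would define
\[
\langle a, b \rangle := \varepsilon(\mu(a \otimes b)) \, .
\]
Invariance $\langle a\cdot b, c\rangle = \langle a, b \cdot c\rangle$ is immediate from associativity of $\mu$. To establish nondegeneracy, set $\gamma := (\Delta \circ \eta)(1) \in A \otimes_\Bbbk A$. Using the Frobenius relation $\Delta \circ \mu = (\id \otimes \mu) \circ (\Delta \otimes \id)$ applied to $\eta(1)\otimes a$, one computes $\Delta(a) = (\id \otimes \mu)(\gamma \otimes a)$; then the counit axiom $(\id \otimes \varepsilon) \circ \Delta = \id$ rewrites as the Zorro move
\[
(\id_A \otimes \langle -, - \rangle)(\gamma \otimes \id_A) = \id_A \, ,
\]
and symmetrically for the other side. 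Exactly as in the proof of Proposition \ref{prop:findim}, this forces $A$ to be finite-dimensional and $\langle -, - \rangle$ to be nondegenerate.

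\medskip

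\textbf{Direction 2: economy version $\Rightarrow$ Definition \ref{def:Frob}.}
Given $(A, \mu, \eta, \langle -, - \rangle)$, nondegeneracy of the pairing yields a unique copairing $\gamma \in A \otimes_\Bbbk A$ satisfying both Zorro moves (and in particular $A$ is finite-dimensional). I would define
\[
\varepsilon(a) := \langle \eta(1), a \rangle
\, , \qquad
\Delta(a) := (\mu \otimes \id_A)\big(a \otimes \gamma\big) \, .
\]
The first step is to verify the ``$A$-bilinearity'' of the copairing, namely that
\[
(\mu \otimes \id_A)(a \otimes \gamma) = (\id_A \otimes \mu)(\gamma \otimes a)
\qquad \text{for all } a \in A \, ;
\]
this is a routine consequence of invariance, obtained by pairing both sides with an arbitrary $(b \otimes c)$ via $\langle -, - \rangle \otimes \langle -, - \rangle$ and using uniqueness of $\gamma$. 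Granted this identity, the Frobenius relations $\Delta \circ \mu = (\id \otimes \mu) \circ (\Delta \otimes \id) = (\mu \otimes \id) \circ (\id \otimes \Delta)$ become direct computations. The counit property $(\varepsilon \otimes \id) \circ \Delta = \id = (\id \otimes \varepsilon) \circ \Delta$ reduces to the Zorro moves via the identity $\varepsilon \circ \mu = \langle -, - \rangle$, which itself follows from invariance: $\varepsilon(\mu(a \otimes b)) = \langle 1, a \cdot b \rangle = \langle 1 \cdot a, b\rangle = \langle a, b \rangle$. Coassociativity of $\Delta$ then follows formally from associativity of $\mu$ together with the ``$A$-bilinearity'' of $\gamma$ used twice.

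\medskip

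\textbf{Mutual inverseness.}
Finally I would check that the two constructions are inverse to one another. Starting from $(\mu, \eta, \Delta, \varepsilon)$, pass to $\langle -, - \rangle = \varepsilon \circ \mu$, then back; by uniqueness of the copairing dual to a nondegenerate pairing, the recovered $\gamma$ equals $(\Delta\circ\eta)(1)$, so the recovered $\Delta$ and $\varepsilon$ agree with the originals. Conversely, starting from $(\mu, \eta, \langle -, - \rangle)$, the rebuilt pairing is $\varepsilon \circ \mu = \langle 1, \mu(-,-) \rangle = \langle -, - \rangle$ by invariance.

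\medskip

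The only mildly delicate point is the ``$A$-bilinearity'' of $\gamma$ in Direction 2; once this is in hand, everything else is bookkeeping that mirrors the duality arguments already used for the cylinder in Section \ref{subsec:basicprop}. Note also that finite-dimensionality of $A$ is forced automatically by nondegeneracy of the pairing in either formulation, so it need not be included as a separate hypothesis in the economy version.
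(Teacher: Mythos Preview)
Your proposal is correct and follows essentially the same route as the paper's sketch: pairing $\langle a,b\rangle = \varepsilon(a\cdot b)$ with copairing $\Delta(u_A)$ in one direction, and $\Delta(a) = (\mu\otimes\id)(a\otimes\gamma)$ with $\gamma$ the copairing of $\langle-,-\rangle$ in the other. Your counit $\varepsilon(a)=\langle 1,a\rangle$ differs cosmetically from the paper's $\varepsilon(a)=\langle a,1\rangle$, but invariance makes the two agree; the one small overreach is your closing remark that nondegeneracy alone forces finite-dimensionality --- this requires nondegeneracy in the strong sense (existence of a copairing in $A\otimes_\Bbbk A$), which the paper also tacitly assumes.
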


\begin{proof}[Sketch of proof]
We write $u_A := \eta(1)$ for the unit of $A$.

To arrive from the full version at the economy version, one takes the pairing $\langle a,b \rangle := \eps(a \cdot b)$. This is clearly invariant. To see nondegeneracy, make the ansatz $c := \Delta(u_A) \in A \otimes_\Bbbk A$ and verify that it is the copairing for $\langle -,- \rangle$.

To upgrade from economy to the full version, one needs to come up with a coproduct and a counit. 
The counit is easy, just set $\eps(a) := \langle a ,u_A\rangle$. 
For the coproduct, let $c \in A \otimes_\Bbbk A$ be the (uniquely defined) copairing for $\langle -,- \rangle$. The ansatz for the coproduct is $\Delta(a) := (\mu \otimes \id)(a \otimes c)$. 
It is a straightforward but tedious calculation to verify all the properties of the full version of Definition \ref{def:Frob}.
For more details, see e.\,g.~\cite[Ch.\,2]{Kockbook}.
\end{proof}

There is one merit in the full version of Definition \ref{def:Frob}, and that is that one is immediately led to the right notion of morphisms between Frobenius algebras (right for the application to 2-dimensional TQFT, that is). The economy version would suggest algebra homomorphisms which preserve the pairing as morphisms, but this would still allow injective maps which are not bijective, and we would not get the equivalence in Proposition \ref{prop:genrel-cFA} with this notion of Frobenius algebra morphisms. 

\medskip

We conclude our discussion of 2d TQFTs with a list of examples of Frobenius algebras.

\begin{examples}
\label{ex:2dTQFT}
\begin{enumerate}
\item
Let~$G$ be a finite group. 
Its group algebra $A := \Bbbk G$ together with the pairing defined on basis elements as
\be
\big\langle g,h \big\rangle
= 
\delta_{g,h^{-1}}
\ee
and linearly extended to~$A$ is a Frobenius algebra. 
It is commutative iff~$G$ is abelian. 
\item
Let $B$ be a semisimple $\Bbbk$-algebra. By the Artin-Wedderburn Theorem, every such algebra
is isomorphic to a direct sum of matrix algebras over division algebras~$D_i$ of finite dimension over~$\Bbbk$, 
\be
B \cong \bigoplus_{i\in I}\textrm{Mat}_{n_i\times n_i}(D_i) \, . 
\ee
If $\Bbbk = \C$ then $D_i = \C$ for all $i\in I$, and 
\be
\big\langle M,M' \big\rangle
= 
\operatorname{Tr}(MM')
\ee
gives a nondegenerate pairing that makes~$B$ a Frobenius algebra. 
Its centre $A := Z(B)$ is a commutative Frobenius algebra and it describes (via Theorem \ref{thm:2dTQFT}) the ``state-sum model'' of a 2-dimensional TQFT build from $B$ \cite{BP1992, FHK, lp0602047}.
\item
Let the zero set of a polynomial $W \in \C[x_1,\ldots,x_N]$ describe an isolated singularity, i.\,e.~the quotient
\be
A:= 
\C[x_1,\ldots,x_N]/(\partial_{x_1}W, \ldots, \partial_{x_N}W)
\ee
is finite-dimensional over~$\C$. 
Multiplication of polynomials induces a commutative $\C$-algebra structure on~$A$, which is typically non-semisimple (for example if $W = x^d$ and $d\in \Z_{\geqslant 4}$). 
Together with the pairing
\be
\label{eq:res}
\big\langle \phi,\psi \big\rangle
= 
\Res \left[ \frac{\phi(x) \cdot \psi(x) \, \operatorname{d}\! x}{\partial_{x_1} W, \ldots, \partial_{x_N} W} \right] ,
\ee
$A$ is a commutative Frobenius algebra. 
The TQFTs associated to such algebras are called ``affine Landau-Ginzburg models'',
see \cite{v1991, hl0404} and \cite{cm1208.1481} which uses the same notation as in \eqref{eq:res}.
\item 
For a real compact oriented $d$-dimensional manifold~$X$, let 
\be
\label{eq:sigmamodelA}
A := \bigoplus_{p=0}^d H^p_{\textrm{dR}}(X)
\ee
denote its de Rham cohomology. 
Together with the wedge product, $A$ is a unital associative algebra. 
Furthermore, Poincar\'{e} duality says that
\be
\big\langle \alpha,\beta \big\rangle
= 
\int_X \alpha \wedge \beta
\ee
is a nondegenerate pairing, giving~$A$ the structure of a Frobenius algebra. 

However, since $\alpha \wedge \beta = (-1)^{pq} \beta \wedge \alpha$ for $\alpha \in H^p_{\textrm{dR}}(X)$ and $\beta \in H^q_{\textrm{dR}}(X)$, the algebra~$A$ is only \textsl{graded commutative} when viewed as an object in $\Vect_\R$. 
On the other hand, viewed as an object in the category of $\Z_2$-graded vector spaces $\Vect_\R^{\Z_2}$ (or of $\Z$-graded vector spaces $\Vect_\R^{\Z}$), where the braiding isomorphism comes with a Koszul sign (aka super-vector spaces)~$A$ is commutative. 

In the spirit of Remark \ref{rem:general-target}, 
the algebra~\eqref{eq:sigmamodelA} describes a closed TQFT with values in $\Vect_\R^{\Z_2}$ (or $\Vect_\R^{\Z}$). 
If~$X$ is a K\"ahler manifold, this TQFT is called the ``A-twisted sigma model'' \cite{WittenSigmaModel, mirrorbook}. 
There is also a related construction involving Calabi-Yau manifolds which give ``B-twisted sigma models''. 

\item
Not every unital associative algebra $A$ can be made into a Frobenius algebra by an appropriate choice of pairing. An example is the algebra $T$ of upper triangular $(2{\times}2)$-matrices with entries in~$\Bbbk$. One way to see this is to consider~$T$ as a left module over itself. The socle of $T$ is 1-dimensional, while the socle of the dual $T$-module $T^*$ is 2-dimensional. If there were an invariant nondegenerate pairing, we would have $T \cong T^*$ as $T$-modules, which cannot be.
\end{enumerate}
\end{examples}

\subsection{Three-dimensional TQFTs}
\label{sec:extendedTQFTs}

To treat 3-dimensional TQFTs properly with only a \textsl{finite} amount of data, one is lead to pass from phrasing everything in terms of symmetric monoidal categories to using ``symmetric monoidal 2-categories''. Instead of introducing the formalism thoroughly, we will merely attempt to motivate why this more elaborate setting is forced on us. Precision and details will be two early victims of our approach.

\subsubsection*{Bordisms for extended TQFTs}

We start by noticing that from $\Bord_3$ onwards, the objects are no longer tensor-generated by a finite set. Indeed, since the tensor product in $\Bord_n$ is disjoint union, we need one generator for each diffeomorphism class of connected (oriented, closed, compact) $(n-1)$-manifolds. For $n \geqslant 3$, there are infinitely many of these. In the language of Section \ref{sec:gen+rel_def}, already the set $G_0$ is infinite. 
For example, in the case of $\Bord_3$ the objects are tensor-generated by connected, oriented, closed, compact surfaces of genus~$g$ for all $g\in\N$. 

Therefore, in a generators-and-relations approach to symmetric monoidal functors $\Bord_3 \to \Vectk$ one has to do battle with an infinite amount of data subject to an infinite amount of conditions. 
This is not to say that such an algebraic description cannot be given. Indeed, the paper \cite{Juhasz2014} does just that (in arbitrary dimension~$n$).

\medskip

One way to arrive at a finite set of generators subject to a finite set of conditions is to introduce another categorical layer to 3-dimensional bordisms. 
Roughly, we start with objects already in dimension one, 2-bordisms between them as morphisms, and ``morphisms between morphisms''  are 3-dimensional manifolds with corners. 

More generally, TQFTs in dimension $n$ which also work with manifolds of dimensions other than $n$ and $n-1$ are called \textsl{extended TQFTs}. One way to remember which manifolds occur in a given bordism category is to rename $\Bord_n$ to $\Bord_{n,n-1}$, and to just add consecutive numbers to this list for extended TQFTs. For example, we will soon look at $\Bord_{3,2,1}$. 
If one considers all dimensions between~$0$ and~$n$, i.\,e.~the higher bordism category is $\Bord_{3,2,1,0}$ for $n=3$, the TQFT is called ``extended down to points''. 

Let us stay in general dimension $n$ for a little while before restricting ourselves again to $n=3$.
Since $\Bord_{n,n-1,n-2}$ is a three-tiered structure, it will be a bicategory. 
A little more precisely (but still wrong, see \cite{spthesis} for precision), it is given by
\begin{enumerate}
\item objects: oriented closed compact $(n-2)$-manifolds, 
\item 1-morphisms: $(n-1)$-dimensional compact oriented bordisms between two given such $(n-2)$-manifolds (not just up to diffeomorphism), 
\item 2-morphisms: $n$-dimensional compact oriented bordisms (which may now have corners) between two such $(n-1)$-manifolds, now considered up to diffeomorphisms compatible with boundary parametrisations.
\end{enumerate}

The reason this does not quite work was already alluded to in Remark \ref{rem:luck-with-gluing}: in order to have a well-defined gluing one should work with collars. 
That is, one should really take the $(n-2)$- and $(n-1)$-manifolds above to sit inside a little neighbourhood of surrounding $n$-manifold. 
The correct definition of the bicategory $\Bord_{n,n-1,n-2}$ is quite technical and can be found in \cite[Sect.\,3.1]{spthesis}. 
If one wants to do more with extended TQFTs than oversimplifying other people's mathematical results, one will have to wade through the details. 
In our discussion we will stick to oversimplification.

\subsubsection*{Targets for extended TQFTs}

Let us take a step back even from our familiar symmetric monoidal functors $\Bord_{n,n-1} \to \Vectk$. 	Namely, suppose we are merely interested in invariants of (closed, oriented, compact) $n$-manifolds. For example, we could look for a prescription which assigns a number $\zz(M) \in \Bbbk$ to each such $n$-manifold~$M$.
In keeping with the systematics of the notation just introduced, but risking total confusion, we now write $\Bord_n$ for the 0-category (= set) of diffeomorphism classes of closed oriented compact $n$-manifolds. Our invariants define a function
\be
	\zz\colon \Bord_n \lra \Bbbk \, .
\ee

One may then have the idea that such invariants can be computed from cutting $M$ along $(n-1)$-manifolds into simpler building blocks. In this way one would be lead to the category $\Bord_{n,n-1}$ and to consider functors $\zz$ out of it. It remains to find a good target category.

Let us write $\emptyset_k$ for the empty set viewed as a $k$-manifold. 
Then $\Bord_{n,n-1}$ contains all the closed manifolds as endomorphisms of $\emptyset_{n-1}$:
\be
	\Bord_n = \End_{\Bord_{n,n-1}}(\emptyset_{n-1}) \, .
\ee
We thus posit that whatever the target of a functor $\zz$ out of $\Bord_{n,n-1}$ is, $\zz(\emptyset_{n-1} \xrightarrow{M} \emptyset_{n-1})$ should be an element of $\Bbbk$.

Hence one needs to find a category $\mathcal{C}$ with a preferred object $*$ such that the endomorphisms of $*$ are $\Bbbk$. From this point of view $\Vectk$ with $* = \Bbbk$ is but one option. Super-vector spaces, representations of finite groups, or in fact any $\Bbbk$-linear symmetric monoidal category with $*$ chosen as the tensor unit (which is then assumed to be absolutely simple) will do just as well.
Nonetheless, we will stick with $\Vectk$ as target.

\medskip

Next we add a categorical layer and look at $\Bord_{n,n-1,n-2}$. As before,
\be
	\Bord_{n,n-1} = \End_{\Bord_{n,n-1,n-2}}(\emptyset_{n-2}) \, .
\ee
This suggests that
we need to find a bicategory $\mathcal{C}$ with a preferred object~$\star$ such that the endomorphism category of~$\star$ is $\Vectk$.

Again there are many options, and we refer to \cite[App.\,A]{bdspv1509.06811} for a discussion of one class of target bicategories collected under the heading ``2-vector spaces'' (which is a bit like calling vector spaces ``1-numbers''). 
One possible target for a 2-functor out of $\Bord_{n,n-1,n-2}$ is $\mathrm{LinCat}^{\mathrm{ss}}_{\Bbbk}$ which has:
\begin{itemize}
\item \textsl{objects}: finitely semisimple $\Bbbk$-linear abelian categories. Each such category is equivalent to $\mathrm{rep}_\mathrm{fin}(A)$, the category of finite-dimensional representations of a finite-dimensional
semisimple $\Bbbk$-algebra~$A$. 
\item \textsl{1-morphisms}: $\Bbbk$-linear functors. 
For semisimple algebras, all such functors $\mathrm{rep}_\mathrm{fin}(A) \to \mathrm{rep}_\mathrm{fin}(B)$ are given by tensoring with a $B$-$A$-bimodule.
\item \textsl{2-morphisms}: natural transformations.
For semisimple algebras, these are given by bimodule homomorphisms.
\end{itemize}
The distinguished object~$\star$ would be the category $\vectk := \mathrm{rep}_\mathrm{fin}(\Bbbk)$ of finite-dimensional $\Bbbk$-vector spaces. Indeed, the category of $\Bbbk$-linear additive endofunctors of $\vectk $ is again equivalent to $\vectk $, as desired.

There is a product operation on $\mathrm{LinCat}^{\mathrm{ss}}_{\Bbbk}$ which takes two categories $\mathcal{C},\mathcal{D} \in \Obj(\mathrm{LinCat}^{\mathrm{ss}}_{\Bbbk})$ and produces a new category $\mathcal{C} \boxtimes \mathcal{D}$ which is again an object in $\mathrm{LinCat}^{\mathrm{ss}}_{\Bbbk}$. We will give three equivalent descriptions of this operation (see e.\,g.~\cite[Def.\,1.1.15]{BakalovKirillov} and \cite[Sect.\,1.1]{EGNO}).
\begin{itemize}
\item 
The objects of $\mathcal{C} \boxtimes \mathcal{D}$ are formal direct sums of pairs $(X,Y)$ with $X \in \mathcal{C}$ and $Y \in \mathcal{D}$. We will write $X \boxtimes Y$ instead of $(X,Y)$. Since $\mathcal{C}$, $\mathcal{D}$ are $\Bbbk$-linear, their Hom-spaces are $\Bbbk$-vector spaces. The space of morphisms $X \boxtimes Y \to X' \boxtimes Y'$ in $\mathcal{C} \boxtimes \mathcal{D}$ is defined to be $\Hom_{\mathcal{C}}(X,X') \otimes_\Bbbk \Hom_{\mathcal{D}}(Y,Y')$. 
For direct sums of objects, we take the corresponding direct sum of Hom-spaces.

By construction we have a canonical $\Bbbk$-bilinear functor $\boxtimes\colon \mathcal{C} \times \mathcal{D} \to \mathcal{C} \boxtimes \mathcal{D}$, which maps $(X,Y)$ to $X \boxtimes Y$ and $(f,g)$ to $f \otimes_\Bbbk g$.
\item 
If $\mathcal{C} = \mathrm{rep}_{\mathrm{fin}}(A)$ and $\mathcal{D} = \mathrm{rep}_{\mathrm{fin}}(B)$ for semisimple $\Bbbk$-algebras $A,B$, then $\mathcal{C} \boxtimes \mathcal{D} = \mathrm{rep}_{\mathrm{fin}}(A \otimes_\Bbbk B)$.

There is a canonical functor $\boxtimes\colon \mathcal{C} \times \mathcal{D} \to \mathcal{C} \boxtimes \mathcal{D}$ which sends a pair $(M,N)$ of an $A$-module $M$ and a $B$-module $N$ to $M \otimes_{\Bbbk} N$, and dito for module homomorphisms.

\item
The category $\mathcal{C} \boxtimes \mathcal{D}$ in $\mathrm{LinCat}^{\mathrm{ss}}_{\Bbbk}$ together with the $\Bbbk$-bilinear functor $\boxtimes\colon \mathcal{C} \times \mathcal{D} \to \mathcal{C} \boxtimes \mathcal{D}$ satisfies the following universal property: For every $\mathcal{E} \in \Obj(\mathrm{LinCat}^{\mathrm{ss}}_{\Bbbk})$ the functor
\be
	\mathrm{Fun}_{\text{$\Bbbk$-lin}}(\mathcal{C} \boxtimes \mathcal{D},\mathcal{E})
	\xrightarrow{(-) \circ \boxtimes}
	\mathrm{Fun}_{\text{$\Bbbk$-bilin}}(\mathcal{C} \times \mathcal{D},\mathcal{E})
\ee
is an equivalence of categories.
\end{itemize}
Of these equivalent characterisations of $\mathcal{C} \boxtimes \mathcal{D}$ the first one is limited to the present setting, the second generalises to 
finite 
non-semisimple categories, and the last one -- also known as the Deligne product \cite{DeligneCT} -- goes far beyond this, see e.\,g.\ \cite[Sect.\,3]{Ben-Zvi:2015}.

\medskip

Now we should explain how $\Bord_{n,n-1,n-2}$ and $\mathrm{LinCat}^{\mathrm{ss}}_{\Bbbk}$ are symmetric monoidal bicategories (and what such a bicategory is in the first place), what the appropriate notion of symmetric monoidal 2-functor $\Bord_{n,n-1,n-2} \to \mathrm{LinCat}^{\mathrm{ss}}_{\Bbbk}$ is, etc. 
But in keeping with merrily oversimplifying we will not 
(see, however, \cite{spthesis}). 
Instead we now return to three dimensions and will sketch why the statement
\begin{quote}
``An extended TQFT defined on $\Bord_{3,2,1}$ and taking values in $\mathrm{LinCat}^{\mathrm{ss}}_{\Bbbk}$ assigns a braided monoidal category to the circle $S^1$.''
\end{quote}
might be true.

\subsubsection*{Three-dimensional extended TQFTs and braided monoidal categories}

Let $\zz\colon \Bord_{3,2,1} \to \mathrm{LinCat}^{\mathrm{ss}}_{\Bbbk}$ be a symmetric monoidal 2-functor (we will not need to know what this is in detail). Here is how $\zz$  produces a braided monoidal category $\mathcal{C}$:
\begin{itemize}
\item
\textsl{underlying category:} The 2-functor maps objects to objects, hence $\mathcal{C} := \zz(S^1)$ will be an object in $\mathrm{LinCat}^{\mathrm{ss}}_{\Bbbk}$, that is, a 
finite semisimple $\Bbbk$-linear abelian category.
\item
\textsl{tensor product functor:} We are looking for a functor $\mathcal{C} \times \mathcal{C} \to \mathcal{C}$. 
This functor should be $\Bbbk$-bilinear.
By the third characterisation of $\mathcal{C} \boxtimes \mathcal{C}$ given above, this functor therefore factors uniquely as
\be
	\mathcal{C} \times \mathcal{C}
	\stackrel{ \boxtimes }{\lra}
	\mathcal{C} \boxtimes \mathcal{C}
	\stackrel{T}{\lra}
	\mathcal{C}
\ee
for some $T$, which is now $\Bbbk$-linear, i.\,e.\ a 1-morphism in $\mathrm{LinCat}^{\mathrm{ss}}_{\Bbbk}$.

The 2-functor $\zz$ will map 1-morphisms in $\Bord_{3,2,1}$, i.\,e.\ surfaces, to 1-morphisms in $\mathrm{LinCat}^{\mathrm{ss}}_{\Bbbk}$. So the natural place to look for $T$ is a surface with two incoming boundary circles and one outgoing boundary circle. The simplest such surface is a pair-of-pants
\be
\Phi = 
{
\begin{tikzpicture}[scale=0.1,color=red!65!black, baseline=-0.1cm]
%
\fill [orange!23] (0,0) circle (6);
%
\fill[color=white] (2.5,0) circle (1.5);
\fill[color=white] (-2.5,0) circle (1.5);
%
\draw[very thick] (0,0) circle (6);
%
\draw[very thick, color=red!30!orange] (2.5,0) circle (1.5);
%
\draw[very thick, color=red!30!orange] (-2.5,0) circle (1.5);
\end{tikzpicture} 
} 
\,
= 
{
\begin{tikzpicture}[thick,scale=1.0,color=black, baseline=0.3cm]
\coordinate (p1) at (-0.55,0);
\coordinate (p2) at (-0.2,0);
\coordinate (p3) at (0.2,0);
\coordinate (p4) at (0.55,0);
\coordinate (p5) at (0.175,0.8);
\coordinate (p6) at (-0.175,0.8);
%
\fill [orange!23] 
(p1) .. controls +(0,-0.15) and +(0,-0.15) ..  (p2)
-- (p2) .. controls +(0,0.35) and +(0,0.35) ..  (p3)
-- (p3) .. controls +(0,-0.15) and +(0,-0.15) ..  (p4)
-- (p4) .. controls +(0,0.5) and +(0,-0.5) ..  (p5)
-- (p5) .. controls +(0,0.15) and +(0,0.15) ..  (p6)
-- (p6) .. controls +(0,-0.5) and +(0,0.5) ..  (p1)
;
\fill [orange!38] 
(p1) .. controls +(0,-0.15) and +(0,-0.15) ..  (p2)
-- (p2) .. controls +(0,0.15) and +(0,0.15) ..  (p1)
;
\fill [orange!38] 
(p3) .. controls +(0,-0.15) and +(0,-0.15) ..  (p4)
-- (p4) .. controls +(0,0.15) and +(0,0.15) ..  (p3)
;
\draw (p2) .. controls +(0,0.35) and +(0,0.35) ..  (p3); 
\draw (p4) .. controls +(0,0.5) and +(0,-0.5) ..  (p5); 
\draw (p6) .. controls +(0,-0.5) and +(0,0.5) ..  (p1); 
\draw[very thick, color=red!30!orange] (p1) .. controls +(0,0.15) and +(0,0.15) ..  (p2); 
\draw[very thick, color=red!30!orange] (p1) .. controls +(0,-0.15) and +(0,-0.15) ..  (p2); 
\draw[very thick, color=red!30!orange] (p3) .. controls +(0,0.15) and +(0,0.15) ..  (p4); 
\draw[very thick, color=red!30!orange] (p3) .. controls +(0,-0.15) and +(0,-0.15) ..  (p4); 
\draw[very thick, red!65!black] (p5) .. controls +(0,0.15) and +(0,0.15) ..  (p6); 
\draw[very thick, red!65!black, opacity=0.2] (p5) .. controls +(0,-0.15) and +(0,-0.15) ..  (p6); 
\end{tikzpicture}
} 
\ee
of which we depict two representative bordisms. 
We obtain a functor $\zz(\Phi)\colon \zz(S^1 \sqcup S^1) \to \zz(S^1)$. 
Furthermore, the monoidality of $\zz$ (which we did not explain) provides us with an equivalence of categories $\zz(S^1 \sqcup S^1) \cong \zz(S^1) \boxtimes \zz(S^1)$. 
Altogether we obtain a functor 
\be
	T = \Big[ \,
	\mathcal{C} \boxtimes \mathcal{C}
	\stackrel{\sim}{\lra}
	\zz(S^1 \sqcup S^1)
	\xrightarrow{\zz(\Phi)}
	\zz(S^1) = \mathcal{C} \, \Big] \, .
\ee
Now one should find the associator and verify the pentagon. For this one needs to look at spheres with 3 and 4 incoming holes and one outgoing hole, and bordisms between them, but the details descend too far into the inner workings of symmetric monoidal bicategories and their 2-functors.
\item
\textsl{unit object:}
We need to identify an object $\one \in \mathcal{C}$ which serves as the unit under the tensor product functor $T$, that is, $T(\one \boxtimes -) \cong \id \cong T(- \boxtimes \one)$ as $\Bbbk$-linear functors (subject to coherence conditions). 
Looking at the pair-of-pants defining $T$, we note that gluing a disc~$D$ into one of the holes produces a cylinder, whose image under~$\zz$ is equivalent to the identity functor.
So we consider $\zz(D)\colon \zz(\emptyset_1) \to \zz(S^1)$. But $\zz(\emptyset_1) \cong \vectk $, as by monoidality~$\zz$ should map the monoidal unit to something equivalent to the monoidal unit, and a $\Bbbk$-linear functor $\vectk  \to \mathcal{C}$ is uniquely determined (up to natural isomorphism) by what it does to $\Bbbk$. 
We choose $\one \in \mathcal{C}$ to be the image of $\Bbbk$ under the functor $\zz(D)$. 
The unit isomorphisms and their coherence conditions again require  too many of the  details we skipped earlier.
\item
\textsl{braiding isomorphism:} 
We need to find, for each pair $X,Y \in \mathcal{C}$, an isomorphism 
$\beta_{X,Y}\colon T(X \boxtimes Y) \to T(Y \boxtimes X)$ natural in~$X$ and~$Y$. 
To view the collection $\{\beta_{X,Y}\}$ as a 2-morphism in $\mathrm{LinCat}^{\mathrm{ss}}_{\Bbbk}$ we write it as a natural isomorphism making the triangle
\be\label{eq:braiding-as-natiso}
\begin{tikzpicture}[
			     baseline=(current bounding box.base), 
			     >=stealth,
			     descr/.style={fill=white,inner sep=3.5pt}, 
			     normal line/.style={->}
			     ] 
\matrix (m) [matrix of math nodes, row sep=2.0em, column sep=1.0em, text height=1.1ex, text depth=0.1ex] {%
\mathcal{C} \boxtimes \mathcal{C} && \mathcal{C} \boxtimes \mathcal{C}
\\
& \vspace{-1cm}
{}^{\stackrel{\normalsize{\beta}}{\rotatebox[origin=c]{214}{$\implies$}}} &
\\
& \mathcal{C} &
\\
};
\path[font=\footnotesize] (m-1-1) edge[->] node[auto] {\text{swap}} (m-1-3);
\path[font=\footnotesize] (m-1-1) edge[->] node[below] {$T\;\;\;$} (m-3-2);
\path[font=\footnotesize] (m-1-3) edge[->] node[below] {$\;\;\;T$} (m-3-2);
\end{tikzpicture}
\ee
commute. The top arrow maps the ``pure tensor object'' $X \boxtimes Y$ to $Y \boxtimes X$, and similarly for morphisms. That this indeed defines an endofunctor of $\mathcal{C} \boxtimes \mathcal{C}$ follows most readily from the third characterisation of $\boxtimes$ given above. The functor ``swap'' is actually part of the symmetric structure on $\mathrm{LinCat}^{\mathrm{ss}}_{\Bbbk}$.

A natural isomorphism making \eqref{eq:braiding-as-natiso} commute is a 2-isomorphism in $\mathrm{LinCat}^{\mathrm{ss}}_{\Bbbk}$. The 2-morphisms in $\Bord_{3,2,1}$ are 3-dimensional bordisms (with corners). The symmetric structure on $\Bord_{3,2,1}$ acts on objects by reordering the components of a disjoint union. We arrive at the following candidate for the bordism giving the braiding: 
\be
\label{eq:sausagetwist}
B = 
%
\begin{tikzpicture}[thick,scale=2.0,color=blue!50!black, baseline=0.0cm, >=stealth, 
				style={x={(-0.9cm,-0.4cm)},y={(0.8cm,-0.4cm)},z={(0cm,0.9cm)}}]
%
\foreach \z in {0, 0.5, ..., 100}
{
	\draw[very thick, color=orange!23, opacity=0.9] (0.5, 0.5, \z/100) circle (0.5);
}
%
\draw[very thick, color=red!80!black] (0.5, 0.5, 0) circle (0.5);
%
\foreach \z in {0, 0.45, ..., 180}
{
	\draw[very thin, color=red!65!black] (0.5 - 0.3 * sin \z, 0.5 - 0.3 * cos \z  ,\z/180) circle (0.15);
}
\draw[very thick, color=red!65!black] (0.5 , 0.2 ,0) circle (0.15);
\foreach \z in {0, 0.45, ..., 180}
{
	\draw[very thin, color=red!30!orange] (0.5 + 0.3 * sin \z , 0.5 + 0.3 * cos \z ,\z/180) circle (0.15);
}
%
\draw[very thick, color=red!65!black] (0.5, 0.8 ,1) circle (0.15);
\draw[very thick, color=red!30!orange] (0.5 , 0.8, 0) circle (0.15);
\draw[very thick, color=red!30!orange] (0.5 , 0.2, 1) circle (0.15);
%
\draw[very thick, color=red!80!black] (0.5, 0.5, 1) circle (0.5);
\end{tikzpicture}
\ee
The corresponding natural transformation $\zz(B)$ between two functors $\zz(S^1 \sqcup S^1) \to \zz(S^1)$ provides the family of isomorphisms $\beta_{X,Y}$ we are after. 
These isomorphisms have to satisfy the hexagon conditions, the details of which require the coherence properties of symmetric monoidal 2-functors, and so we omit this argument.
However, we would like to stress that 
-- as suggested by the bordism in \eqref{eq:sausagetwist} -- 
in general the braiding will not be symmetric: $\beta_{X,Y} \neq \beta^{-1}_{Y,X}$.
\end{itemize}

The above sketch illustrates how an extended TQFT $\zz\colon \Bord_{n,n-1,n-2} \to \mathrm{LinCat}^{\mathrm{ss}}_{\Bbbk}$ produces a braided monoidal category. However, braided monoidal categories do not provide an algebraic description of extended TQFTs. Indeed, not every braided monoidal category in $\mathrm{LinCat}^{\mathrm{ss}}_{\Bbbk}$ arises as $\zz(S^1)$ for some extended TQFT, and -- even worse -- inequivalent TQFTs may produce equivalent braided monoidal categories.

With a heroic amount of work it is possible to describe $\Bord_{3,2,1}$ in terms of a finite number of generators and relations (but now as a symmetric monoidal bicategory, which is one categorical level more involved than Section \ref{sec:gen+rel_def}), and to prove:

\begin{theorem}[\cite{bdspv1509.06811}]\label{thm:class123}
Assume $\Bbbk$ is algebraically closed.
Extended TQFTs $\zz\colon \Bord_{3,2,1} \to \mathrm{LinCat}^{\mathrm{ss}}_{\Bbbk}$ are classified by anomaly-free modular tensor categories over $\Bbbk$.
\end{theorem}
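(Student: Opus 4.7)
The plan is to mimic the strategy already used in dimensions one and two: find a finite presentation of $\Bord_{3,2,1}$ by generators and relations, then translate what the data of a symmetric monoidal $2$-functor into $\mathrm{LinCat}^{\mathrm{ss}}_{\Bbbk}$ amounts to, and finally match the resulting algebraic structure to the axioms of an anomaly-free modular tensor category. The main new feature, compared to Sections~\ref{sec:one-dim-class} and~\ref{subsec:2dTQFTs}, is that one now has to work with symmetric monoidal bicategories, so the generators come in three layers (objects, 1-morphisms, 2-morphisms) and the relations are themselves 3-dimensional equalities between pasting diagrams.

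First I would establish the presentation of $\Bord_{3,2,1}$. The single generating object is $S^1$. The generating 1-morphisms are the four elementary surface bordisms of \eqref{eq:Bord2gen2}, exactly as in the 2-dimensional case. The generating 2-morphisms fall into two groups: those implementing the relations \eqref{eq:R1}--\eqref{eq:R4} of $\Bord_2$ as invertible 3-morphisms (so that the old 2-dimensional relations become coherences realised by explicit 3-manifolds such as the saddle move), together with essentially one further generator that is genuinely 3-dimensional, corresponding to attaching a $2$-handle, i.\,e.\ the solid torus viewed as $D^2\times S^1$. The relations in $G_3$ then arise from Cerf theory for 3-dimensional Morse functions: they encode handle slides, handle cancellations, and the Kirby moves, together with the coherences making $\Bord_{3,2,1}$ a symmetric monoidal bicategory. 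Carrying this out rigorously is the main geometric input, and is the content of the long technical backbone of \cite{bdspv1509.06811}.

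Once the presentation is in place, a symmetric monoidal $2$-functor $\zz\colon \Bord_{3,2,1}\to \mathrm{LinCat}^{\mathrm{ss}}_{\Bbbk}$ is determined, up to monoidal natural equivalence, by a choice of image for each generator subject to the relations holding. The sketch in the excerpt already extracts from this a category $\mathcal C=\zz(S^1)\in \mathrm{LinCat}^{\mathrm{ss}}_{\Bbbk}$ together with a tensor product $T$, unit object $\boldsymbol 1$, and a (non-symmetric) braiding $\beta$. I would now continue by: (a) producing associator and unitor $2$-isomorphisms from suitable 3-bordisms relating different pair-of-pants decompositions, and deriving the pentagon and triangle axioms from the corresponding relations in $G_3$; (b) using the fact that $S^1$ is self-dual in $\Bord_{3,2,1}$ (with evaluation and coevaluation supplied by the cup/cap together with canonical 3-dimensional coherence data) to obtain rigid duals on $\mathcal C$; (c) interpreting the half-twist of a cylinder $S^1\times[0,1]$ as a $2$-endomorphism of the identity 1-morphism, which yields a twist $\theta_X\colon X\to X$; the compatibility of this twist with duals and braiding, forced by the 3-dimensional coherences, promotes $\mathcal C$ to a ribbon category. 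Finite semisimplicity is built into the choice of target $\mathrm{LinCat}^{\mathrm{ss}}_{\Bbbk}$.

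The last and hardest step is to show that the relations coming from 3-dimensional Kirby calculus force modularity and the anomaly-free condition, and conversely that any anomaly-free modular tensor category solves them. Modularity should emerge by evaluating $\zz$ on the solid torus bordisms that build $S^2\times S^1$ in two inequivalent ways: the resulting identity among linear maps is equivalent to invertibility of the Hopf-link $S$-matrix, i.\,e.\ non-degeneracy of the braiding. The anomaly-free condition then arises from the requirement that $\zz$ be defined on \emph{oriented} (not $p_1$-framed) 3-manifolds, so that a Dehn twist on a surface must act by the prescribed power of $\theta$ without an extra scalar; in modular-tensor-category language this is the condition that the Gauss sum $p^+/\sqrt{|\mathcal C|}$ be $1$, equivalently that the central charge vanish modulo $8$. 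I expect this final matching to be the principal obstacle: verifying that every Kirby move really is accounted for (and no more is needed) is precisely what makes the theorem non-trivial, and both directions must be checked in the full bicategorical setting, with the converse (from an anomaly-free MTC back to a $2$-functor) essentially recovering the Reshetikhin--Turaev construction lifted to the extended bordism bicategory.
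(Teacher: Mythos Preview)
The paper does not contain a proof of this theorem: it is stated with attribution to \cite{bdspv1509.06811} and prefaced by the remark that ``with a heroic amount of work'' one can give a finite generators-and-relations presentation of $\Bord_{3,2,1}$ as a symmetric monoidal bicategory. No argument beyond the informal sketch of how a braided monoidal structure arises on $\zz(S^1)$ is offered in these notes; the actual proof is entirely deferred to the cited reference.

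Your outline is a reasonable high-level summary of the strategy pursued in \cite{bdspv1509.06811}, and it correctly identifies the shape of the argument (presentation of the bordism bicategory via Cerf theory, translation of generators to tensor/braiding/twist/duality data, and extraction of modularity and the anomaly condition from the 3-dimensional relations). But since there is no proof in the present paper to compare against, the appropriate observation is simply that the theorem is quoted, not proved, here. If anything, your sketch goes further than the paper does: the paper stops at exhibiting the braiding and explicitly disclaims verifying the hexagon, let alone rigidity, the ribbon twist, or modularity.
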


A modular tensor category is an object in $\mathrm{LinCat}^{\mathrm{ss}}_{\Bbbk}$ which is braided monoidal with simple tensor unit, and in addition has two-sided duals and a ribbon twist. Furthermore, the braiding has to satisfy a nondegeneracy condition. 
Anomaly-freeness is a condition on the ribbon twists and quantum dimensions. See e.\,g.~\cite[Sect.\,8.13--8.15]{EGNO}, \cite[Sect.\,3.1\&\,4.4]{BakalovKirillov} or indeed \cite{bdspv1509.06811} for more details.

\subsection[Dimensional reduction from 3 to 2 along $S^1$]{Dimensional reduction from 3 to 2 along $\boldsymbol{S^1}$}
\label{subsec:dimred3to2}

In Proposition \ref{prop:dimred} we described the general procedure of dimensional reduction, 
taking a TQFT of some dimension and producing lower-dimensional TQFTs. 
In this section we would like to take an extended three-dimensional TQFT as described in the previous section and see which two-dimensional TQFT it produces when dimensionally reducing along a circle. 

\medskip

Before we sketch the derivation, let us guess (correctly) what the answer is.
Assume the field $\Bbbk$ to be algebraically closed.
By Theorem \ref{thm:2dTQFT}, to give a two-dimensional TQFT with values in $\Vectk$ is the same as to give a commutative Frobenius algebra over $\Bbbk$. 
By Theorem \ref{thm:class123}, to give an extended three-dimensional TQFT is the same as to give an anomaly-free modular tensor category over $\Bbbk$.
In particular, we need to produce a commutative algebra out of a braided monoidal category. This can be done by taking the Grothendieck ring.

The additive \textsl{Grothendieck group} of a category $\mathcal{C} \in \mathrm{LinCat}^{\mathrm{ss}}_{\Bbbk}$ can be defined as 
\be
\mathrm{Gr}(\mathcal{C}) 
:= \bigoplus_{U \in \mathrm{Irr}(\mathcal{C})} 
\mathbb{Z} [U] \, .
\ee
Here, $\mathrm{Irr}(\mathcal{C})$ stands for a choice of simple objects in $\mathcal{C}$ which contains precisely one representative for each isomorphism class of simple objects. For $U \in \mathrm{Irr}(\mathcal{C})$, $[U]$ denotes the isomorphism class it represents. So $\mathrm{Gr}(\mathcal{C})$ is a free abelian group of 
rank $|\mathrm{Irr}(\mathcal{C})|$. 

If $\mathcal{C}$ is in addition monoidal 
with tensor product induced by $T\colon \mathcal{C} \boxtimes \mathcal{C} \to \mathcal{C}$, then 
$\mathrm{Gr}(\mathcal{C})$ 
is a unital associative ring with product given on generators by
\be
	[U] \cdot [V] := [T(U \boxtimes V)] \, .
\ee
If (but not only if) $\mathcal{C}$ is braided, this product is commutative. 
Hence, to a braided tensor category $\mathcal C \in \mathrm{LinCat}^{\mathrm{ss}}_{\Bbbk}$ we can associate the commutative $\Bbbk$-algebra $\mathrm{Gr}(\mathcal{C}) \otimes_\mathbb{Z} \Bbbk$.

It remains to find a nondegenerate and invariant pairing. 
Here we use that a modular tensor category -- in addition to being braided monoidal -- also has duals~$U^*$ for every object~$U$. 
Now for $U,V \in \mathrm{Irr}(\mathcal{C})$, 
the tensor unit occurs in $U \otimes V$ with multiplicity one if $V \cong U^*$ and with multiplicity zero else 
(as $\Hom_{\mathcal{C}}(U \otimes V,\one) \cong \Hom_{\mathcal{C}}(U,V^*)$). 
Thus the pairing 
\be
\label{eq:pairingguess}
\big\langle [U],[V] \big\rangle 
:= 
\dim_\Bbbk \Hom_{\mathcal{C}}(U \otimes V,\one)
\ee
turns $\mathrm{Gr}(\mathcal{C}) \otimes_\mathbb{Z} \Bbbk$ into a commutative Frobenius algebra.

\medskip

We now verify that the TQFT calculation indeed produces $\mathrm{Gr}(\mathcal{C}) \otimes_\mathbb{Z} \Bbbk$ with the above pairing. 
In the course of this verification, we will have to pull two facts out of our hats. 

Let $\zz$ be 
a 3-dimensional 
extended TQFT.
Denote by $S^2_{0 \to 2}$ a 2-sphere with two outgoing holes, and by $S^2_{m \to 0}$ a 2-sphere with $m$~ingoing holes. To these, the extended TQFT $\zz$ assigns functors. Here is the first out-of-the-hat fact: on objects these functors act as
\begin{align}
\zz\big(S^2_{0 \to 2}\big)\colon \vectk  &\lra \mathcal{C} \boxtimes \mathcal{C} \, , 
\nonumber 
\\
\Bbbk &\lmt \bigoplus_{U \in \mathrm{Irr}(\mathcal{C})} U^* \boxtimes U \, , 
\nonumber 
\\
\zz\big(S^2_{n \to 0}\big) \colon \mathcal{C}^{\boxtimes n} &\lra \vectk  \, , 
\nonumber 
\\
X_1 \boxtimes \cdots \boxtimes X_n &\lmt \Hom_{\mathcal{C}}(X_1 \otimes \cdots \otimes X_n,\one) \, . 
\end{align}
With this, we can compute the state space on $S^1$ of the dimensionally reduced 2-dimensional TQFT $\zz^\mathrm{red}$:
\begin{align}
	A := \zz^\mathrm{red}\big(S^1\big)
	&=
	\zz\big(S^1 \times S^1\big)
	\nonumber \\
	&=
	\zz\big(S^2_{2 \to 0}) \circ \zz(S^2_{0 \to 2}\big)
	\nonumber \\
	&=
	\bigoplus_{U \in \mathrm{Irr}(\mathcal{C})} \Hom_{\mathcal{C}}(U \otimes U^* , \one)
	\nonumber \\
	&\cong
	\bigoplus_{U \in \mathrm{Irr}(\mathcal{C})} \Hom_{\mathcal{C}}(U,U) \, .
\end{align}
Here we used implicitly the identification of $\Bbbk$-linear functors $\vectk\to\vectk$ with vector spaces (by evaluating on $\Bbbk$).
Since $\Hom_{\mathcal{C}}(U,U) \cong \Bbbk$ this is indeed canonically isomorphic to $\mathrm{Gr}(\mathcal{C}) \otimes_\mathbb{Z} \Bbbk$ as a $\Bbbk$-vector space (even as one with preferred basis).

The second hat-fact is the value of $\zz$ on $S^2_{m \to 0} \times S^1$, which by definition is a linear map from $A^{\otimes_\Bbbk m}$ to $\Bbbk$. A basis of $A^{\otimes_\Bbbk m}$ is given by the elements $1_{U_1} \otimes_{\Bbbk} \cdots \otimes_{\Bbbk} 1_{U_m}$ for $U_1,\dots,U_m \in \mathrm{Irr}(\mathcal{C})$. 
On this basis, we have
\begin{align}
\zz^{\mathrm{red}}\big(S^2_{m \to 0}\big) (1_{U_1} \otimes_{\Bbbk} \cdots \otimes_{\Bbbk} 1_{U_m})
&= 
\zz\big(S^2_{m \to 0} \times S^1\big)(1_{U_1} \otimes_{\Bbbk} \cdots \otimes_{\Bbbk} 1_{U_m})
\nonumber \\
&=
\dim_{\Bbbk} \Hom_{\mathcal{C}}(U_1 \otimes \cdots \otimes U_m,\one) \, .
\label{eq:ZredS2nS1}
\end{align}

For $m=2$ we obtain a pairing on $A$ which is precisely the pairing $\langle-,-\rangle$ from our earlier guess~\eqref{eq:pairingguess}. 
Since this pairing is nondegenerate, the product on $A$ is determined uniquely by the corresponding map $A^{\otimes_\Bbbk 3} \to \Bbbk$. 
For $\mathrm{Gr}(\mathcal{C}) \otimes_\mathbb{Z} \Bbbk$ this map is given by
\be
	[U] \otimes_\Bbbk [V] \otimes_\Bbbk [W]
	\lmt \big\langle[U]\cdot[V],[W]\big\rangle 
	=
	\dim_\Bbbk \Hom_{\mathcal{C}}(U\otimes V\otimes W,\one)\, ,
\ee
which agrees with \eqref{eq:ZredS2nS1}.

\bigskip

This ends our expository notes on TQFT.

\appendix

\section{Appendix}

\subsection{Alternative description of symmetric monoidal functors}
\label{app:alt-sym-mon-fun}

In this appendix we assume familiarity with symmetric monoidal functors and dualities and how to manipulate them. 
Let~$\mathcal{C}$ and~$\mathcal{D}$ be symmetric monoidal categories and let $\mathcal{C}$ have left duals (and hence also right duals). 
Let $\gamma$ denote the following collection of data:
\begin{itemize}
\item for all $X \in \mathcal{C}$ an object $\gamma(X) \in \mathcal{D}$.
\item for all morphisms $f\colon \one_{\mathcal{C}} \to X$ in $\mathcal{C}$ a morphism $\gamma(f)\colon \gamma(\one_{\mathcal{C}}) \to \gamma(X)$ in $\mathcal{D}$,
\item an isomorphism $\gamma^0\colon \one_{\mathcal{D}} \to \gamma(\one_{\mathcal{C}})$ and a family of isomorphisms $\gamma^2_{X,Y}\colon \gamma(X) \otimes_{\mathcal{D}} \gamma(Y) \to \gamma(X \otimes_{\mathcal{C}} Y)$, where $X,Y \in \mathcal{C}$.
\end{itemize}

In the next lemma we will lighten notation by dropping tensor products between objects in longer expressions, and by writing $\xrightarrow{\sim}$ when it is clear how to obtain the corresponding isomorphism out of the coherence isomorphisms of $\mathcal{C}$, $\mathcal{D}$ and $\gamma$.

\begin{lemma}\label{lem:sym-mon-fun-via-Y}
The following are equivalent.
\begin{enumerate}
\item $\gamma$ extends to a symmetric monoidal functor $\mathcal{C}\to\mathcal{D}$.
\item $\gamma$ satisfies:
\begin{enumerate}
\item \textsl{(Existence of dual pairing)} for each $X \in \mathcal{C}$ there is a morphism $d_X\colon \gamma(X^*) \otimes \gamma(X) \to \one_{\mathcal D}$ which is dual to $\gamma(\coev_X)$ in the sense that
\begin{align*}
1_{\gamma(X)} ~=~ 
\Big[&
\gamma(X) \stackrel{\sim}{\lra} \gamma(\one_{\mathcal C})\gamma(X)
\xrightarrow{\gamma(\coev_X) \otimes \id} \gamma(X X^*) \gamma(X)
\\ &
\stackrel{\sim}{\lra} \gamma(X)(\gamma(X^*)\gamma(X))
\xrightarrow{\id \otimes d_X} \gamma(X) \one_{\mathcal D} \stackrel{\sim}{\lra} \gamma(X)
\Big] \, ,
\\
1_{\gamma(X^*)} ~=~ 
\Big[&
\gamma(X^*) \stackrel{\sim}{\lra} \gamma(X^*)\gamma(\one_{\mathcal C})
\xrightarrow{\id \otimes \gamma(\coev_X)}  \gamma(X^*)\gamma(X X^*)
\\ &
\stackrel{\sim}{\lra} ( \gamma(X^*)\gamma(X) ) \gamma(X^*)
\xrightarrow{d_X \otimes \id} \one_{\mathcal D}  \gamma(X^*) \stackrel{\sim}{\lra} \gamma(X^*)
\Big] \, .
\end{align*}
\item \textsl{(Compatibility with gluing)} for all $X,M \in \mathcal{C}$ and all $f\colon \one \to M \otimes (X^* \otimes X)$,
\begin{align*}
&
\Big[\,
\gamma(\one_{\mathcal C}) \xrightarrow{\gamma(f)} \gamma\big(M(X^*X)\big)
\stackrel{\sim}{\lra}
\gamma(M)(\gamma(X^*)\gamma(X))
\xrightarrow{\id \otimes d_X} \gamma(M)\one_{\mathcal D}
\stackrel{\sim}{\lra}\gamma(M) \,\Big]
\\
&=
\gamma\Big(\Big[\,\one_{\mathcal C} \xrightarrow{f} M(X^*X)
\xrightarrow{\id \otimes \ev_X} M \one_{\mathcal D} \stackrel{\sim}{\lra} M \,\Big]\Big) \, .
\end{align*}
\item \textsl{(Compatibility with tensor products)} for all $f: \one_{\mathcal C} \to X$ and $g\colon \one_{\mathcal C} \to Y$ in $\mathcal{C}$,
\begin{align*}
&\gamma\Big(\Big[ \one_{\mathcal C} \stackrel{\sim}{\lra} \one_{\mathcal C}\one_{\mathcal C} \xrightarrow{f \otimes g} XY \Big] \Big)
\\&= 
\Big[
\gamma(\one_{\mathcal C}) \stackrel{\sim}{\lra}\gamma(\one_{\mathcal C})\gamma(\one_{\mathcal C})
\xrightarrow{\gamma(f) \otimes \gamma(g)} \gamma(X)\gamma(Y)
\stackrel{\sim}{\lra} \gamma(XY) \Big] 
\, . 
\end{align*}
\item 
\textsl{(Compatibility with symmetric monoidal structure)}
Let $X_1,\dots,X_n \in \mathcal{C}$ and let $W$ be the tensor product of $X_1,\dots,X_n$, in this order, with any choice of bracketing. Let $\pi \in S_n$ be a permutation and write $W_\pi$ for the tensor product of $X_{\pi1},\dots,X_{\pi n}$, again in this order and with any choice of bracketing (possibly different from that of $W$). There is a unique isomorphism $\hat\pi_{\mathcal{C}}\colon W \to W_\pi$ which is built from the symmetric braiding and the associator of $\mathcal{C}$ (there are many ways to write this isomorphism, but they are all equal by coherence). 

Finally, let $W^\gamma$ be the tensor product of $\gamma(X_1),\dots,\gamma(X_n)$ in this order with the same bracketing as $W$ and dito for $W_\pi^\gamma$ and $\gamma(X_{\pi1}),\dots,\gamma(X_{\pi n})$. Write $\hat\pi_{\mathcal{D}}\colon W^\gamma \to W_\pi^\gamma$ for the corresponding unique isomorphism.

Then, for all $f\colon \one_{\mathcal C} \to W$,
\begin{align*}
&\gamma\Big(\Big[ \one_{\mathcal C} \stackrel{f}{\lra} W \stackrel{\hat\pi_{\mathcal{C}}}{\lra} W_\pi \Big] \Big)
\\&= 
\Big[
\gamma(\one_{\mathcal C})
\xrightarrow{\gamma(f)} \gamma(W) 
\stackrel{\sim}{\lra} W^\gamma 
\stackrel{\hat\pi_{\mathcal{D}}}{\lra} W_\pi^\gamma  
\stackrel{\sim}{\lra}
\gamma(W_\pi) \Big] \, .
\end{align*} 
\item
\textsl{(Compatibility with units)}
The identity
$$
	\gamma(1_{\one_{\mathcal C}}) = 1_{\gamma(\one_{\mathcal C})}
$$
holds, and the squares
$$
	\xymatrix{ 
	\one\gamma(\one_{\mathcal D}) 
		\ar[r]^{\lambda^{\mathcal{D}}_{\gamma(\one)}} 
		\ar[d]_{\gamma^0 \otimes \id}
	&
	\gamma(\one_{\mathcal C})
		\ar[d]^{\gamma(\lambda^{\mathcal{C}}_{\one})}
	\\
	\gamma(\one_{\mathcal C})\gamma(\one_{\mathcal C})
	\ar[r]_{\gamma^2_{\one,\one}}
	&
	\gamma(\one_{\mathcal C}\one_{\mathcal C})
	}
	\quad , \quad
	\xymatrix{ 
	\gamma(\one_{\mathcal C}) \one_{\mathcal D}
		\ar[r]^{\rho^{\mathcal{D}}_{\gamma(\one)}} 
		\ar[d]_{\id \otimes \gamma^0}
	&
	\gamma(\one_{\mathcal C})
		\ar[d]^{\gamma(\rho^{\mathcal{C}}_{\one})}
	\\
	\gamma(\one_{\mathcal C})\gamma(\one_{\mathcal C})
	\ar[r]_{\gamma^2_{\one,\one}}
	&
	\gamma(\one_{\mathcal C}\one_{\mathcal C})
	}
$$
commute. 
\end{enumerate}
\end{enumerate}
If $\gamma$ satisfies the conditions in (ii), the extension to a symmetric monoidal functor in (i) is unique.
\end{lemma}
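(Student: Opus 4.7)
The direction (i)$\,\Rightarrow\,$(ii) is obtained by unpacking the axioms of a symmetric monoidal functor. Given a symmetric monoidal functor extending $\gamma$, define
$$
d_X ~:=~ \Big[\, \gamma(X^*)\gamma(X) \xrightarrow{\gamma^2_{X^*,X}} \gamma(X^* X) \xrightarrow{\gamma(\ev_X)} \gamma(\one_{\mathcal{C}}) \xrightarrow{(\gamma^0)^{-1}} \one_{\mathcal{D}} \,\Big] .
$$
Condition (a) is then the image under $\gamma$ of the Zorro moves in $\mathcal{C}$, (b) is the image of the triangle expressing that evaluation can be contracted into $f$, and (c)--(e) follow immediately from monoidality, symmetry, and unitality of the extending functor.

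For the substantive direction (ii)$\,\Rightarrow\,$(i), the plan is to build the extension by ``bending all inputs up to the unit''. For any morphism $f\colon X \to Y$ in $\mathcal{C}$, define $\widetilde f\colon \one_{\mathcal{C}} \to Y X^*$ as the mate of $f$ under the duality (i.e.~the composition $\one \xrightarrow{\coev_X} X X^* \xrightarrow{f \otimes \id} Y X^*$, read with the appropriate coherence isomorphisms), and then set
$$
\zz(f) ~:=~ \Big[\, \gamma(X) \stackrel{\sim}{\lra} \gamma(\one)\gamma(X) \xrightarrow{\gamma(\widetilde f) \otimes \id} \gamma(Y X^*)\gamma(X) \stackrel{\sim}{\lra} \gamma(Y)\big(\gamma(X^*)\gamma(X)\big) \xrightarrow{\id \otimes d_X} \gamma(Y)\one \stackrel{\sim}{\lra} \gamma(Y) \,\Big] .
$$
On objects, set $\zz(X) := \gamma(X)$, and take the coherence data of $\zz$ from $\gamma^0,\gamma^2$. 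The first step is to verify that $\zz(f)=\gamma(f)$ whenever $X=\one_{\mathcal{C}}$, which reduces (using $\coev_\one = \mathrm{can}$ and a coherence argument) to the compatibility of $\gamma$ with units in condition (e). The next step is to check that $\zz(1_X) = 1_{\gamma(X)}$, which is precisely condition (a).

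The main obstacle is functoriality, i.e.~$\zz(g \circ f) = \zz(g) \circ \zz(f)$ for composable $f\colon X \to Y$, $g\colon Y \to Z$. Here the plan is to rewrite $\widetilde{g \circ f}\colon \one \to Z X^*$ as a composition that first produces an element of $Z Y^* Y X^*$ by using $\widetilde g$ and $\widetilde f$ side-by-side, then contracts the middle $Y^* Y$ using $\ev_Y$; applying $\gamma$ to this rewrite and then using condition (b) to trade the internal $\gamma(\ev_Y)$ for $d_Y$ yields exactly the composition $\zz(g)\circ\zz(f)$. Monoidality of $\zz$ on morphisms, i.e.~$\zz(f \otimes g) = \zz(f) \otimes \zz(g)$ (suitably conjugated by $\gamma^2$), is reduced similarly: rewrite $\widetilde{f \otimes g}$ as an appropriate tensor product of $\widetilde f$ and $\widetilde g$ (using a swap of middle factors provided by the symmetric braiding of $\mathcal{C}$), then apply condition (c) to move $\gamma$ across the tensor product and condition (d) to handle the braiding. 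Symmetry and the hexagon axioms for $\zz$ are then immediate consequences of (d), and the unit coherence triangles are (e).

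Uniqueness follows because any symmetric monoidal extension $\zz'$ must satisfy $\zz'(f) = \zz'(\id_Y \otimes \ev_X) \circ \zz'(\widetilde f \otimes \id_X)$, and by (b) applied to $\zz'$ the right-hand side is forced to equal the formula defining $\zz(f)$ above; likewise $\zz'$ agrees with $\gamma$ on objects and on morphisms out of $\one_{\mathcal{C}}$ by hypothesis. The hard part throughout is bookkeeping of coherence isomorphisms, and the cleanest way to control this is via the graphical calculus for symmetric monoidal categories with duals, which makes the ``bend-up, contract, bend-down'' manipulations visibly equivalent to the identities demanded by (a)--(e).
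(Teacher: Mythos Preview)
Your proposal is correct and follows essentially the same approach as the paper: define $d_X$ via $\gamma(\ev_X)$ for (i)$\Rightarrow$(ii), and for (ii)$\Rightarrow$(i) extend $\gamma$ to arbitrary $f\colon X\to Y$ by bending to $\tilde f\colon\one\to YX^*$ and then contracting with $d_X$, checking functoriality via (b), monoidality via (c) and (d), and coherence via (d) and (e). One small caveat: the paper notes that verifying your $\zz(f)$ agrees with the original $\gamma(f)$ for $X=\one_{\mathcal C}$ in fact already requires all of (a)--(e), not just (e) as you suggest; and in the functoriality and monoidality checks the paper singles out the need to use (d) to reshuffle factors before (b) applies, and the uniqueness of the dual pairing (from (a)) to identify $d_{U\otimes X}$ with the composite of $d_U$ and $d_X$---points your sketch glosses over but which are where the actual work hides.
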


\begin{proof}[Sketch of proof.]
The detailed proof is surprisingly tedious, and more an exercise in endurance than in ingenuity. We will only indicate the rough steps.

\medskip

\noindent
{}From 
(i) to (ii) there is really nothing to do. The value of $d_X$ is
$$
d_X = \Big[\,
\gamma(X^*) \gamma(X) \xrightarrow{\gamma^2_{X^*,X}} \gamma(X^*X) 
\xrightarrow{\gamma(\ev_X)} \gamma(\one_{\mathcal C}) \xrightarrow{(\gamma^0)^{-1}} \one_{\mathcal D} \,\Big] \, .
$$

\medskip

\noindent
{}From (ii) to (i): The first step is to extend the definition of $\gamma$ from morphisms $\one_{\mathcal C} \to Y$ to morphisms $X \to Y$ for all $X \in \mathcal{C}$. This is done by the following ansatz. Let $f\colon X \to Y$ be  a morphism in $\mathcal{C}$. Abbreviate
$$
	\tilde f = \Big[\, \one_{\mathcal C} \xrightarrow{\coev_X} XX^* \xrightarrow{f \otimes \id} Y X^* \,\Big] \, .
$$
We set
\begin{align*}
	\gamma(f) := 
	\Big[\, &\gamma(X) \stackrel{\sim}{\lra} \one_{\mathcal D} \gamma(X) \xrightarrow{\gamma^0 \otimes \id} \gamma(\one_{\mathcal C})\gamma(X)
	\xrightarrow{\gamma(\tilde f) \otimes \id}
	\gamma(Y X^*) \gamma(X)
	\\&
	\stackrel{\sim}{\lra}
	\gamma(Y) (\gamma(X^*) \gamma(X))
	\xrightarrow{\id \otimes d_X} \gamma(Y) \one_{\mathcal D} 
	\stackrel{\sim}{\lra} \gamma(Y) \, \Big] \, .
\end{align*}
One now has to check that this notation actually makes sense, i.\,e.~that it produces the original $\gamma$ when evaluated on $f\colon \one_{\mathcal C} \to Y$. In verifying this, one already needs all the properties a)--e) of $\gamma$.

Now that $\gamma$ is defined on all objects and morphisms of $\mathcal{C}$, one can set about verifying that the data $(\gamma, \gamma^0, \gamma^2)$ is a symmetric monoidal functor $\mathcal{C} \to \mathcal{D}$.
\begin{itemize}
\item
$\gamma(1_X) = 1_{\gamma(X)}$: Since $\tilde 1_X = \coev_X$, this boils down to the two zig-zag identities in a). 
\item
$\gamma(f) \circ \gamma(g) = \gamma(f \circ g)$: This is already a more lengthy calculation. At some point one wants to use b) to move a $d_X$ into $\gamma(\dots)$, but the $d_X$ does not quite sit in the right place and one needs to use d) to shuffle the arguments around.
\item \textsl{$\gamma^2_{X,Y}$ is natural in $X$ and $Y$:} One needs to verify that
$$
\xymatrix{
\gamma(U) \gamma(X) 
\ar[rr]^{\gamma(f) \otimes \gamma(g)}
\ar[d]_{\gamma^2_{U,X}}
&& \gamma(V) \gamma(Y)
\ar[d]^{\gamma^2_{V,Y}}
\\
\gamma(U X) 
\ar[rr]_{\gamma(f \otimes g)}
&&\gamma(V Y) 
}
$$
commutes for all $f\colon U \to V$ and $g\colon X \to Y$. In doing so, one is quickly lead to using c), but is then left with $d_U$ and $d_X$ instead of $d_{U \otimes X}$. At this point the uniqueness of the dual pairing, which follows from the zig-zag identities in a), can be used. One verifies that
\begin{align*}
	&\gamma((UX)^*) \gamma(UX)
	\stackrel{\sim}{\lra}\gamma(X^*U^*)\gamma(UX)
	\stackrel{\sim}{\lra}
	\gamma(X^*)((\gamma(U^*)\gamma(U))\gamma(X))
\\
	& \xrightarrow{\id \otimes d_U \otimes \id}
	\gamma(X^*)(\one_{\mathcal D} \gamma(X))
	\stackrel{\sim}{\lra}
	\gamma(X^*)\gamma(X)
	\xrightarrow{d_X} 
	\one_{\mathcal D}
\end{align*}
equally satisfies the zig-zag-identities for $\gamma(\coev_{U \otimes X})$.
\item \textsl{$\gamma^2$ is compatible with the associators and symmetric braiding:} From d), the definition of $\gamma$ and the zig-zag identities in a) one quickly finds, in the notation of d),
$$
	\gamma\Big( \Big[ W \xrightarrow{\hat\pi_{\mathcal{C}}} W_\pi \Big] \Big)
= 
\Big[
\gamma(W) 
\stackrel{\sim}{\lra} W^\gamma 
\stackrel{\hat\pi_{\mathcal{D}}}{\lra} W_\pi^\gamma  
\stackrel{\sim}{\lra}
\gamma(W_\pi) \Big] \, .
$$
This implies compatibility with associators and braidings.
\item \textsl{$\gamma^0$, $\gamma^2$ are compatible with units:} immediate from e).
\end{itemize}
\end{proof}

\subsection{Monoidal natural transformations and duals}\label{app:mon-nat-x-groupoid}

Here we provide the abstract reason why categories of monoidal functors form groupoids in the presence of duals. The same disclaimer as in the beginning of Appendix \ref{app:alt-sym-mon-fun} applies: familiarity with monoidal functors and duality morphisms is assumed.

\begin{lemma}\label{lem:sym-mon-inv}
Let $\mathcal{C}$, $\mathcal{D}$ be monoidal categories, $F,G\colon \mathcal{C} \to \mathcal{D}$ monoidal functors and $\phi\colon F \to G$ a natural monoidal transformation. If $\mathcal{C}$ has left duals (or right duals), $\phi$ is invertible.
\end{lemma}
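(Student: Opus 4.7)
The approach is to exploit the standard fact that monoidal functors automatically preserve duals, so the component $\phi_{X^*}$ at a dual object carries enough information to manufacture an inverse of $\phi_X$. Fix $X \in \mathcal{C}$ with left dual $X^*$, evaluation $\ev_X\colon X^* \otimes X \to \one$ and coevaluation $\coev_X\colon \one \to X \otimes X^*$.

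The first step is to record that $F(X^*)$ is a left dual of $F(X)$ in $\mathcal{D}$, with evaluation and coevaluation built from $F(\ev_X)$, $F(\coev_X)$ and the structure morphisms $F^0, F^2$ of the monoidal functor $F$; write these as $\ev_X^F, \coev_X^F$, and similarly $\ev_X^G, \coev_X^G$ for $G$. Then I would define a candidate inverse $\psi_X\colon G(X) \to F(X)$ as the ``mate'' of $\phi_{X^*}$:
\begin{align*}
\psi_X = \Big[\, G(X) &\xrightarrow{\coev_X^F \otimes \id} F(X) \otimes F(X^*) \otimes G(X) \\
&\xrightarrow{\id \otimes \phi_{X^*} \otimes \id} F(X) \otimes G(X^*) \otimes G(X) \xrightarrow{\id \otimes \ev_X^G} F(X) \,\Big]\,.
\end{align*}

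The verification that $\psi_X \circ \phi_X = \id_{F(X)}$ and $\phi_X \circ \psi_X = \id_{G(X)}$ proceeds in both cases by the same pattern: use the monoidality condition on $\phi$ (which identifies $\phi_{X^*} \otimes \phi_X$ with $\phi_{X^* \otimes X}$ via $F^2, G^2$) to replace a tensor product of components by a single component at $X^* \otimes X$; then use naturality of $\phi$ on $\ev_X$ (respectively $\coev_X$) to move $\phi$ across, so that one ends up with $\ev_X^F$ paired against $\coev_X^F$ (or the $G$-versions); finally apply a zig-zag identity to collapse to the identity. Naturality of $\psi$ in $X$, and monoidality of $\psi$, then follow formally from the corresponding properties of $\phi$ together with the naturality of the mate construction. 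The right-dual case is entirely dual, with the roles of $\ev$ and $\coev$ interchanged.

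The main obstacle is not conceptual but purely bookkeeping: one must thread through the coherence isomorphisms, $F^0, F^2, G^0, G^2$, and the associators/unitors of $\mathcal{D}$, being careful that the monoidality square for $\phi$ at $(X^*, X)$ is invoked precisely where it converts $\phi_{X^*} \otimes \phi_X$ into $\phi_{X^* \otimes X}$. Once this accounting is done, the argument is mechanical, and it also explains why the morphisms in $\mathcal{DP}_\Bbbk$ of Section~\ref{sec:one-dim-class} and in $\textrm{comFrob}_\Bbbk$ of Section~\ref{subsec:2dTQFTs} are automatically invertible, as $\bullet_+$, $\bullet_-$ and $S^1$ are dualisable objects in their respective source categories.
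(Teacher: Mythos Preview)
Your proposal is correct and follows essentially the same approach as the paper: both transport the duality to $\mathcal{D}$ via the monoidal functors, then define the inverse of $\phi_X$ as the mate of $\phi_{X^*}$ using $\coev^F$ and $\ev^G$, with the verification reducing to monoidality and naturality of $\phi$ together with the zig-zag identities. The paper packages the monoidality/naturality step as the identity $(\phi_X \otimes \phi_{X^*}) \circ \coev^F_{F(X)} = \coev^G_{G(X)}$, which is exactly what your argument unwinds.
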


\begin{proof}[Sketch of proof.]
The proof amounts to plugging together three observations.
\begin{enumerate}
\item Everything in the image of $F$ also has a left dual. For example, the  coevaluation is given by
$$
\coev^F_{F(X)} := \Big[ 
\one
\xrightarrow{F^0} F(\one)
\xrightarrow{F(\coev_X)} F(XX^*)
\xrightarrow{(F^2_{X,X^*})^{-1}}
F(X)F(X^*)
\Big] \, .
$$
The same of course holds for $G$.
\item $\phi$ relates the duality maps arising from $F$ and $G$. For example, the coevaluations satisfy
$$
(\phi_X \otimes \phi_{X^*}) \circ \coev^F_{F(X)} ~=~ \coev^G_{G(X)} \, .
$$
\item The natural transformation $\tilde\phi\colon G \to F$ given by
\begin{align*}
\tilde\phi_X :=
\Big[
& G(X)
\stackrel{\sim}{\lra} \one G(X) 
\xrightarrow{\coev^F_{F(X)} \otimes \id} (F(X)F(X^*))G(X)
\nonumber \\ & 
\stackrel{\sim}{\lra} F(X)(F(X^*)G(X))
\xrightarrow{\id \otimes \phi_{X^*} \otimes \id} F(X)(G(X^*)G(X))
\nonumber \\ & 
\xrightarrow{\id \otimes \ev^G_{G(X)}} F(X) \one
\stackrel{\sim}{\lra} F(X) \Big]
\end{align*}
is the two-sided inverse to $\phi$.
\end{enumerate}
\end{proof}

\subsection{Alternative description of monoidal natural isomorphisms}\label{app:mon-nat-iso-alt}

After describing symmetric monoidal functors via  a different-from-usual set of data and conditions in Lemma \ref{lem:sym-mon-fun-via-Y}, we now turn to describing monoidal natural isomorphisms (and only those) in that language. 

\medskip

Let $\mathcal{C}$ and $\mathcal{D}$ be symmetric monoidal categories, let $\mathcal{C}$ have left duals, and let $\gamma,\delta$ be as in Appendix \ref{app:alt-sym-mon-fun} satisfying the conditions in Lemma \ref{lem:sym-mon-fun-via-Y}\,(ii). Let
$$
	\phi_X\colon \gamma(X) \lra \delta(X)
	\, , \quad X \in \mathcal{C} \, ,
$$
be a family of isomorphisms in $\mathcal{D}$. 

\begin{lemma}\label{lem:sym-mon-iso-via-Y}
The following are equivalent.
\begin{enumerate}
\item $(\phi_X)_{X \in \mathcal{C}}$ is a monoidal natural isomorphism $\gamma \to \delta$ between the symmetric monoidal functors $\mathcal{C} \to \mathcal{D}$ obtained by extending $\gamma,\delta$ via Lemma \ref{lem:sym-mon-fun-via-Y}.
\item The family $\phi_X$ satisfies:
\begin{enumerate}
\item \textsl{(Naturality)} the diagram
$$
	\xymatrix{
	\gamma(X) \ar[rr]^{\phi_X} && \delta(X)
	\\
	\gamma(\one_{\mathcal{C}})  \ar[u]^{\gamma(f)} 
	& \one_{\mathcal{D}} \ar[r]_{\delta^0} \ar[l]^{\gamma^0} & 
	\delta(\one_{\mathcal{C}})\ar[u]_{\delta(f)} 
	}
$$
commutes for all $X \in \mathcal{C}$ and $f\colon \one \to X$.
\item \textsl{(Monoidality)} the diagram
$$
	\xymatrix{
	\gamma(X) \otimes_{\mathcal{D}} \gamma(Y) 
	\ar[rr]^{\gamma^2_{X,Y}} 
	\ar[d]_{\phi_X \otimes \phi_Y}
	&& \gamma(X \otimes_{\mathcal{C}} Y)
	\ar[d]^{\phi_{X \otimes Y}}
	\\
	\delta(X) \otimes_{\mathcal{D}} \delta(Y) 
	\ar[rr]_{\delta^2_{X,Y}} 
	&& \delta(X \otimes_{\mathcal{C}} Y)
	}
$$
commutes for all $X,Y \in \mathcal{C}$.
\end{enumerate}
\end{enumerate}
\end{lemma}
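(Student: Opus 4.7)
The approach is to exploit that once one has chosen dual data, the symmetric monoidal functors extending $\gamma$ and $\delta$ (via Lemma~\ref{lem:sym-mon-fun-via-Y}) are determined by their values on morphisms out of $\one_{\mathcal C}$. Consequently, to check that a family $(\phi_X)$ is a monoidal natural isomorphism between the extensions, it suffices to check its compatibility with this generating data, which is exactly what (a) and (b) record. The implication (i)$\Rightarrow$(ii) is then nearly tautological, while (ii)$\Rightarrow$(i) requires bookkeeping to propagate $\phi$ through the expression of $\gamma(f)$ for general $f$ given in the proof of Lemma~\ref{lem:sym-mon-fun-via-Y}.

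For (i)$\Rightarrow$(ii): condition (b) is part of the definition of a monoidal natural transformation. For (a), I would specialise naturality of $\phi$ to the given $f\colon\one\to X$ to get $\phi_X\circ\gamma(f)=\delta(f)\circ\phi_{\one}$, and then use the unit axiom of monoidality of $\phi$, which reads $\phi_{\one}\circ\gamma^0=\delta^0$, to close the triangle.

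For (ii)$\Rightarrow$(i): the nontrivial step is to establish naturality on a general morphism $f\colon X\to Y$. Recall from the proof sketch of Lemma~\ref{lem:sym-mon-fun-via-Y} that, setting $\tilde f=(f\otimes\id_{X^*})\circ\coev_X\colon\one\to Y\otimes X^*$, the extension is
\[
\gamma(f)=\big[\gamma(X)\xrightarrow{\sim}\gamma(\one)\gamma(X)\xrightarrow{\gamma(\tilde f)\otimes\id}\gamma(YX^*)\gamma(X)\xrightarrow{\sim}\gamma(Y)(\gamma(X^*)\gamma(X))\xrightarrow{\id\otimes d_X}\gamma(Y)\big],
\]
and identically for $\delta$ with $d_X^\delta$. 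The key auxiliary claim is the duality-intertwining identity
\[
d_X^\gamma=d_X^\delta\circ(\phi_{X^*}\otimes\phi_X).
\]
To prove it, I would apply (a) to the morphism $\coev_X\colon\one\to X\otimes X^*$ and combine with (b) on the pair $(X,X^*)$ to obtain $(\phi_X\otimes\phi_{X^*})\circ\mathrm{coev}_X^\gamma=\mathrm{coev}_X^\delta$, where $\mathrm{coev}_X^\gamma:=(\gamma^2_{X,X^*})^{-1}\circ\gamma(\coev_X)\circ\gamma^0$. The zig-zag relations of Lemma~\ref{lem:sym-mon-fun-via-Y}(ii)(a) force the dual pairing to $\mathrm{coev}_X^\delta$ to be unique; checking that $d_X^\delta\circ(\phi_{X^*}\otimes\phi_X)$ satisfies the zig-zag with $\mathrm{coev}_X^\gamma$ (using the just-derived relation) then yields the claim.

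With this in hand, naturality on the general $f$ is a routine diagram chase: move $\phi_X$ rightward through the expression for $\gamma(f)$, using (a) to convert $\gamma(\tilde f)\circ\gamma^0$ into $\delta(\tilde f)\circ\delta^0$ (after passing across $\phi_{Y\otimes X^*}$), (b) to swap each $\gamma^2$ for the corresponding $\delta^2$ (after passing across the appropriate tensor product of $\phi$'s), and the duality-intertwining identity to replace $d_X^\gamma$ by $d_X^\delta$. The result reassembles into $\delta(f)\circ\phi_X$. Monoidality of $\phi$ as a natural transformation is then immediate: (b) is the hexagon condition, while the unit axiom $\phi_{\one}\circ\gamma^0=\delta^0$ is obtained by specialising (a) to $X=\one$, $f=1_{\one}$, using $\gamma(1_{\one})=1_{\gamma(\one)}$ and $\delta(1_{\one})=1_{\delta(\one)}$ from Lemma~\ref{lem:sym-mon-fun-via-Y}(ii)(e).

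\textbf{Main obstacle.} The duality-intertwining identity and the subsequent diagram chase are the only substantive pieces; everything else is close to a definition unwind. The main hazard is purely bookkeeping — keeping track of the coherence isomorphisms $\gamma^2$, $\delta^2$, $\gamma^0$, $\delta^0$ and of the associators and unit isomorphisms of $\mathcal C$ and $\mathcal D$ while shuttling $\phi$ through the six-factor composite defining $\gamma(f)$. As in the proof of Lemma~\ref{lem:sym-mon-fun-via-Y}, I would abbreviate all such isomorphisms by $\stackrel{\sim}{\lra}$ once it is clear that coherence pins them down uniquely.
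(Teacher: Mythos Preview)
Your proposal is correct and follows essentially the same route as the paper: derive the coevaluation compatibility from (a)+(b), use uniqueness of duals to obtain the pairing identity $d_X^\gamma = d_X^\delta \circ (\phi_{X^*}\otimes\phi_X)$, then chase through the expression for $\gamma(f)$; monoidality from (b) and the unit axiom from (a) at $f=1_\one$. One point the paper makes explicit that you glossed over: when you check that $d_X^\delta\circ(\phi_{X^*}\otimes\phi_X)$ satisfies the zig-zag with $\coev_X^\gamma$, what you actually obtain directly is the zig-zag \emph{postcomposed with} $\phi_X$ (respectively $\phi_{X^*}$), and it is precisely here that you must invoke invertibility of $\phi_X$ to cancel it --- this is the only place in the proof where the hypothesis that the $\phi_X$ are isomorphisms is genuinely used, and it explains why the lemma is stated only for isomorphisms.
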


\begin{proof}[Sketch of proof.]
By Lemma \ref{lem:sym-mon-fun-via-Y} we may assume $\gamma$ and $\delta$ to be symmetric monoidal functors $\mathcal{C} \to \mathcal{D}$ for both directions of the proof. 

\medskip

\noindent
{}From (i) to (ii) is the easy direction: Condition a) is a combination of naturality and the unit condition of a natural monoidal transformation, and b) is just the same as for a natural monoidal transformation.

\medskip

\noindent
{}From (ii) to (i): Monoidality follows from b) together with a) evaluated on $X = \one_{\mathcal{C}}$ and $f=1_\one$.
It remains to verify that $\phi_X$ is natural for all morphisms $f\colon X \to Y$ in $\mathcal{C}$, not just those with $X = \one$, which is a little more tiresome. Here are the main steps:

\medskip

\noindent
\textsl{Step 1:} Check the identity
\begin{align*}
&\Big[\,
\one
\stackrel{\sim}{\lra}
\gamma(\one)
\xrightarrow{\gamma(\coev_X)}
\gamma(X X^*)
\stackrel{\sim}{\lra}
\gamma(X) \gamma(X^*)
\xrightarrow{\phi_{X} \otimes \phi_{X^*}}
\delta(X) \delta(X^*)
\,\Big]
\\
&=~
\Big[\,
\one
\stackrel{\sim}{\lra}
\delta(\one)
\xrightarrow{\delta(\coev_X)}
\delta(X X^*)
\stackrel{\sim}{\lra}
\delta(X) \delta(X^*)
\,\Big] \, .
\end{align*}
Note that the corresponding identity for $\ev_X$ does a priori not hold as condition~a) only imposes naturality for morphisms out of $\one_{\mathcal{C}}$, not into $\one_{\mathcal{C}}$.

\medskip

\noindent
\textsl{Step 2:} 
The Zorro moves 
in Lemma \ref{lem:sym-mon-fun-via-Y}\,(ii\,a) fix $d^\gamma_X$ uniquely. Define
$$
\tilde d^\gamma_X
~=~
\Big[\,
\gamma(X) \gamma(X^*)
\xrightarrow{\phi_{X} \otimes \phi_{X^*}}
\delta(X) \delta(X^*)
\xrightarrow{d^\delta_X}
\one
\,\Big] 
$$
and verify that the properties of $\gamma$ and the result of step 1 give the Zorro moves for $\tilde d^\gamma_X$, but postcomposed with $\phi_X$, respectively $\phi_{X^*}$. At this point in the proof we need to make use of invertibility of $\phi_X$ (and $\phi_{X^*}$) in order to obtain the Zorro moves for $\tilde d^\gamma_X$ and hence the identity
$$
	\tilde d^\gamma_X ~=~ d^\gamma_X \, .
$$

\smallskip

\noindent
\textsl{Step 3:} Recall the definition of $\gamma(f)\colon \gamma(X) \to \gamma(Y)$ for $f\colon X\to Y$ in terms of $\gamma(\tilde f)$ for $\tilde f\colon \one \to Y\otimes X^*$ from the proof of Lemma \ref{lem:sym-mon-fun-via-Y}.
Substituting this into the naturality square
$$
\xymatrix{
	\gamma(X) \ar[r]^{\gamma(f)} \ar[d]_{\phi_X} & \gamma(Y)\ar[d]^{\phi_Y}
	\\
	\delta(X) \ar[r]_{\delta(f)} & \delta(Y)
	}
$$
and using step 2, one checks that the square commutes.
\end{proof}

\end{document}